\theoremstyle{definition}
\newtheorem{definition}{Definition}[section]
\newtheorem{remark}[definition]{Remark} 
\newtheorem{defn}[definition]{Definition} 
\newtheorem{exam}[definition]{Example} 
\newtheorem*{question}{Question} 
\theoremstyle{plain}
\newtheorem{theorem}[definition]{Theorem}
\newtheorem{corollary}[definition]{Corollary}
\newtheorem{lemma}[definition]{Lemma}
\newtheorem{proposition}[definition]{Proposition}
\newenvironment{theorem*}[1]
{\innercustomthm}
{\endinnercustomthm}
\newenvironment{prop*}[1]
{\innercustomprop}
{\endinnercustomprop}
\newenvironment{corollary*}[1]
{\innercustomcor}
{\endinnercustomthm}
\newcommand{\Z}{\ensuremath{{\mathbb{Z}}}}
\newcommand{\R}{\ensuremath{{\mathbb{R}}}}
\newcommand{\C}{\ensuremath{{\mathbb{C}}}}
\newcommand{\N}{\ensuremath{{\mathbb{N}}}}
\newcommand{\G}{\ensuremath{{\Gamma}}}
\newcommand{\AG}{A_{\Gamma}}
\newcommand{\WG}{\ensuremath{{W_{\Gamma}}}}
\newcommand{\DG}{\mathcal{D}_\Gamma} 
\newcommand{\DGP}{\ensuremath{{\mathcal{D}_\Gamma^+}}}
\newcommand{\SF}{\ensuremath{{\mathcal{S}^f}} } 
\newcommand{\lcm}{\operatorname{lcm}}
\newcommand{\llm}{\operatorname{lcm}_L}
\newcommand{\calG}{\ensuremath{\mathcal{G}}}
\newcommand{\calM}{\ensuremath{\mathcal{M}}}
\newcommand{\sgeq}{\ensuremath{\succeq}}
\newcommand{\sleq}{\ensuremath{\preceq}}
\newcommand{\lkp}[1]{lk_{\mathcal{D}^+}(#1)}
\newcommand{\lk}[1]{lk_{\mathcal{D}}(#1)}
\tikzset{vertex/.style={circle, draw, fill=black!50},inner sep=0pt, minimum width=4pt}
\newcommand{\cst}[2]{[{#1}]_{#2}}
\newcommand{\mr}[2]{\overline{#1}_{#2}}
\newcommand{\End}[2]{\operatorname{end}_{#2}(#1)}
\title{A Deligne complex for Artin monoids}
\author{Rachael Boyd, Ruth Charney and Rose Morris-Wright}
\dedicatory{{Dedicated to the memory of Patrick Dehornoy.}}
\begin{document}

\subjclass[2010]{
	20F36 (primary), 
	20F55, 
	20M32, 
	20F65 (secondary). 
}
\keywords{Artin monoids, Artin groups, $K(\pi,1)$ conjecture.}

\thanks{Charney was partially supported by NSF grant DMS-1607616.}
\thanks{Boyd was partially supported by the London Mathematical Society Cecil King Scholarship.}

\begin{abstract}

In this paper we introduce and study some geometric objects associated to Artin monoids.  The Deligne complex for an Artin group is a cube complex that was introduced by the second author and Davis~\cite{CharneyDavis1995} to study the $K(\pi,1)$ conjecture for these groups.  Using a notion of Artin monoid cosets, we construct a version of the Deligne complex for Artin monoids.

We show that for any Artin monoid this cube complex is contractible. Furthermore, we study the embedding of the monoid Deligne complex into the Deligne complex for the corresponding Artin group. We show that for any Artin group this is a locally isometric embedding. In the case of FC-type Artin groups this result can be strengthened to a globally isometric embedding, and it follows that the monoid Deligne complex is CAT(0) and its image in the Deligne complex is convex.
We also consider the Cayley graph of an Artin group, and investigate properties of the subgraph spanned by elements of the Artin monoid. Our final results show that for a finite type Artin group, the monoid Cayley graph embeds isometrically, but not quasi-convexly, into the group Cayley graph. 

\end{abstract}

\maketitle

\section{Introduction} \label{Section: Introduction}

Artin groups, also known as Artin-Tits groups, are a broad class of groups whose presentations are encoded by labelled graphs. 
Given a simple graph $\G$ with a finite vertex set $S$ and edge set $E$, such that each edge $(s,t) \in E$ is labelled by an integer $m_{st}\geq 2$, we define the \textit{Artin group}, $\AG$, to be the group with presentation 

\[
\AG =\langle S \mid \underbrace{sts\ldots}_{\text{length }m_{st}}=\underbrace{tst\ldots}_{\text{length }m_{st}} \,\, \forall (s,t) \in E \rangle.
\] 
If there is no edge in $\G$ between $s$ and $t$ in~$S$, we say that $m_{st}=\infty$ and there is no relation between $s$ and $t$ in the presentation.

Artin groups are closely related to Coxeter groups. Given a graph $\G$ as above, the Coxeter group $\WG$ is the group whose presentation is the same as $\AG$ with the added relations $s^2=e$ for all $s\in S$, where $e$ is the identity element.  One important example of a Coxeter group is the symmetric group $S_n$. The symmetric group acts on $\R^n$ by permuting the coordinates.  Each transposition~$s_i$ exchanging the $i^{th}$ coordinate with the $i+1^{th}$ coordinate in $S_n$ is an involution, or reflection. The presentation of $S_n$ generated by~$S=\{s_1,\ldots, s_{n-1}\}$ is that of a Coxeter group. The corresponding Artin group is the braid group on $n$ strands. More generally, any Coxeter group can be realized as a discrete group generated by reflections on a finite dimensional vector space with respect to some inner product.  The Coxeter group is finite precisely when this inner product is positive definite.

The braid group is the prototypical example of a \textit{finite type} Artin group, an Artin group whose corresponding Coxeter group is a finite group.  The combinatorial structure of these groups was first studied by Garside \cite{Garside1969} who found a particularly nice solution to the word problem for these groups which has played a major role in the study of finite type Artin groups.  The notion of a \emph{Garside group} was later introduced by Dehornoy and Paris \cite{Dehornoy1999} to include other groups with a similar combinatorial structure.

However, the definition of Artin groups encompasses a class of groups much larger than only the finite type groups, and as a whole, this class is very poorly understood.  Particular types of Artin groups are well studied, including finite type, right-angled and FC-type Artin groups. However for general Artin groups, many basic questions remain unanswered.  For example, it is unknown whether they are torsion-free and whether they have solvable word problem, as well as many other properties.

Questions that seem intractable for Artin groups are often easier to solve in the monoid case. 
Given a labelled graph~$\G$ as above, the \emph{Artin monoid}~$\AG^+$ is defined to be the monoid given by the positive presentation of the Artin group: i.e.~elements in~$\AG^+$ are represented by words which use only positive powers of the generating set~$S$.
\[
A_\G^+ =\langle S \mid \underbrace{sts\ldots}_{\text{length }m_{st}}=\underbrace{tst\ldots}_{\text{length }m_{st}} \,\,  \forall (s,t) \in E \rangle^+.
\]
Some properties of the monoid are immediate, and solve questions that are still unknown for the group. For example, because all of the relations in the above presentation preserve the length of a word in terms of the standard generating set, there is a well defined length function for any given monoid element. Thus the word problem in the monoid is easily solved. 

The monoid also has the structure of a partially ordered set, with a relation determined by a prefix (or suffix) order on the elements. This poset is especially useful  in the finite type case, when it turns out the poset is a lattice, yet it can also be useful in the more general case. One goal of this work is to use the monoid and this poset to study the group in as much generality as possible.

One challenge in using the Artin monoid to study the Artin group is that very little is known about the relationship between the two. In \cite{Paris2002}, Paris shows that the natural map $\AG^+\to \AG$ is an injection, and even this result is highly non-trivial. 
In the current paper, we study geometric relationships between monoids and groups with the hope of building new tools to study both the monoid and the group.

\subsection{The monoid Deligne complex}

To date, the most effective approaches to studying infinite type (non-finite type) Artin groups have been geometric.  In particular, CAT(0) cube complexes have been a primary tool in the study of  right-angled Artin groups and, more generally, FC-type Artin groups.



The subgroup $A_T$ generated by a subset $T \subseteq S$ is called a special subgroup of $\AG$.  By a theorem of van der Lek \cite{VanderLek1983}, $A_T$ is isomorphic to the Artin groups associated to the (full) subgraph of $\G$ spanned by $T$.  An Artin group is called \textit{FC-type} if any subset $T$ that spans a clique in $\G$, generates a finite type Artin group.   FC-type Artin groups were originally defined by the second author and Davis in \cite{CharneyDavis1995}.  Building on work of Deligne, they define a cube complex whose vertices are given by cosets of finite type special subgroups of  $A_\G$.  Charney and Davis call this complex the modified Deligne complex, and it has since become known as the Deligne complex.

\begin{definition}\label{def:delignecomplex}
	Let $A_\G$ be an Artin group. The \textit{Deligne complex}~$\DG$ is the cube complex with vertex set all cosets $gA_T$, such that $A_T$ is finite type.  We partially order the vertices by inclusion of cosets. For any pair of vertices $gA_T \subset gA_{T'}$, the interval $[gA_T, gA_{T'}]$ spans a cube of dimension $|T' \smallsetminus T|$. 
\end{definition}

The action of $\AG$ on its cosets induces a cocompact action by isometries of $\AG$ on the Deligne complex~$\DG$. Note, however, that this action is not proper as the stabilizer of a vertex $gA_T$ is the subgroup $gA_Tg^{-1}$.

FC-type Artin groups are precisely those for which the standard cubical metric on this complex is CAT(0).  The ``FC" stands for Flag Complex, which comes from the flag condition required to show that a cube complex is CAT(0).  The Deligne complex has been used to show that FC-type Artin groups have many desirable properties. It was originally introduced to prove the $K(\pi,1)$  conjecture, which we discuss below.  In addition, it has been used to show that FC-type Artin groups have solvable word problem, are torsion-free and have finite virtual cohomological dimension, among other properties \cite{Altobelli1998, CharneyDavis1995, Godelle2007}.



In this paper, we use the Deligne complex to study the relation between the Artin monoid and the Artin group. The Deligne complex~$\DG$ is built from cosets of finite type special subgroups of $\AG$. In the first author's work \cite{Boyd2020}, an analogue to these cosets for the Artin monoid is defined and studied, in the setting of homological stability. In previous work of the second author \cite{Charney1999} similar cosets were used to define a `positive Deligne complex' to prove injectivity of the Artin monoid for 2-dimensional Artin groups. This has inspired the current work, in which we imitate the construction of the Deligne complex for the Artin monoid to produce a cube complex, $\DGP$, which we call the \emph{monoid Deligne complex}. We study geometric properties of~$\DGP$ and its relation with~$\DG$.

First, we show that for any Artin monoid $\AG^+$, the complex $\DGP$ is contractible. The analogous result for $\DG$ is known only for certain restricted classes of Artin groups.  (Indeed, this is one of the major open problems for a general Artin group.)

\begin{theorem*}{\ref{thm:contactible}}
	Let~$A_\G^+$ be an arbitrary Artin monoid. Then the cube complex~$\DGP$ is contractible.
\end{theorem*}

We then compare the geometry of  this new complex $\DGP$ to the full Deligne complex $\DG$.  Our main results are the following.

\begin{theorem*}{\ref{Thrm:Local Convexity}}  Let $\AG$ be any Artin group.  Then $\DGP$ embeds as a subcomplex of $\DG$ and the inclusion map $\iota: \DGP \to \DG$ is a locally isometric embedding.
\end{theorem*} 

This theorem applies to all Artin groups, but has important consequences when restricted to the FC-type case.

\begin{corollary*}{\ref{Cor:DGPisCAT(0)}}
	If $\AG$ is an FC-type Artin group, then the inclusion map $\iota: \DGP \to \DG$ is an isometric embedding, hence $\DGP$ is CAT(0) and its image is convex in $\DG$.
\end{corollary*}

\subsection{The monoid Cayley graph}

The Deligne complex considers group elements `up to finite type cosets', which in many cases reduces problems to the well-studied finite type case. We can however, consider the whole set of group elements by studying the Cayley graph of the group.
We consider the Cayley graph of a given Artin group and study the properties of the subgraph spanned by elements in the monoid, which we call the monoid Cayley graph.  

Identifying elements of the group and monoid with vertices in the Cayley graph and monoid Cayley graph, we get a metric on $\AG$ and $\AG^+$ given by minimal path lengths in the corresponding graph. Using the Cayley graph of $\AG$ defined with respect to a particular finite generating set called $\calM$, the set of minimal elements, we show the following.

\begin{prop*}{\ref{Prop:IsoEmdbedding}}
	Suppose~$\AG$ is a finite type Artin group, then with respect to the generating set $\calM$, the associated monoid $\AG^+$ embeds isometrically in the group $\AG$.
\end{prop*}

On the other hand, we give an example showing that this inclusion is not in general convex or even quasi-convex.  That is, geodesics in the Cayley graph of $\AG$ connecting two monoid vertices, need not stay uniformly bounded distance from the monoid subgraph.

\subsection{Motivation: the~$K(\pi,1)$ conjecture}

Much of this paper focuses on establishing a geometric relationship between an Artin group and its corresponding monoid, but Artin monoids and their geometry are also interesting to study in their own right.

Much of the early work on Artin groups focussed on solving a conjecture formulated in its current form by Arnol'd, Brieskorn, Pham and Thom. First we consider the case of finite type Artin groups. In this case, given a defining graph~$\G$, one can associate to the (complexified) action of the Coxeter group $W_\G$ on $\C^n$ a hyperplane complement obtained by removing all of the hyperplanes fixed by some reflection.  We will denote this hyperplane complement by~$\mathcal{H}_\G$. The  group~$W_\G$ acts freely on~$\mathcal{H}_\G$ and the corresponding quotient $\mathcal{H}_\G/W_\G$ has as its fundamental group the Artin group~$A_\G$.  In fact, this was the original motivation for the definition of Artin groups by Brieskorn~\cite{Brieskorn1971}.  
For example, in the case that $W_\G$ is the symmetric group on $n$ letters, $\mathcal{H}_\G/ W_\G$ is the configuration space of $n$ (unordered) distinct points in the complex plane, and the braid group can be naturally identified with the fundamental group of this space.  

In work of Deligne~\cite{Deligne1972}, the universal cover of $\mathcal{H}_\G$ (and hence also of $\mathcal{H}_\G/W_\G$) is shown to be contractible and it follows that~$\mathcal{H}_\G/W_\G$ is an Eilenberg-Maclane space for~$A_\G$, otherwise known as a~$K(A_\G,1)$ space or classifying space~$BA_\G$. His proof, however, applies only to the finite type case.
For infinite type Artin groups, there is an analogue of the hyperplane complement formulated by Vinberg \cite{Vinberg1971} by restricting to an open cone in $\C^n$.  (For a more detailed description see Davis \cite{Davis2008}, notes by Paris \cite{Paris2014} and the introduction of \cite{Charney2007}.)  We again denote this hyperplane complement  by~$\mathcal{H}_\G$.   Van der Lek \cite{VanderLek1983} showed  that the fundamental group of $\mathcal{H}_\G/W_\G$ is isomorphic to the Artin group $\AG$ for any $G$.  The \emph{$K(\pi,1)$ conjecture} states that an analogue of Deligne's theorem holds for all Artin groups, that is, the universal cover of $\mathcal{H}_\G$ is contractible.    This conjecture is open in general, but is known to hold for many classes of Artin groups, including finite type~\cite{Deligne1972}, FC-type~\cite{CharneyDavis1995}, affine type~\cite{PaoliniSalvetti2018}, and 2-dimensional Artin groups \cite{Hendriks1985}

The~$K(\pi,1)$ conjecture has been rephrased in many ways. Charney and Davis~\cite{CharneyDavis1995} showed that the Deligne complex of Definition~\ref{def:delignecomplex} is homotopy equivalent to the universal cover of $\mathcal{H}_\G$ for any Artin group and thus contractibility of the Deligne complex would prove the~$K(\pi,1)$ conjecture.  Most of the cases for which the conjecture is known were proved in this way.  

There are also several other approaches.  Notably, Salvetti constructed a finite dimensional CW complex~$\operatorname{Sal}_\G$ homotopy equivalent to~$\mathcal{H}_\G$~\cite{Salvetti1994} (see also \cite{CharneyDavis1995a}),  so  proving that  this complex is aspherical would also prove the~$K(\pi,1)$ conjecture. 

In 2006 Dobrinskaya proved that the quotient $\mathcal{H}_\G/W_\G$ has the same homotopy type as~$BA_\G^+$, the classifying space of the Artin monoid~\cite{Dobrinskaya2006}. This was later reproven by Ozornova \cite{Ozornova2017} and Paolini \cite{Paolini2017}. Recall that, contrary to classifying spaces of groups, the classifying space of a monoid can exhibit any connected homotopy type~\cite{McDuff1979}. It follows that the~$K(\pi,1)$ conjecture is true for $\AG$, if and only if the natural map~$BA_\G^+\to BA_\G$ is a homotopy equivalence. We give a diagrammatic depiction of some of the known~$K(\pi,1)$ conjecture equivalences in the digram below. Finding a proof which confirms any question mark shown in the diagram would in turn prove the~$K(\pi,1)$ conjecture. Some of our results (Theorem~\ref{thm:contactible} and part of Theorem~\ref{Thrm:Local Convexity}) on the monoid Deligne complex are shown in red.
\[
\xymatrix@C=7em{
	& {\color{red} \hspace{1cm} \DGP\simeq \ast \ar@[red]@{^{(}->}[d]}&\\
	&\widetilde{\mathcal{H}_\G/W_\G}\simeq \DG \overset{\text{\large{?}}}{\simeq}\ast\qquad \,\, \ar[d]\\
	\operatorname{Sal}_\G/W_\G\ar[r]^{\simeq}&\mathcal{H}_\G/W_\G & BA_\G\ar[l]^{K(\pi,1) \text { conj.}}_{\simeq \text{\large{?}}}\\
	&BA^+_\G \ar[u]_{\simeq} \ar@{.>}[ru]_{\simeq \text{\large{?}}}\\
}
\]

The initial definition of Artin groups arose from their relation to hyperplane complements: the group encodes some of the homotopy information about the hyperplane complement, namely the fundamental group. The~$K(\pi,1)$ conjecture then implies that the Artin group in fact encodes \emph{all} of the homotopy information, and this has been shown to be true in many cases. However, from Dobrinskaya's work, we know that the Artin monoid  encodes all of the homotopy information for any $\G$.  This has motivated our study of these monoids and the related construction of a monoid Deligne complex.

\subsection{Discussion and further questions}

We believe that this work opens the door to many further questions on the geometry of Artin monoids and the relationship with their corresponding Artin group. Readers who have experience with Artin groups or CAT(0) geometry will no doubt be able to think of further questions in this direction, we will discuss a few below that seem natural to us.

There are a number of other geometric structures that have been used to study different classes of Artin groups.  For example, in \cite{CharneyMorrisWright2019}, the second and third authors use another cube complex, called the clique cube complex, to show that most irreducible infinite type Artin groups have trivial center and are acylindrically hyperbolic.   The clique cube complex is defined using cosets of special subgroups corresponding to cliques in the defining graph. Unlike the Deligne complex, the clique cube complex is CAT(0) for all Artin groups.  This complex has also been used to show that many questions about general Artin groups can be reduced to the case of Artin groups whose defining graph is a single clique \cite{GodelleParis2012a, CharneyMorrisWright2019}. One can define a \emph{monoid clique cube complex}, in the same vein as the monoid Deligne complex, using the monoid cosets of special subgroups corresponding to cliques. 
This inspires the next question, which we leave broad.

\begin{question}
	Which of our results for the monoid Deligne complex hold true for the monoid clique cube complex?
\end{question}

 Our results are particularly strong in the FC-type case because in that case, the cubical metric on the Deligne complex is CAT(0).  There is another metric on the Deligne complex, known as the Moussong metric, that is conjectured to be CAT(0) for all Artin groups.  This is known to be the case when $\AG$ is 2-dimensional~\cite{CharneyDavis1995}, and more generally when it is locally reducible (see \cite{Charney2000}). The second author shows in \cite{Charney1999}  that for 2-dimensional Artin groups,~$\DGP\hookrightarrow\DG$ is an isometric embedding.  This result can been seen to extend to locally reducible Artin groups using results on spherical joins from the Appendix of \cite{CharneyDavis93}. The following question is a natural generalization of these results.

\begin{question}  In general, is the embedding $\DG^+ \hookrightarrow \DG$ locally isometric with respect to the Moussong metric?
\end{question}

Considering our results on the monoid Cayley graph, there are many potential strengthenings one might desire.  The following question seems reasonable to us, 
in light of the fact that in the FC-type case, we have shown that~$\DGP$ embeds isometrically into~$\DG$.

\begin{question}
	In the FC-type case, does the monoid Cayley graph isometrically embed into the Cayley graph of the full group with respect to either the standard generating set $S$ or the generating set of minimal elements $\calM$?
\end{question}

We finish with a very ambitious question related to our main motivating problem, the~$K(\pi,1)$ conjecture. One can take translates of the (contractible) subcomplex~$\DGP$ in~$\DG$, given by the action of the Artin group on~$\DG$. Taking `enough' of these translates will cover~$\DG$.

\begin{question}
	Can one use the covering of~$\DG$ by translates of~$\DGP$ to prove contractibility of~$\DG$, and hence prove the~$K(\pi,1)$ conjecture, for some new classes of Artin groups?
\end{question}

One might begin by considering this question in the FC-type case -- where the contractibility of~$\DG$ is known -- as this may provide further insight into the question for more general~$\DG$.

\subsection{Outline}
In \Cref {Section:Background} we review background material on Artin groups and their monoids as well as the geometry of CAT(0) cube complexes. In \Cref{Section:Definition}, we define the monoid analogue of cosets, as originally described in \cite{Boyd2020}, and use these cosets to define a monoid version of the Deligne complex, denoted $\DGP$. In \Cref{Section:Contractibility}, we prove that the monoid Deligne complex~$\DGP$ is always contractible.  In \Cref{Section:Convexity}, we investigate the geometric properties of the embedding of $\DGP$ into the Deligne complex $\DG$.   In \Cref{Section:CayleyGraph}, we consider the Cayley graph of an Artin group and the subgraph spanned by monoid elements. Specifically we consider questions about the convexity of this subgraph. 

\subsection{Acknowledgements} 
The first author would like to thank the Max Plank Institute for Mathematics in Bonn for its support and hospitality.

This project began during a two-week summer school at the Institut des Hautes \'Etudes Scientifiques.  All three authors would like to thank IHES for their hospitality.

\section{Background} \label{Section:Background}

				In this section we collect basic facts and lemmas that we require in the rest of the paper.
				
				\subsection{Artin groups and monoids} \label{Subsection:ArtinMonoids}
				In this section we recall some basic definitions and properties of the groups and monoids we work with. General references for readers are Paris~\cite{Paris2014}, \cite{Michel1999} and Brieskorn and Saito \cite{BrieskornSaito1972} (an English translation of this paper also exists \cite{BrieskornSaitoTranslation}).
				
				Consider a finite set~$S$ and a finite simple graph~$\G$ with vertex set~$S$ and edge set, $E$, where each edge $(s,t)\in E$ is labelled by an integer greater $m_{st}\geq 2$. If there is no edge between $s$ and $t$ we set $m_{st}=\infty$.
				
				\begin{defn}
					Given a finite generating set~$S$ and corresponding graph~$\G$ as above, we define the \emph{Artin group}~$A_\G$ to be the group with presentation
					\[
					A_\G =\langle S \mid \underbrace{sts\ldots}_{\text{length }m_{st}}=\underbrace{tst\ldots}_{\text{length }m_{st}} \,\, \forall (s,t) \in E \rangle.
					\]
				\end{defn}
				
				For each graph~$\G$ there also exists a corresponding Coxeter group~$W_\G$, given by adding the relations~$s^2=e$ for all $s\in S$ to the presentation for~$A_\G$. The finite Coxeter groups were classified by Coxeter~\cite{Coxeter1933}, and if~$W_\G$ is finite, we say that~$A_\G$ is a \emph{finite type} Artin group (note since all generators have infinite order an Artin group is never itself finite).
				
				\begin{defn}
					Given a finite generating set~$S$ and corresponding graph~$\G$, the Artin monoid~$\AG^+$ is defined to be the monoid given by the positive presentation of the Artin group: i.e.~elements in~$\AG^+$ are represented by words which use only positive powers of the generating set~$S$.
					\[
					A_\G^+ =\langle S \mid \underbrace{sts\ldots}_{\text{length }m_{st}}=\underbrace{tst\ldots}_{\text{length }m_{st}} \,\, \forall (s,t) \in E\rangle^+.
					\]
				\end{defn} 
				
				Note here that~$A_\G$ is the group completion of~$A_\G^+$. Artin monoids appear in much of the work on Artin groups. For instance in the seminal work of Deligne~\cite{Deligne1972} and Brieskorn and Saito~\cite{BrieskornSaito1972} properties of Artin monoids, and the relationship between the monoids and the groups, play a huge role. In particular, a key property of finite type Artin groups is that every element~$w$ can be written as~$w=ab^{-1}$ for~$a$ and~$b$ in the corresponding Artin monoid~\cite{Garside1969, Deligne1972}.
				
				\begin{defn}
					Given a subset~$T\subseteq S$, the full subgraph of~$\G$ spanned by the vertex  subset~$T$ defines an Artin group in its own right, which is a subgroup of~$A_\G$. We denote this subgroup~$A_T$, and call such subgroups~\emph{special subgroups}. If a special subgroup~$A_T$ is a finite type Artin group, we call it a \emph{spherical}  or \textit{finite type} subgroup. In this setting it is useful to keep track of which subsets~$T\subseteq S$ give rise to spherical subgroups. We define
					\[
					\mathcal{S}^f= \{ T \subseteq S \mid \textrm{$A_T$  is finite type} \}.
					\]
					We similarly define~$A_T^+$ to be the submonoid of~$A^+_\G$ corresponding to the subgraph of~$\Gamma$ spanned by~$T$.
				\end{defn}

				Many of the technical lemmas in this work involve manipulation of words or elements in a given Artin group or monoid. The following definitions and lemmas provide us with a tool-kit with which to compare or manipulate these elements.
				
				\begin{defn}\label{def:length}
					Let~$\alpha$ be an element in~$A_\G^+$. We define the length of~$\alpha$ with respect to the standard generating set $S$ by
					\[
					l(\alpha)=k \text{ if $\alpha$ can be written as the word }\alpha=s_1\ldots s_k \text{ for } s_i \in S.
					\]
				\end{defn}
				The length function~$l: A_\G^+\to \N$ is a well-defined monoid homomorphism. It is independent of the word chosen to represent the element~$\alpha$, since the relations in the Artin monoid equate words of the same length, and it is a homomorphism since multiplication in the monoid corresponds to addition of lengths.

				\begin{lemma}[{\cite[Proposition 2.3]{BrieskornSaito1972}}\cite{Michel1999}]
					Artin monoids satisfy {right} cancellation: that is if~$a,b,c \in A^+_\G$ satisfy $ac=bc$ then it follows that~$a=b$. {Likewise for left cancellation.}
				\end{lemma}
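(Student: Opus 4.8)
The plan is to deduce the general cancellation law from its single-generator special case together with a least-common-multiple property, the two being established by one interlocked induction on word length, as in Brieskorn--Saito. First I would carry out two reductions. The two halves of the statement are interchanged by the word-reversal map $\mathrm{rev}\colon s_1\cdots s_k \mapsto s_k\cdots s_1$: since reversing both sides of a defining relation $\underbrace{sts\cdots}_{m_{st}}=\underbrace{tst\cdots}_{m_{st}}$ yields another defining relation, $\mathrm{rev}$ is a well-defined anti-automorphism of $A^+_\G$, and it turns the instance $ac=bc$ into $\bar c\,\bar a=\bar c\,\bar b$; hence right cancellation for all elements is equivalent to left cancellation for all elements, and I may treat only the left case. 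Next, writing the cancelled element as a word $c=s_1\cdots s_k$ and peeling off $s_1,s_2,\dots$ from the left one at a time, full left cancellation ($ca=cb\Rightarrow a=b$) follows from the single-generator case $sa=sb\Rightarrow a=b$ with $s\in S$. This single-generator statement is the real content.

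I would then prove, by simultaneous induction on length $n$, the conjunction of two assertions for elements of length at most $n$: (C$_n$) single-generator cancellation, $sa=sb\Rightarrow a=b$; and (L$_n$) the least-common-multiple property for pairs of generators, namely that if $s\ne t$ both left-divide an element $a$ (so $a=sa'=ta''$ for some $a',a''$), then necessarily $m_{st}<\infty$ and the element $\Delta_{st}:=\underbrace{sts\cdots}_{m_{st}}$ left-divides $a$. The length function of Definition~\ref{def:length} makes the induction well-founded, since $sa=sb$ forces $l(a)=l(b)$, and the base cases are immediate. The reversal symmetry lets me invoke the right-handed versions of (C) and (L) at smaller lengths as well. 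The essential reason for proving (C) and (L) together is that each inductive step feeds the other: the divisibility statement is precisely the tool that controls cancellation, while cancellation at smaller lengths is what produces the divisor in (L).

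For the inductive step of (C$_n$), I would use that $sa$ and $sb$, being equal in the monoid, are joined by a finite chain of elementary transformations (applications of the defining relations). If no transformation in the chain ever touches the leading letter $s$, one simply strips it off and reads $a=b$ from the induced chain between the tails. The hard part will be the case where a transformation rewrites a prefix $\underbrace{sts\cdots}_{m_{st}}$ that involves the leading $s$, thereby replacing the initial generator by $t$: at that moment both $s$ and $t$ left-divide the current element, so (L$_n$) applies and forces $\Delta_{st}$ to be a common left divisor, and factoring $\Delta_{st}$ out reduces the equation to strictly shorter elements, where (C$_{n-1}$) finishes the argument.

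Establishing (L$_n$) itself is the other delicate point: given $sa'=ta''$ with $a',a''$ of length $n-1$, one must again analyse the connecting chain of transformations and repeatedly apply (C$_{n-1}$) and (L$_{n-1}$) to the shorter factors in order to manufacture the divisor $\Delta_{st}$, the case $m_{st}=\infty$ being excluded because then $s$ and $t$ admit no common multiple at all. I expect this mutual dependence of cancellativity and lcm-existence, handled by the single interlocked induction, to be where essentially all the work lies; once it is in place, the lemma as stated is an immediate consequence.
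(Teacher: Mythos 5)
The paper offers no proof of this lemma: it is quoted directly from Brieskorn--Saito (and Michel), so the relevant comparison is with the cited source. Your outline is essentially a reconstruction of the Brieskorn--Saito argument: the word-reversal anti-automorphism reducing right cancellation to left cancellation, the peeling-off reduction to cancellation of a single letter, and the interlocked induction proving single-letter cancellation (C) together with the statement (L) that a common left multiple of two distinct letters $s,t$ is a left multiple of $\Delta_{st}$ (and that $m_{st}<\infty$ is forced) is exactly their scheme, down to the case analysis of chains of elementary transformations. All of these reductions are correct as stated.

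There is, however, one concrete circularity in the inductive step as you have set it up. In the step for (C$_n$) you consider $sa=sb$, locate the first moment in the chain where a relation touches the leading letter (turning it into $t$), invoke (L$_n$) to conclude $sa=\Delta_{st}w$, and then assert that ``factoring $\Delta_{st}$ out reduces the equation to strictly shorter elements, where (C$_{n-1}$) finishes.'' But $\Delta_{st}$ begins with the letter $s$ in one of its two spellings, say $\Delta_{st}=s\delta$, so passing from $sa=s(\delta w)$ to $a=\delta w$ is itself an instance of single-letter cancellation for an element of the \emph{same} length $n$ --- precisely the statement (C$_n$) under proof. Induction on word length alone therefore does not close the argument. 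The standard repair, and what Brieskorn--Saito actually do, is a secondary induction on the number of elementary transformations in the chain joining the two words: one splits the chain at the first transformation involving the leading letter and applies the inductive hypothesis (same length, strictly shorter chain) to the resulting subchains. You gesture at this when you speak of analysing the connecting chain in the proof of (L$_n$), but the inductive framework you declare --- on length only, with (C$_{n-1}$) and (L$_{n-1}$) as hypotheses --- cannot support the final cancellation step; promoting the chain length to an explicit induction parameter fixes the gap, and with that change your outline is the proof of the cited source.
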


				There are two partial orderings on the Artin monoid that will play a key role in the forthcoming arguments.
				
				\begin{defn}\label{def:partialorderings}
					For~$a,b \in A_\G^+$, let~$\sleq_L$ denote the partial ordering on~$\AG^+$ defined by~$a \sleq_L b$ if~$b=ac$ for some~$c \in \AG^+$. In this case we say~$a$ is a \emph{left divisor} of~$b$, and~$b$ is a \emph{left multiple} of~$a$.  Similarly, let~$\sgeq_R$ denote the partial ordering on~$\AG^+$ defined by~$b \sgeq_R a$ if~$b=ca$ for some~$c \in \AG^+$. In this case we say~$a$ is a \emph{right divisor} of~$b$, and~$b$ is a \emph{right multiple} of~$a$.
				\end{defn}
				
				{Note that in the definitions above, `left' and `right'  refers to the choice of ordering, $\sleq_L$ or $\sgeq_R$.  We caution that some authors use these terms differently.}
				
				\begin{defn}\label{def:cms-and-gcms}
					Given a subset~$X$ of an Artin monoid~$A_\G^+$, we say that $b\in A_\G^+$ is a \emph{common left multiple} for~$X$  if~$x\sleq_L b$ for all~$x\in X$.     We say that $a\in A_\G^+$ is a \emph{common left divisor} for~$X$  if~$a\sleq_L x$ for all~$x\in X$.  Similarly for common right multiples and divisors.
				\end{defn}
				
				\begin{lemma}[{\cite[Proposition~4.1]{BrieskornSaito1972}}]
					If a finite subset~$X$ of an Artin monoid has a common left/right multiple, then it has a (unique) least common left/right multiple, which we denote by $\lcm_L(X)$, or $\lcm_R(X)$.
				\end{lemma}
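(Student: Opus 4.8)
\section*{Proof proposal}

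The plan is to prove the statement for common \emph{left} multiples in the strong form ``if $\{x\in X\}$ has a common left multiple then there is a $\lcm_L(X)$ which left-divides \emph{every} common left multiple''; the right-hand statement then follows by symmetry. Indeed, the word-reversal map $s_1\cdots s_k\mapsto s_k\cdots s_1$ sends each defining relation $\underbrace{sts\cdots}_{m_{st}}=\underbrace{tst\cdots}_{m_{st}}$ to itself as an unordered relation (one checks that reversing the two alternating words of length $m_{st}$ returns the same pair of words), so it descends to a well-defined anti-automorphism of $\AG^+$ that interchanges $\sleq_L$ with $\sgeq_R$. This converts every statement about left multiples into the corresponding statement about right multiples, so it suffices to treat $\sleq_L$.

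Next I would reduce a finite set $X$ to the case $|X|=2$ by induction on $|X|$. Write $X=\{x_1\}\cup X'$ and suppose $X$ has a common left multiple $c$. Then $c$ is a common left multiple of $X'$, so by induction $\ell':=\lcm_L(X')$ exists and left-divides every common left multiple of $X'$; in particular $\ell'\sleq_L c$, and since also $x_1\sleq_L c$ the pair $\{x_1,\ell'\}$ has the common left multiple $c$. Granting the strong two-element case, $\lcm_L(x_1,\ell')$ exists. It is a common left multiple of all of $X$ (as $x_i\sleq_L\ell'\sleq_L\lcm_L(x_1,\ell')$ for $x_i\in X'$), and for any common left multiple $d$ of $X$ we have $x_1\sleq_L d$ and $\ell'\sleq_L d$, whence $\lcm_L(x_1,\ell')\sleq_L d$; thus $\lcm_L(x_1,\ell')=\lcm_L(X)$.

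It remains to establish the two-element case, the heart of the matter. Fix $a,b$ with a common left multiple and let $C$ be the set of all common left multiples. The length homomorphism $l\colon\AG^+\to\N$ of Definition~\ref{def:length} shows that each fixed $d\in C$ has only finitely many left divisors (a left divisor of $d$ has length at most $l(d)$, and as $S$ is finite there are finitely many elements of each length), so $C$ has an element $m$ of minimal length. The \textbf{core claim} is that $m\sleq_L d$ for every $d\in C$. Uniqueness of such an $m$ is then immediate: if $m,m'\in C$ both left-divide all of $C$ then $m\sleq_L m'$ and $m'\sleq_L m$, so by additivity of $l$ and cancellation $m=m'$, and we set $m=\lcm_L(a,b)$.

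The core claim is the main obstacle, and it is exactly where the combinatorics of the Artin relations enters. I would prove it by a Newman-type confluence argument: the length function guarantees termination (there is no infinite strictly descending chain for $\sleq_L$), so it suffices to verify \emph{local} confluence, which reduces to understanding common left multiples of two generators $s,t$. When $m_{st}<\infty$ these are governed by the element $\D_{st}=\underbrace{sts\cdots}_{m_{st}}=\underbrace{tst\cdots}_{m_{st}}$, which is the least common left multiple of $s$ and $t$, while for $m_{st}=\infty$ the generators $s,t$ have no common left multiple at all. Propagating this base case upward by induction on length -- showing that two incomparable left divisors of a common multiple $d$ can always be ``reduced'' through a single relation without leaving the finite divisor poset of $d$, so that they admit a common left multiple below $d$ -- forces $C$ to have a unique minimal element. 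The genuine work, as in \cite[\S4]{BrieskornSaito1972}, lies in this local-confluence step; right and left cancellation are precisely what make the reductions unambiguous, and the length function is what makes them terminate.
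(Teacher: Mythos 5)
The paper does not prove this lemma at all: it is imported verbatim from Brieskorn--Saito \cite[Proposition~4.1]{BrieskornSaito1972}, so the only question is whether your blind attempt actually constitutes a proof. Your outer scaffolding is correct and cleanly done: the reversal anti-automorphism (checked correctly on the alternating relations) does reduce right to left; the induction from finite $X$ to pairs is valid, and you correctly note it needs the strong form in which $\lcm_L$ left-divides \emph{every} common left multiple; a minimal-length element $m$ of the set $C$ of common left multiples exists by the length homomorphism; and uniqueness follows from additivity of $l$ plus cancellation, as you say.

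The genuine gap is the core claim, which is precisely the content of Proposition~4.1 and which your proposal asserts rather than proves. Two concrete problems. First, the Newman-type argument is invoked without specifying a rewriting system, and the asserted reduction of local confluence to pairs of generators is unjustified: what is actually needed is the statement that if $s,t \sleq_L d$ for generators $s,t$, then $m_{st}<\infty$ and $\Delta_{st} \sleq_L d$ (note that even the $m_{st}=\infty$ half, i.e.\ that such $s,t$ admit no common left multiple, is nontrivial -- the paper cites Brieskorn--Saito for the analogous fact about $S$), together with a propagation from generators to arbitrary incomparable divisors of $d$. That propagation is not a formal consequence of Newman's lemma; in the word-reversing formulation it requires verifying a compatibility (``cube'') condition on \emph{triples} of generators, and in Brieskorn--Saito's original treatment it is a delicate double induction on length and on chains of elementary relations. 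Second, your text concedes exactly this -- ``the genuine work, as in \cite[\S4]{BrieskornSaito1972}, lies in this local-confluence step'' -- so the hard combinatorial heart is cited back to the very result being proved. As it stands, your proposal is a correct and well-organized reduction of the lemma to its irreducible core, but it fails as a standalone proof precisely where the cited proposition does its work; to complete it you would need to state and prove the generator-level lemma and the inductive propagation step in full.
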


				\begin{lemma}[{\cite[Proposition~4.2]{BrieskornSaito1972}}]   Any finite subset $X$ of an Artin monoid has a (unique) greatest common left/right divisor, which we denote by $\gcd_L(X)$, or $\gcd_R(X)$.
				\end{lemma}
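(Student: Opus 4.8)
The plan is to prove the statement for greatest common \emph{left} divisors; the right case follows by the symmetric argument, replacing $\sleq_L$ and $\lcm_L$ throughout by $\sgeq_R$ and $\lcm_R$. We may assume $X \neq \emptyset$. Write $D$ for the set of all common left divisors of $X$, that is, $D = \{ d \in \AG^+ \mid d \sleq_L x \text{ for all } x \in X \}$. First I would record two elementary facts. The set $D$ is nonempty, since $e \sleq_L x$ for every $x$ (as $x = e \cdot x$). It is also finite: fixing any $x_0 \in X$, each $d \in D$ satisfies $d \sleq_L x_0$ and hence $l(d) \le l(x_0)$, and since $S$ is finite there are only finitely many elements of $\AG^+$ of length at most $l(x_0)$.

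The key step is to realise the greatest element of $D$ as a least common multiple. The crucial observation is that every $x \in X$ is a common left multiple of the \emph{whole} set $D$: by definition, each $d \in D$ divides each $x$. Hence $D$ is a finite subset of $\AG^+$ possessing a common left multiple, so by the preceding lemma its least common left multiple $m := \lcm_L(D)$ exists. I claim $m$ is the required greatest common left divisor. On one hand $m$ lies in $D$: since each $x \in X$ is a common left multiple of $D$ and $m$ is the least such, $m \sleq_L x$ for all $x \in X$, so $m$ is itself a common left divisor. On the other hand, by the defining property of $\lcm_L(D)$, we have $d \sleq_L m$ for every $d \in D$. Thus $m$ is a common left divisor of $X$ that is a left multiple of every common left divisor, which is exactly what it means to be a greatest common left divisor.

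Uniqueness then follows from antisymmetry of $\sleq_L$, which itself comes from the length homomorphism: if $m$ and $m'$ are both greatest, then $m \sleq_L m'$ and $m' \sleq_L m$, forcing $l(m) = l(m')$; writing $m' = mc$ gives $l(c) = 0$, so $c = e$ and $m = m'$. Granting the previous lemma on least common multiples, the whole argument is short, and the one genuinely substantive move is the idea of applying the least common multiple construction to the finite set of common divisors and then verifying that the resulting element lands back among the common divisors. The real work---existence of least common multiples---is precisely the content of the cited lemma, so I would not reprove it here; were it unavailable, establishing that existence (via an induction on length exploiting the defining relations) would be the main obstacle.
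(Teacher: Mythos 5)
Your proof is correct, and it is essentially the classical argument of Brieskorn--Saito's Proposition~4.2, which the paper simply cites without reproving: the set $D$ of common left divisors is nonempty and finite, every element of $X$ is a common left multiple of $D$, and $\lcm_L(D)$ (which exists by the preceding lemma) is then checked to be the greatest common left divisor. Your handling of uniqueness via the length homomorphism and your explicit restriction to $X \neq \emptyset$ are both sound.
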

				
				
				A key property of finite type Artin groups is that the generating set $S$ has a common left multiple and a common right multiple and these common multiples are equal.  Moreover, conjugation by this common multiple permutes the elements of $S$.

				\begin{defn}\label{def:garsideelement}
					Let~$A_\G$ be a finite type Artin group. The \emph{Garside element}~$\Delta_S\in \AG^+$ is the unique element satisfying $\Delta_S=\lcm_L(S)=\lcm_R(S)$.  
				\end{defn}

				Gaussian and Garside monoids and groups were introduced by Dehornoy and Paris in 1999, as generalisations of finite type Artin monoids and groups \cite{Dehornoy1999}. Finite type Artin groups are examples of Garside groups, and thus they have a~\emph{Garside structure}, based on the existence of the Garside element, which we exploit  throughout this work.  In particular, the Garside element allows for a direct correspondence between group elements and monoid elements.

				\begin{lemma}\label{lem:garsideform}
					Given a finite type Artin group~$A_\G$ and an element~$\alpha \in A_\G$, there exists~$m\in A^+_\G$ and~$k\in\Z$ such that $\alpha=m\Delta_S^k$.  Moreover, this decomposition is unique if we require that~ $m\nsucceq_R\Delta_S$.
				\end{lemma}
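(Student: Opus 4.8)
The plan is to build the decomposition starting from the known presentation of group elements as a quotient of two positive elements, and then to absorb the negative part into a power of the Garside element. Concretely, since $A_\G$ is finite type, every $\alpha \in A_\G$ can be written as $\alpha = a b^{-1}$ with $a, b \in A_\G^+$. The heart of the existence statement is then the divisibility lemma that every positive element left-divides a suitable power of $\Delta_S$, namely $b \preceq_L \Delta_S^{\,l(b)}$ for every $b \in A_\G^+$. Granting this, write $\Delta_S^{\,l(b)} = b d$ with $d \in A_\G^+$, so that $b^{-1} = d\,\Delta_S^{-l(b)}$ and hence $\alpha = (ad)\,\Delta_S^{-l(b)}$. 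Setting $m = ad \in A_\G^+$ and $k = -l(b) \in \Z$ gives a decomposition $\alpha = m\Delta_S^k$, establishing existence.

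To prove the divisibility lemma I would induct on $n = l(b)$, the base case $n=0$ being trivial. The key structural input is that conjugation by $\Delta_S$ permutes $S$, which upgrades to the identity $A_\G^+\,\Delta_S = \Delta_S\, A_\G^+$ (move $\Delta_S$ across a positive word one generator at a time, using that conjugation by $\Delta_S$ preserves $S$). For the inductive step, write $b = b's$ with $s\in S$ and $l(b') = n-1$; by induction $\Delta_S^{\,n-1} = b'c$ for some $c\in A_\G^+$, so
\[
\Delta_S^{\,n} = b'\, c\,\Delta_S = b'\,\Delta_S\, c'
\]
for some $c' \in A_\G^+$, using the identity above. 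Since $s \preceq_L \Delta_S$ (because $\Delta_S = \lcm_L(S)$), write $\Delta_S = s r$ with $r\in A_\G^+$, giving $\Delta_S^{\,n} = b's\, r c' = b\,(rc')$ and hence $b\preceq_L \Delta_S^{\,n}$. I expect this lemma — and in particular the commutation identity $A_\G^+\Delta_S = \Delta_S A_\G^+$ — to be the main obstacle, since it is what makes the Garside element large enough to absorb every positive element.

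For the refined existence statement, I would reduce an arbitrary decomposition to the required normal form by repeatedly stripping factors of $\Delta_S$ from the right: whenever $m \succeq_R \Delta_S$, write $m = m'\Delta_S$ and replace the pair $(m,k)$ by $(m', k+1)$, using $m\Delta_S^k = m'\Delta_S^{k+1}$. Since $l(m') = l(m) - l(\Delta_S) < l(m)$ and lengths are nonnegative integers, this process terminates at a decomposition with $m\nsucceq_R \Delta_S$.

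Finally, for uniqueness, suppose $\alpha = m_1\Delta_S^{k_1} = m_2\Delta_S^{k_2}$ with $m_1,m_2\nsucceq_R \Delta_S$ and, without loss of generality, $k_1 \geq k_2$. Cancelling $\Delta_S^{k_2}$ in the group yields the positive equation $m_2 = m_1\Delta_S^{\,k_1-k_2}$. If $k_1 > k_2$ this exhibits $\Delta_S$ as a right divisor of $m_2$, that is $m_2 \succeq_R \Delta_S$, contradicting the hypothesis; hence $k_1 = k_2$, and then left cancellation gives $m_1 = m_2$.
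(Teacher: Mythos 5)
Your proof is correct, but there is nothing in the paper to compare it against: the paper states Lemma~\ref{lem:garsideform} without proof, as a standard consequence of the Garside structure on finite type Artin groups, with the relevant sources (Garside, Brieskorn--Saito, Dehornoy--Paris) cited in the surrounding discussion. What you have written is a correct, self-contained reconstruction of that classical argument: the fraction decomposition $\alpha = ab^{-1}$ (which the paper records as a known property of finite type Artin groups), the divisibility lemma $b \sleq_L \Delta_S^{\,l(b)}$ proved by induction on the length homomorphism via the commutation identity $\AG^+\Delta_S = \Delta_S\AG^+$ (which follows, as you say, from the fact that conjugation by $\Delta_S$ permutes $S$ --- a fact the paper itself states just before Definition~\ref{def:garsideelement}), termination of the $\Delta_S$-stripping procedure because $l$ is a monoid homomorphism to $\N$ and $l(\Delta_S)\geq 1$, and uniqueness by cancelling powers of $\Delta_S$. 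Each step checks out against the paper's conventions ($s \sleq_L \Delta_S$ because $\Delta_S = \lcm_L(S)$; $m \sgeq_R \Delta_S$ meaning $m = m'\Delta_S$). One step deserves to be made explicit: in the uniqueness argument, cancelling $\Delta_S^{k_2}$ gives the equality $m_2 = m_1\Delta_S^{\,k_1-k_2}$ of positive elements \emph{in the group}, and to conclude $m_2 \sgeq_R \Delta_S$ as a statement about the monoid you need that $\AG^+$ injects into $\AG$. For finite type groups this is classical (the monoid is cancellative with common multiples, so it embeds in its group of fractions by \"{O}re's criterion), and the paper assumes this embedding throughout, so the appeal is harmless --- but since your divisibility lemma in fact verifies the common-multiple hypothesis of \"{O}re's criterion, a fully self-contained write-up would note this dependence rather than use the embedding silently.
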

				
				We emphasize that Garside elements  exist only for finite type Artin groups.  Indeed, Brieskorn and Saito \cite{BrieskornSaito1972} prove that for $A_\G$ of infinite type,  no element in the monoid can be a common right or left multiple of $S$.
				
				\begin{defn}
					Let~$\alpha \in A_\G^+$. Define the subset $T_\alpha\subseteq S$ to be the set of all generators~$s\in S$ such that $\alpha\sgeq_R s$, that is
					\[
					s\in T_\alpha \iff \alpha =\beta s \text{ for some }\beta \in A_\G^+.
					\]
				\end{defn}

				\begin{lemma}[\cite{BrieskornSaito1972}]\label{lem:endgenfinitetype}
					For any~$\alpha \in A_\G^+$, the subset~$T_\alpha\subseteq S$ is finite type, i.e.~$T_\alpha \in \SF$.
				\end{lemma}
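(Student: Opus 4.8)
The plan is to produce from $\alpha$ a common right multiple of the generating set of the parabolic submonoid $A_{T_\alpha}^+$, and then to apply the Brieskorn--Saito criterion that forbids such a common multiple in the infinite type case. First I would observe that $\alpha$ is \emph{itself} a common right multiple of $T_\alpha$: by the very definition of $T_\alpha$ we have $\alpha \sgeq_R s$ for every $s \in T_\alpha$, so $\alpha$ is simultaneously right-divisible by each generator in $T_\alpha$. Since $T_\alpha \subseteq S$ is finite and admits the common right multiple $\alpha$, \cite[Proposition~4.1]{BrieskornSaito1972} yields that $\delta := \lcm_R(T_\alpha)$ exists in $\AG^+$, and minimality of the lcm gives $\alpha \sgeq_R \delta$.

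The technical heart of the argument is to upgrade $\delta$ to an element of the parabolic submonoid $A_{T_\alpha}^+$, that is, to show that $\delta$ can be represented by a positive word in the generators $T_\alpha$ alone; I expect this to be the main obstacle. It cannot be read off from any naive letter-counting invariant, since the defining relations $sts\cdots = tst\cdots$ do not preserve the number of occurrences of a single generator. Instead one appeals to the closure of parabolic submonoids of Artin monoids under divisibility, together with the fact that right-divisibility in $A_{T_\alpha}^+$ is the restriction of right-divisibility in $\AG^+$; these facts together force the least common right multiple of $T_\alpha$ to coincide with the one computed inside $A_{T_\alpha}^+$, and hence to lie in $A_{T_\alpha}^+$.

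Granting $\delta \in A_{T_\alpha}^+$, the conclusion follows quickly. By van der Lek's theorem \cite{VanderLek1983}, the special subgroup $A_{T_\alpha}$ is the Artin group associated to the full subgraph of $\G$ spanned by $T_\alpha$, with standard generating set $T_\alpha$ and positive monoid $A_{T_\alpha}^+$. Thus $\delta$ is a common right multiple, inside this Artin monoid, of its entire generating set. But Brieskorn and Saito prove that an Artin monoid of infinite type possesses no common right (or left) multiple of its generating set; therefore $A_{T_\alpha}$ cannot be of infinite type. Hence $A_{T_\alpha}$ is of finite type, that is $T_\alpha \in \SF$, as required.
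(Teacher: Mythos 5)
Your overall route is the intended one: the paper itself gives no proof (the lemma is quoted from Brieskorn--Saito), and the standard argument is exactly what you outline --- $\alpha$ is by definition a common right multiple of $T_\alpha$, and the Brieskorn--Saito obstruction (recorded in the paper immediately after Lemma~\ref{lem:garsideform}: in an infinite type Artin monoid no element is a common right or left multiple of the generating set) then forces $W_{T_\alpha}$ to be finite. Your instinct that the ``upgrade'' of $\delta=\lrm(T_\alpha)$ to an element of the parabolic submonoid is the technical heart is also correct; in fact, if you quote the sharper form of Brieskorn--Saito's result on fundamental elements --- a subset $T\subseteq S$ admits a common multiple in $\AG^+$ \emph{if and only if} $W_T$ is finite, in which case the lcm is $\Delta_T$ --- the entire detour through $\delta$ becomes unnecessary and the lemma is a one-line consequence.

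The genuine weakness is that your justification of $\delta\in A^+_{T_\alpha}$ is circular as stated. Closure of parabolic submonoids under divisibility, together with the fact that divisibility in $A^+_{T_\alpha}$ is the restriction of divisibility in $\AG^+$, only yields $\delta\in A^+_{T_\alpha}$ once you already have \emph{some} common right multiple of $T_\alpha$ lying inside $A^+_{T_\alpha}$; but the only common multiple in hand is $\alpha$, which need not lie in the parabolic, so neither fact applies to it. Within the paper's own toolkit the fix is short: by the result of Boyd quoted before Lemma~\ref{lem:representative}, $\alpha$ has a greatest right divisor $\End{\alpha}{T_\alpha}$ lying in $A^+_{T_\alpha}$; each $s\in T_\alpha$ is a right divisor of $\alpha$ lying in $A^+_{T_\alpha}$, hence right-divides $\End{\alpha}{T_\alpha}$, which is therefore a common right multiple of $T_\alpha$ \emph{inside} the parabolic. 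Minimality gives $\End{\alpha}{T_\alpha}\sgeq_R\delta$, and now closure under divisibility legitimately places $\delta$ (and the cofactors $z_s$ with $\delta=z_s s$) in $A^+_{T_\alpha}$, after which your concluding contradiction goes through. One further small correction: van der Lek's theorem is a statement about \emph{groups} and is not the right tool here; the identification of the special submonoid $A^+_{T_\alpha}$ with the Artin monoid of the full subgraph spanned by $T_\alpha$ follows from the elementary observation that every defining relation preserves the set of letters occurring in a positive word, so no deep injectivity result is needed.
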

				
				\bigskip
				
				\subsection{CAT(0) cube complexes} \label{Subsection:CAT(0)} 
				
				In addition to the combinatorial structure of the Artin monoid and Artin group, we will be interested in the geometric structure of their associated Deligne complexes. In this section we review some geometric notions that will be used in later sections.  For more details and proofs, see \cite{BridsonHaefliger2011}.
				
				Let $(X,d)$ be a geodesic metric space, that is, a metric space in which any two points $x,y$ are connected by a path of length $d(x,y)$.  Such a path is called a \emph{geodesic} from $x$ to $y$. 
				Let $T(a,b,c)$ denote a triangle in $X$ with vertices $a,b,c$ and geodesic edges $[a,b], [b,c], [c,a]$.
				A \emph{comparison triangle} is a triangle $T(a',b',c')$ in the Euclidean plane with the same edge lengths.  The CAT(0) condition states that triangles in $X$ are ``at least as thin" as their comparison triangles in $\mathbb E^2$.  More precisely,
				
				\begin{definition} A geodesic metric space $X$ is \emph{CAT(0)} if for any geodesic triangle $T(a,b,c)$  in $X$ and any points $x \in [a,b]$, $y \in [b,c]$, the corresponding points in a comparison triangle $x' \in [a',b']$, $y' \in [b',c']$ satisfy 
					$$d_X(x,y) \leq d_{\mathbb E^2}(x',y').$$
					We say $X$ is \emph{locally CAT(0)} if every point in $X$ has a neighbourhood which is CAT(0).
				\end{definition}
				
				CAT(0) spaces satisfy many nice properties. Here are a few well-known facts. 
				\begin{itemize}
					\item If $X$ is CAT(0), then any two points in $X$ are connected by a unique geodesic.
					\item  If $X$ is CAT(0), then it is contractible.
					\item  If $X$ is locally CAT(0) and simply connected then it is (globally) CAT(0).
					\item  Any locally geodesic path in a CAT(0) space is  a geodesic.
				\end{itemize}
				
				A particularly useful class of CAT(0) spaces are CAT(0) cube complexes (CCCs), both because they are easy to construct and because they come with a nice combinatorial structure in addition to their geometric structure.  We recall a few basics of CCCs here and refer the reader to \cite{Haglund2008, Sageev2012} for more details.
				
				A cube complex is a space obtained by gluing together a collection of standard Euclidean cubes, $[0,1]^n$, of varying dimensions, via isometries of faces.  Let $X$ be such a complex with the induced path metric $d$.  The link of a vertex $v$ in $X$, denoted $lk_X(v)$, (or simply $lk(v)$  when $X$ is understood)  is the simplicial complex with a $k$-simplex for each $k+1$ cube $C$ containing $v$. Viewing this simplex as the unit tangent space of $v$ in $C$, we can identify this simplex with a quadrant in the unit sphere $\mathbb S^{k}$.  This gives rise to  a natural piece-wise spherical metric on $lk(v)$ with all edges of length $\frac{\pi}{2}$.  
				
				Identifying $lk(v)$ with the unit tangent space of $v$ in $X$, distances in $lk(v)$ correspond to angles between tangent vectors.  In particular, a path $p$ passing through $v$ is locally geodesic at $v$ if and only if its incoming and outgoing tangent vectors have distance at least $\pi$ in $lk(v)$.  A similar piece-wise spherical metric can be put on the link of a point $x$ in a higher dimensional face and once again, a path in $X$ passing through $x$ is locally geodesic at $x$ if and only if its incoming and outgoing tangent vectors have distance at least $\pi$ in this $lk(x)$.
				
				Using this fact, Gromov showed that a certain combinatorial condition on these links was sufficient to determine whether $X$ is locally CAT(0).  
				
				\begin{definition}  A simplicial complex $L$ is a \emph{flag complex} if every set of $k$ vertices in $L$ that are pairwise joined by edges, span a $k+1$ simplex.  In particular, a flag complex is completely determined by its one-skeleton. 
				\end{definition}
				
				Gromov's condition states,
				
				\begin{theorem}   A cube complex $X$ is locally CAT(0) if and only if for every vertex $v$ in $X$, $lk_X(v)$ is a flag complex.  
				\end{theorem}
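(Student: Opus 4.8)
The plan is to deduce this from the general link criterion for piecewise Euclidean complexes together with a purely combinatorial characterization of when the spherical links are $CAT(1)$. Concretely, $X$ is a piecewise Euclidean complex with finitely many isometry types of cells, so by the standard link criterion (see \cite{BridsonHaefliger2011}) it is locally $CAT(0)$ if and only if $lk_X(v)$, equipped with its induced piecewise spherical metric, is $CAT(1)$ for every vertex $v$. (Checking vertices suffices: the link of an interior point of a higher face is a spherical join of a round sphere with the link of that face, and such joins are $CAT(1)$ exactly when the factors are.) Since each cube of $X$ is a unit Euclidean cube, every simplex of $lk_X(v)$ is an all-right spherical simplex, i.e.\ all of its edges have length $\tfrac{\pi}{2}$; hence the metric on $lk_X(v)$ is completely determined by its simplicial structure. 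The theorem therefore reduces to the statement that an all-right piecewise spherical complex $L$ is $CAT(1)$ if and only if $L$ is a flag complex, and I would prove the two implications separately.

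For the direction ``locally $CAT(0)$ $\Rightarrow$ flag'' I argue by contraposition. Suppose some $lk_X(v)$ fails to be flag, so there are vertices $w_0,\dots,w_k$ that are pairwise joined by edges but do not span a $k$-simplex; choose a minimal such configuration. The edges $[w_0,w_1],[w_1,w_2],\dots,[w_k,w_0]$ then form a loop of length $(k+1)\cdot\tfrac{\pi}{2} < 2\pi$. At each $w_i$ the two incident edges correspond to two vertices of $lk(w_i)$ which, by minimality and the absence of the spanning simplex, are not joined by an edge there; hence their distance in $lk(w_i)$ is at least $\pi$, and by the local-geodesic criterion recalled in the excerpt the loop is locally geodesic at $w_i$. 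Thus $lk_X(v)$ contains a closed geodesic of length $<2\pi$, which a $CAT(1)$ space cannot contain. This contradiction shows every link is flag.

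For the direction ``flag $\Rightarrow$ $CAT(1)$'' I proceed by induction on $\dim L$, using that a piecewise spherical complex with finitely many shapes is $CAT(1)$ precisely when it is locally $CAT(1)$ and contains no closed geodesic of length $<2\pi$ \cite{BridsonHaefliger2011}. For local $CAT(1)$, the link of any vertex $w$ in $L$ is again an all-right spherical complex of strictly smaller dimension, and it is flag because the link of a vertex in a flag complex is flag; by the inductive hypothesis each such link is $CAT(1)$, so $L$ is locally $CAT(1)$. The base case $\dim L \le 1$ holds because a $1$-dimensional flag complex is a simple graph with no $3$-cycles (three pairwise-adjacent vertices would have to span a $2$-simplex), hence has girth at least $4$ and shortest circle of length at least $2\pi$.

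It remains to rule out closed geodesics of length $<2\pi$ in a flag all-right complex, and this is the main obstacle. A hypothetical such geodesic $\gamma$ decomposes into finitely many great-circle arcs lying in closed simplices and turning only at proper faces. I would track the sequence of vertices whose open stars $\gamma$ successively traverses: the all-right structure forces consecutive such vertices to be adjacent, and the local-geodesic (angle $\ge \pi$) condition at each transition point constrains the turning. The flag condition is then used to fill in the simplices spanned by these vertices, which either exhibits a shorter homotopic loop -- contradicting that $\gamma$ is a nonconstant local geodesic in a locally $CAT(1)$ space -- or forces the accumulated arc-lengths around the loop to total at least $2\pi$. Either way we contradict $\ell(\gamma)<2\pi$, completing the induction and hence the proof.
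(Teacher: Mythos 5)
This statement is Gromov's flag condition; the paper itself gives no proof, quoting it as a known theorem (the standard proofs are in \cite{BridsonHaefliger2011}, Theorem II.5.18, via exactly the reduction you set up: the link condition plus the fact that an all-right piecewise spherical complex is CAT(1) if and only if it is flag). Your overall strategy is therefore the right one, but your execution has a genuine error in the direction ``CAT(1) $\Rightarrow$ flag''. A minimal non-flag configuration need \emph{not} be an empty triangle: the boundary complex of a $3$-simplex has four pairwise adjacent vertices, every proper subset spans a simplex, yet the whole set does not. For such a minimal configuration with $k\geq 3$ your argument fails twice over. First, the edge loop has length $(k+1)\tfrac{\pi}{2}\geq 2\pi$, so its existence contradicts nothing. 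Second, and worse, it is not locally geodesic: minimality forces the proper subset $\{w_{i-1},w_i,w_{i+1}\}$ to span a $2$-simplex, so $w_{i-1}$ and $w_{i+1}$ \emph{are} joined by an edge in $lk(w_i)$ and the incoming and outgoing directions there are at distance exactly $\tfrac{\pi}{2}$, not $\geq\pi$ --- your parenthetical ``by minimality \dots\ are not joined by an edge there'' is exactly backwards when $k\geq 3$. The standard repair is an induction on dimension that you already use in the other direction: $lk(w_0)$ is again all-right, is CAT(1) by the link condition, and contains the empty $(k-1)$-simplex $w_1,\dots,w_k$ (pairwise adjacency in $lk(w_0)$ follows from minimality of the original configuration); descending, one reduces to $k=2$, where your $\tfrac{3\pi}{2}$-loop argument is correct.

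The second gap is that in the direction ``flag $\Rightarrow$ CAT(1)'' you have proved only the easy local part; the exclusion of closed geodesics of length $<2\pi$, which is the entire content of Gromov's Lemma, is left as a plan, and the asserted dichotomy (``a shorter homotopic loop or total length $\geq 2\pi$'') is not justified by anything you write. A workable version of this step needs quantitative control of how a geodesic crosses a vertex star: since the closed star of a vertex $v$ is a spherical cone over $lk(v)$, the function $f(t)=\cos d(v,\gamma(t))$ satisfies $f''+f=0$ along any local geodesic $\gamma$ missing $v$, so each component of $\gamma$ inside the open ball $B(v,\tfrac{\pi}{2})$ has length exactly $\pi$; if $\ell(\gamma)<2\pi$ there is at most one such component for each vertex, and two vertices whose open $\tfrac{\pi}{2}$-balls both meet $\gamma$ must have overlapping open stars, hence (flagness now entering through the join structure of stars of simplices) span an edge, which lets one project $\gamma$ into a link and run the induction to a contradiction. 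Without an argument of this kind, ``tracking the sequence of vertices whose open stars $\gamma$ traverses'' does not yield the conclusion. Since the paper treats this theorem as citable background, the honest options are either to carry out the full argument as above or simply to cite \cite{BridsonHaefliger2011}; as written, your proposal proves neither implication completely.
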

				
				In addition to their geometry, CAT(0)  cube complexes come with a combinatorial structure given by \emph{hyperplanes}.  These are codimension-one subspaces, made up of midplanes of cubes, that divide the complex into two components.  Much of the theory of CCCs depends on understanding the interplay between hyperplanes.  Moreover, in addition to the CAT(0) metric on a CCC $X$, there is another metric  known as the $\ell^{(1)}$-metric.  This is usually only applied to the 1-skeleton of $X$ and it is defined to be the minimal length of an edge path between two points in $X^{(1)}$.  This metric is not CAT(0) and there may be many minimal length edge paths between two points.  Nevertheless, the CAT(0) geodesic and the $\ell^{(1)}$-geodesics between two point $x,y$ are related by the fact that they cross exactly the same hyperplanes, namely the hyperplanes that separate $x$ from $y$.   
				
				\begin{definition}  Let $X$ be a CAT(0) cube complex.  For two vertices $v,w$ in $X$, the subcomplex spanned by the $\ell^{(1)}$-geodesics from $v$ to $w$ is called the \emph{cubical convex hull} of $x,y$.  With respect to the CAT(0) metric, it is the smallest convex subcomplex in $X$ containing the geodesic from $v$ to $w$.
				\end{definition}
				
				For example, dividing the Euclidean plane $\R^2$ into unit squares with vertices in $\Z^2$, for any two vertices $x,y$ the CAT(0) geodesic is the straight line connecting them and their cubical convex hull is the rectangle with this line as its diagonal.

\section{Definition of $\DGP$} \label{Section:Definition}

	Recall from \Cref{def:delignecomplex} that the Deligne complex $\DG$ is the cube complex associated to the partially ordered set of cosets $aA_T, T \in \SF$.  To define the monoid Deligne complex $\DGP$, we will need to define and understand the properties of monoid cosets. 	
	
	\begin{defn}\label{def:monoid coset}
		Given~$A_T^+$ a submonoid of~$A^+$, consider the relation $\sim_T$ on $\AG^+$ given by
		$$
		\alpha \sim_T \beta \iff \alpha t=\beta t' \text{ for some }t\text{ and }t'\text{ in }A_T^+.
		$$
		\noindent The relation~$\sim_T$ is symmetric and reflexive. Let $\approx_T$ be the transitive closure of $\sim_T$. That is, $\alpha \approx_T \beta$ if there is a chain of elements~$\tau_i$ in $\AG^+$ such that:
		$$
		\alpha \sim_T \tau_1 \sim_T \tau_2 \sim_T \cdots \sim_T \tau_k \sim_T \beta
		$$
		\noindent for some $k$. Denote the equivalence class of $\alpha$ under the relation $\approx_T$ as $\cst{\alpha}{T}$.
	\end{defn}

	It is shown in \cite{Boyd2020} that for any $\alpha \in \AG^+$, the set of right divisors of $\alpha$ that lie in the submonoid $A_T^+$ has a unique maximal element which we denote by $\End{\alpha}{T}$, and we can factor $\alpha$ as 
	$$\alpha = \mr{\alpha}{T}\cdot \End{\alpha}{T}.$$
	
	\begin{lemma}\label{lem:representative}  Let $\alpha \in A^+$.  Then
		for any $\beta \approx_T \alpha$, $\mr{\beta}{T} = \mr{\alpha}{T}$ so
		$$
		\cst{\alpha}{T}=\{\beta \in A^+ \mid \beta  = \mr{\alpha}{T} t \text{ for some }t \in A^+_T\}.
		$$
		Moreover, $\mr{\alpha}{T}$  is the unique minimal length representative of $\cst{\alpha}{T}$.  
	\end{lemma}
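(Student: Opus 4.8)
The plan is to isolate a single multiplicative identity and deduce all three assertions from it. Namely, I would first prove the \emph{stability} of $\mr{\cdot}{T}$ under right multiplication by $A_T^+$: for every $\alpha \in A^+$ and every $t \in A_T^+$,
\[
\End{\alpha t}{T} = \End{\alpha}{T}\, t,
\qquad\text{equivalently}\qquad
\mr{\alpha t}{T} = \mr{\alpha}{T}.
\]
Granting this, the lemma falls out: if $\alpha \sim_T \beta$, say $\alpha t = \beta t'$ with $t, t' \in A_T^+$, then stability gives $\mr{\alpha}{T} = \mr{\alpha t}{T} = \mr{\beta t'}{T} = \mr{\beta}{T}$, and inducting along a defining chain $\alpha \sim_T \tau_1 \sim_T \cdots \sim_T \beta$ shows $\mr{\beta}{T} = \mr{\alpha}{T}$ for every $\beta \approx_T \alpha$.

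To prove stability, set $e = \End{\alpha}{T}$, so $\alpha = \mr{\alpha}{T}\, e$. Since $e, t \in A_T^+$, the product $et$ is a right divisor of $\alpha t$ lying in $A_T^+$, so by the maximality characterising $\End{\alpha t}{T}$ we may write $\End{\alpha t}{T} = k\, e\, t$ for some $k \in A^+$. The key input is that $A_T^+$ is closed under taking divisors in $A^+$: since $k$ is a left divisor of $\End{\alpha t}{T} \in A_T^+$, this forces $k \in A_T^+$, whence $ke \in A_T^+$. Now writing $\alpha t = p\,\End{\alpha t}{T} = p\,k\,e\,t$ and cancelling $t$ on the right (right cancellation) gives $\alpha = p\,k\,e$, so $ke$ is a right divisor of $\alpha$ lying in $A_T^+$. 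Maximality of $e = \End{\alpha}{T}$ then yields $e \sgeq_R ke$; comparing lengths through the homomorphism $l$ forces $l(k) = 0$, i.e.\ $k$ is trivial. Hence $\End{\alpha t}{T} = et$, and right-cancelling $et$ from $\alpha t = \mr{\alpha t}{T}\, et = \mr{\alpha}{T}\, et$ gives $\mr{\alpha t}{T} = \mr{\alpha}{T}$.

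For the set equality, the inclusion $\subseteq$ is immediate from stability: any $\beta \approx_T \alpha$ factors as $\beta = \mr{\beta}{T}\,\End{\beta}{T} = \mr{\alpha}{T}\,\End{\beta}{T}$ with $\End{\beta}{T} \in A_T^+$. For $\supseteq$, given $t \in A_T^+$ I would note $\mr{\alpha}{T}\, t \sim_T \mr{\alpha}{T}$ (taking the two $A_T^+$-multipliers to be $t$ and $1$) and $\mr{\alpha}{T} \sim_T \alpha$ (since $\mr{\alpha}{T}\,\End{\alpha}{T} = \alpha$), so $\mr{\alpha}{T}\, t \approx_T \alpha$. Finally, every element of $\cst{\alpha}{T}$ has the form $\mr{\alpha}{T}\, t$ and hence length $l(\mr{\alpha}{T}) + l(t) \geq l(\mr{\alpha}{T})$, with equality exactly when $l(t) = 0$, i.e.\ $t = 1$; thus $\mr{\alpha}{T}$ is the unique minimal-length representative.

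The main obstacle is the stability identity, and within it the single external fact that the parabolic submonoid $A_T^+$ is divisor-closed in $A^+$ (so that the cofactor $k$ stays in $A_T^+$); this closure is precisely what lets the maximality of $\End{\alpha}{T}$ bite. I would cite this as a standard property of Artin monoids (Brieskorn--Saito / van der Lek), or supply the short length/normal-form argument if it is not already available in the text; everything else is bookkeeping with right cancellation and the length homomorphism.
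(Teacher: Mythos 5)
Your proof is correct, but it takes a genuinely different route from the paper: the paper disposes of the first statement in one line by citing Lemma~5.21 of the first author's earlier work \cite{Boyd2020}, and then reads off the set description and the minimal representative, whereas you reprove that statement from scratch. Your engine is the stability identity $\End{\alpha t}{T} = \End{\alpha}{T}\,t$ (equivalently $\mr{\alpha t}{T} = \mr{\alpha}{T}$) for $t \in A_T^+$, from which the invariance of $\mr{\cdot}{T}$ along a defining chain for $\approx_T$ follows by induction, and this argument is sound: you correctly use the characterisation of $\End{\alpha t}{T}$ as the \emph{greatest} right divisor of $\alpha t$ in $A_T^+$ (the same characterisation the paper itself invokes in the proof of its Lemma on cosets), right cancellation, and the length homomorphism to kill the cofactor $k$. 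You also correctly flag the one genuinely external input, namely that $A_T^+$ is divisor-closed in $A^+$ (so $k \in A_T^+$); this is indeed standard, provable by the support argument you sketch, since the braid relations preserve the set of letters occurring in a positive word. Your closing length argument for uniqueness of the minimal representative matches the paper's in substance but is more explicit, since $l(\mr{\alpha}{T} t) = l(\mr{\alpha}{T}) + l(t)$ with $l(t) = 0$ forcing $t = e$. What each approach buys: the paper's citation is shorter and consistent with its policy of importing the combinatorics of monoid cosets from \cite{Boyd2020}, while your version makes the lemma self-contained within the present paper, at the cost of having to justify (or cite) divisor-closedness of parabolic submonoids and the greatest-right-divisor property that the paper takes as given.
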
	
	
	\begin{proof}
		The first statement is Lemma 5.21 in \cite{Boyd2020}.  By definition, $\mr{\alpha}{T}$ is the smallest left divisor of $\alpha$ lying in $\cst{\alpha}{T}$.  Since these minimal left divisors are the same for every $\beta \in \cst{\alpha}{T}$, the second statement follows.
	\end{proof}

	We now establish some useful properties of these monoid cosets.

	\begin{lemma}\label{lem:cosets} Let $\alpha \in A^+$ and $T_1,T_2 \subseteq S$.  Then
		\begin{enumerate}
			\item $\cst{\alpha}{T_1} \subset \cst{\alpha}{T_2} $ if and only if $T_1 \subset T_2$.
			\item $\cst{\alpha}{T_1} \cap \cst{\alpha}{T_2}=\cst{\alpha}{T_1 \cap T_2}$.
		\end{enumerate}
	\end{lemma}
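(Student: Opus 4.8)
The plan is to reduce both statements to manipulations of the minimal coset representatives $\mr{\alpha}{T}$, using the explicit description $\cst{\alpha}{T}=\{\mr{\alpha}{T}\,t \mid t\in A_T^+\}$ from \Cref{lem:representative} together with left and right cancellation. Throughout I would use one elementary fact about supports: the set of generators occurring in a word representing $w\in A^+$ is unchanged by the defining relations (each relation uses exactly the two generators $\{s,t\}$ on both sides), so it is a well-defined subset $\mathrm{supp}(w)\subseteq S$ with $w\in A_T^+ \iff \mathrm{supp}(w)\subseteq T$. Two consequences I would record first: $A_{T_1}^+\cap A_{T_2}^+=A_{T_1\cap T_2}^+$, and any left or right divisor $d$ of some $w\in A_T^+$ again lies in $A_T^+$ (as $\mathrm{supp}(d)\subseteq\mathrm{supp}(w)$). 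I would also note the maximality property that every right divisor of $\alpha$ lying in $A_T^+$ divides $\End{\alpha}{T}$: this is part of the structure from \cite{Boyd2020}, but also follows in one line from \Cref{lem:representative}, since if $g\in A_T^+$ and $\alpha=xg$ then $x\sim_T\alpha$, so $x=\mr{\alpha}{T}y$ with $y\in A_T^+$, and cancelling $\mr{\alpha}{T}$ from $\alpha=\mr{\alpha}{T}yg=\mr{\alpha}{T}\End{\alpha}{T}$ gives $g\sleq_R\End{\alpha}{T}$.

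For part (1), the backward direction is immediate: if $T_1\subseteq T_2$ then $A_{T_1}^+\subseteq A_{T_2}^+$, so $\alpha\sim_{T_1}\beta$ implies $\alpha\sim_{T_2}\beta$, whence $\approx_{T_1}$ refines $\approx_{T_2}$ and $\cst{\alpha}{T_1}\subseteq\cst{\alpha}{T_2}$. For the forward direction, assume $\cst{\alpha}{T_1}\subseteq\cst{\alpha}{T_2}$ and fix $s\in T_1$. Both $\mr{\alpha}{T_1}s$ and $\mr{\alpha}{T_1}$ lie in $\cst{\alpha}{T_1}$, hence by hypothesis in $\cst{\alpha}{T_2}=\mr{\alpha}{T_2}A_{T_2}^+$; writing $\mr{\alpha}{T_1}s=\mr{\alpha}{T_2}u$ and $\mr{\alpha}{T_1}=\mr{\alpha}{T_2}v$ with $u,v\in A_{T_2}^+$ and left-cancelling $\mr{\alpha}{T_2}$ gives $u=vs$. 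Then $s$ is a right divisor of $u\in A_{T_2}^+$, so $s\in A_{T_2}^+$, i.e.\ $s\in T_2$; hence $T_1\subseteq T_2$.

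For part (2), the inclusion $\supseteq$ follows directly from part (1), since $T_1\cap T_2\subseteq T_i$ gives $\cst{\alpha}{T_1\cap T_2}\subseteq\cst{\alpha}{T_i}$. The substance is $\subseteq$, and my main tool would be the formula
\[
\mr{\alpha}{T_1\cap T_2}=\llm\bigl(\mr{\alpha}{T_1},\,\mr{\alpha}{T_2}\bigr),
\]
whose right-hand side exists because $\mr{\alpha}{T_1}$ and $\mr{\alpha}{T_2}$ have the common left multiple $\alpha$. To prove it, set $p_i=\mr{\alpha}{T_i}$, $p=\mr{\alpha}{T_1\cap T_2}$, $\ell=\llm(p_1,p_2)$. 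By part (1), $p\in\cst{\alpha}{T_1\cap T_2}\subseteq\cst{\alpha}{T_i}=p_iA_{T_i}^+$, so $p_i\sleq_L p$ and thus $\ell\sleq_L p$. For the reverse divisibility, write $\alpha=\ell g$; since $p_1\sleq_L\ell$, writing $\ell=p_1h_1$ and comparing $\alpha=p_1h_1g=p_1\End{\alpha}{T_1}$ (cancelling $p_1$) gives $\End{\alpha}{T_1}=h_1g$, so $g$ is a right divisor of $\End{\alpha}{T_1}\in A_{T_1}^+$, whence $g\in A_{T_1}^+$, and symmetrically $g\in A_{T_2}^+$, giving $g\in A_{T_1\cap T_2}^+$. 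Thus $g$ is a right divisor of $\alpha$ in $A_{T_1\cap T_2}^+$, so $g\sleq_R\End{\alpha}{T_1\cap T_2}$ by the maximality property, and cancelling $g$ from $\alpha=p\,\End{\alpha}{T_1\cap T_2}=\ell g$ yields $p\sleq_L\ell$. Hence $\ell=p$. Finally, if $\beta\in\cst{\alpha}{T_1}\cap\cst{\alpha}{T_2}$ then $\mr{\beta}{T_i}=\mr{\alpha}{T_i}=p_i$ by \Cref{lem:representative}, so applying the formula to $\beta$ gives $\mr{\beta}{T_1\cap T_2}=\llm(p_1,p_2)=\mr{\alpha}{T_1\cap T_2}$, i.e.\ $\beta\in\cst{\alpha}{T_1\cap T_2}$.

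The main obstacle I anticipate is precisely the reverse divisibility $p\sleq_L\ell$ in the lcm formula: this is where one must convert ``common left multiple of the two stripped representatives'' back into information about the $(T_1\cap T_2)$-tail of $\alpha$, and it is what forces the joint use of the support identity $A_{T_1}^+\cap A_{T_2}^+=A_{T_1\cap T_2}^+$ and the maximality of $\End{\alpha}{T_1\cap T_2}$. Everything else is routine cancellation. I would also double-check the degenerate case $T_1\cap T_2=\varnothing$, where $A_\varnothing^+$ is trivial, $\mr{\alpha}{\varnothing}=\alpha$, and the formula reads $\alpha=\llm(\mr{\alpha}{T_1},\mr{\alpha}{T_2})$, as well as the case $\mr{\alpha}{T_1}=\mr{\alpha}{T_2}$, to confirm the cancellation steps remain valid.
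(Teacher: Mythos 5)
Your proof is correct and takes essentially the same route as the paper: part (1) is the paper's cancellation argument almost verbatim, and part (2) rests on the identical key claim that $\mr{\alpha}{T_1\cap T_2}=\llm\bigl(\mr{\alpha}{T_1},\mr{\alpha}{T_2}\bigr)$, proved from the same ingredients (invariance of support under the relations, left/right cancellation, and maximality of $\End{\alpha}{T}$ among right divisors in $A_T^+$). The only cosmetic difference is that the paper verifies this claim by dualizing it to the statement that $\End{\gamma}{T_1\cap T_2}$ is the greatest common right divisor of $\End{\gamma}{T_1}$ and $\End{\gamma}{T_2}$, whereas you check the two divisibilities $\ell \sleq_L p$ and $p \sleq_L \ell$ directly via the complementary divisor $g$ --- an equivalent computation.
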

	
	\begin{proof}
		(1) $T_1 \subset T_2$ implies $\cst{\alpha}{T_1} \subset \cst{\alpha}{T_2}$ is clear from the definition of the equivalence relation.  Conversely, suppose $\cst{\alpha}{T_1} \subset \cst{\alpha}{T_2}$.  Then $\mr{\alpha}{T_1}=\mr{\alpha}{T_2} t$ for some $t \in A^+_{T_2}$.  So for any $s \in T_1$, the element $\beta = \mr{\alpha}{T_1} s = \mr{\alpha}{T_2} t s$ lies in $\cst{\alpha}{T_1} \subset \cst{\alpha}{T_2}$.  Thus we can also write $\beta = \mr{\alpha}{T_2} t'$ for some $t' \in A^+_{T_2}$. Cancelling $\mr{\alpha}{T_2}$ we see that $t s = t' \in T_2$, and conclude that $s \in T_2$.
		
		(2)  The inclusion~ $\cst{\alpha}{T_1} \cap \cst{\alpha}{T_2} \supseteq \cst{\alpha}{T_1 \cap T_2}$ follows from part (1), so it remains to show that $\cst{\alpha}{T_1} \cap \cst{\alpha}{T_2}\subseteq \cst{\alpha}{T_1 \cap T_2}$. That is, given~$\beta$ such that~$\mr{\beta}{T_1}=\mr{\alpha}{T_1}$ and~$\mr{\beta}{T_2}=\mr{\alpha}{T_2}$ we wish to show~$\mr{\beta}{T_1 \cap T_2}=\mr{\alpha}{T_1 \cap T_2}$.

		{\bf Claim}: For any word~$\gamma$ in~$A^+$,~$\mr{\gamma}{T_1 \cap T_2}$ is the least common left-multiple of~$\mr{\gamma}{T_1}$ and~$\mr{\gamma}{T_2}$.
		
		Given the claim, it follows that if $\mr{\beta}{T_1}=\mr{\alpha}{T_1}$ and~$\mr{\beta}{T_2}=\mr{\alpha}{T_2}$ then their least common left-multiples are also equal, that is~$\mr{\beta}{T_1 \cap T_2}=\mr{\alpha}{T_1 \cap T_2}$. 
		
		To prove the claim, recall that for any $T$, $\gamma = \mr{\gamma}{T} \End{\gamma}{T}$ where $\End{\gamma}{T}$ is the greatest right divisor of $\gamma$ contained in $A^+_T$.  Thus, $\mr{\gamma}{T_1 \cap T_2}$ is the least common left-multiple of~$\mr{\gamma}{T_1}$ and~$\mr{\gamma}{T_2}$ if and only if $\End{\gamma}{T_1 \cap T_2}$ is the greatest common right divisor of~$\End{\gamma}{T_1}$ and~$\End{\gamma}{T_2}$.  Denote this greatest common right divisor by $g$.  
		
		Since  $g$ is a right divisor of $\End{\gamma}{T_1} \in A_{T_i}^+$  for $i=1,2$, it is a right divisor of $\gamma$ that lies in $A_{T_1 \cap T_2}^+$.  So by definition, $\End{\gamma}{T_1 \cap T_2} \sgeq_R g$. Conversely, any right divisor of $\gamma$ in $A_{T_1 \cap T_2}^+$ is also a right divisor in $A_{T_i}^+$,  so  
		$$\End{\gamma}{T_1 \cap T_2}=\End{\End{\gamma}{T_i }}{T_1 \cap T_2}.$$
		It follows that $\End{\gamma}{T_i} \sgeq_R \End{\gamma}{T_1 \cap T_2 }$ for both $i=1,2$, so $g \sgeq_R \End{\gamma}{T_1 \cap T_2}$ since~$g$ is the greatest common right divisor.  This proves the claim.
	\end{proof}

	In the discussion that follows, we will mostly be interested in cosets associated to subsets $T \in \SF$. In this case, the monoid cosets are closely related to the groups cosets. We view the Artin monoid~$A_\G^+$ as a sub-monoid of the Artin group~$A_\G$, and throughout use the highly non-trivial result, due to Paris \cite{Paris2002}, that the Artin monoid injects into the Artin group.
	
	\begin{lemma}\label{lem:group cosets}
		Let~$\alpha \in A^+_\G$. If $T \in \SF$, then $\cst{\alpha}{T} = \alpha A_T \cap A_\G^+$. 
	\end{lemma}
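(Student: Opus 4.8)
The plan is to prove the two inclusions separately. Throughout I will use the concrete description of the monoid coset from Lemma~\ref{lem:representative}, namely $\cst{\alpha}{T} = \{ \mr{\alpha}{T} t \mid t \in A_T^+\}$, together with the factorisation $\alpha = \mr{\alpha}{T}\,\End{\alpha}{T}$ with $\End{\alpha}{T} \in A_T^+$. The key extra ingredient, and the only place where the hypothesis $T \in \SF$ is genuinely used, is that a finite type special group is the group of fractions of its monoid: since $A_T$ is finite type, every element $a \in A_T$ can be written as $a = u v^{-1}$ with $u, v \in A_T^+$ (a standard consequence of the Garside structure, cf.\ Lemma~\ref{lem:garsideform} applied to $A_T$). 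I will also use Paris's injectivity $A_\G^+ \hookrightarrow A_\G$ freely, so that positive elements may be compared, inverted, and cancelled inside the group while keeping track of when an identity actually holds in the monoid.

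For the inclusion $\cst{\alpha}{T} \subseteq \alpha A_T \cap A_\G^+$, I would take $\beta = \mr{\alpha}{T} t$ with $t \in A_T^+$. Then $\beta \in A_\G^+$ is immediate. Inverting $\End{\alpha}{T}$ inside the group gives $\mr{\alpha}{T} = \alpha\,\End{\alpha}{T}^{-1}$, hence $\beta = \alpha\,\End{\alpha}{T}^{-1} t$. Since $\End{\alpha}{T}, t \in A_T^+ \subseteq A_T$ and $A_T$ is a group, the factor $\End{\alpha}{T}^{-1} t$ lies in $A_T$, so $\beta \in \alpha A_T$, as required. This direction uses nothing about finite type beyond the existence of inverses in $A_T$.

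For the reverse inclusion $\alpha A_T \cap A_\G^+ \subseteq \cst{\alpha}{T}$, I would take $\beta \in A_\G^+$ with $\beta = \alpha a$ for some $a \in A_T$, and write $a = u v^{-1}$ with $u, v \in A_T^+$ using the fraction decomposition above. This yields the group identity $\beta v = \alpha u$; both sides are products of positive elements, so by Paris's injectivity it is in fact an equality in $A_\G^+$. This is precisely the relation $\beta \sim_T \alpha$ of Definition~\ref{def:monoid coset}, witnessed by $v, u \in A_T^+$, whence $\beta \approx_T \alpha$ and $\beta \in \cst{\alpha}{T}$. The reverse inclusion is the step to watch: it is the only one that genuinely requires $A_T$ to be finite type (via $a = u v^{-1}$), and the one subtlety is to remember to transport the group equation $\beta v = \alpha u$ back to the monoid before invoking the definition of $\sim_T$, since that relation is stated for positive elements. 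Everything else is formal, and notably the transitive closure $\approx_T$ is never needed here, as a single application of $\sim_T$ already suffices.
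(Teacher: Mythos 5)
Your proof is correct and takes essentially the same route as the paper's: the reverse inclusion via writing $a=uv^{-1}$ with $u,v\in A_T^+$ (using $T\in\SF$) and transporting $\beta v=\alpha u$ back to the monoid so that a single application of $\sim_T$ suffices is exactly the published argument. The only cosmetic difference is that you spell out the easy inclusion through the minimal representative of Lemma~\ref{lem:representative}, where the paper simply declares it clear from the definition.
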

	
	\begin{proof} It is clear from the definition that $\cst{\alpha}{T} \subseteq \alpha A_T \cap A_\G^+$.  For the reverse inclusion, let $\beta \in \alpha A_T \cap A_\G^+$, so $\beta = \alpha g$ for some $g \in A_T$.  Since $T \in \SF$, $g$ can be written in the form $g=cd^{-1}$ where $c,d \in A_T^+$.  Thus $\beta d = \alpha c$ in the monoid, and we conclude that $\beta \in \cst{\alpha}{T}$. 
	\end{proof}
	
	\begin{remark}  In the the case that $T_1$ and $T_2$ are in $\SF$, there is an alternate proof of Lemma \ref{lem:cosets}(2).  Namely, 
		it follows from \cite{VanderLek1983} that for any $T_1,T_2$, the corresponding special  subgroups satisfy $A_{T_1} \cap A_{T_2} = A_{T_1 \cap T_2}$ and hence also
		$\alpha A_{T_1} \cap \alpha A_{T_2} = \alpha A_{T_1 \cap T_2}$.  Intersecting both sides with $A^+$ gives the desired equality.  
	\end{remark}
	
There is a natural partial ordering on the set of monoid cosets $\cst{\alpha}{T}$ given by inclusion.  Any two cosets which are related in this ordering have a common representative,  $\cst{\alpha}{T_1} \subset \cst{\alpha}{T_2}$.  By Lemma \ref{lem:cosets}, for such a pair, the interval $[\cst{\alpha}{T_1}, \cst{\alpha}{T_2}]$ consists of the set of cosets $\cst{\alpha}{T}$ with $T_1 \subseteq T \subseteq T_2$.

	\begin{definition}\label{Def:Deligne monoid complex}
		We define the \emph{monoid Deligne complex}~$\DGP$ to be the cube complex with vertices given by~$\cst{\alpha}{T}$ for~$\alpha \in A^+$ and~$T \in \mathcal{S}^f$,  and for~$T_1\subset T_2$,  the interval~$[\cst{\alpha}{T_1}, \cst{\alpha}{T_2}]$ spans a cube of dimension~$|T_2 \smallsetminus T_1|$. 
	\end{definition}
	
	The Artin monoid $\AG^+$ acts on $\DGP$ by left multiplication of cosets, that is, $\beta \cdot \cst{\alpha}{T} = \cst{\beta\alpha}{T}$.  This clearly preserves inclusions, and hence maps cubes to cubes.

	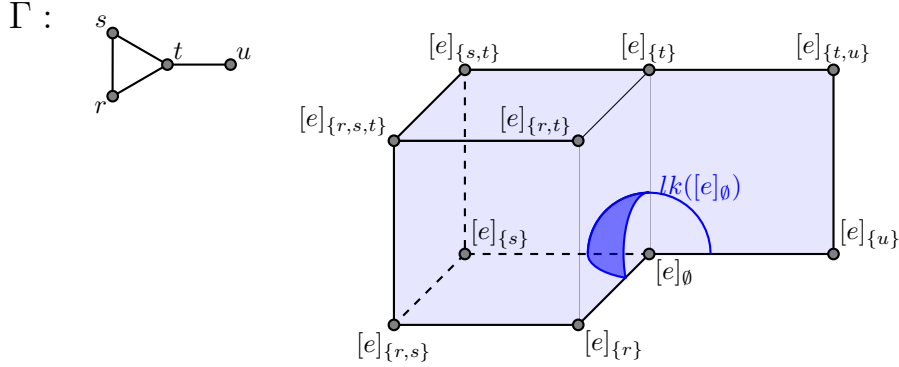
\begin{figure}
		
		\begin{tikzpicture}[thick, scale=.7]
		
		
		\begin{scope}[scale=1.2,shift={(-5,3)}]
		\draw (160:2.3) node[]{{\LARGE $\G:$}};
		
		\draw (0,0)--(1,0);
		\draw[rotate =150] (0,0)--(1,0)--(1/2,0.87)--(0,0);
		
		\draw (0,0) node[vertex,label={above right: $t$}]{};
		\draw[rotate =150] (1,0) node[vertex,label={above left:$s$}]{};
		\draw (1,0) node[vertex,label={above right:$u$}]{};
		\draw[rotate =210] (1,0) node[vertex,label={below left: $r$}]{};
		
		\end{scope}
		\begin{scope} [scale=3.5, shift={(.9,0)}]
		
		\coordinate (E) at (0,0,0);
		\coordinate (D) at (1,0,0);
		\coordinate (A) at (0,0,1);
		\coordinate (C) at (0,1,0);
		\coordinate (B) at (-1,0,0);
		\coordinate (CD) at (1,1,0);
		\coordinate (BC) at (-1,1,0);
		\coordinate (AC) at (0,1,1);
		\coordinate (AB) at (-1,0,1);
		\coordinate (ABC) at (-1,1,1);

		\filldraw[fill=blue!10] (E)-- (D)--(CD)--(C)-- cycle;
		\filldraw[fill=blue!10]  (E)--(B)--(BC)--(C); 
		\filldraw[fill=blue!10] (E)--(A)--(AC)-- (C); 
		\filldraw[fill=blue!10] (AC)--(ABC)--(AB)-- (A); 
		\filldraw[fill=blue!10] (C)--(BC)--(ABC)--(AC); 
		\draw [dashed] (E)--(B)--(BC); 
		\draw [dashed] (B)--(AB);
		
		
		\draw (E) node[vertex,label={below right:$\cst{e}{\emptyset}$}]{};
		\draw (A) node[vertex,label={below right: $\cst{e}{\{r\}}$}]{};		
		\draw (B) node[vertex,label={above right: $\cst{e}{\{s\}}$}]{};	
		\draw (C) node[vertex,label={above: $\cst{e}{\{t\}}$}]{};	
		\draw (D) node[vertex,label={above right: $\cst{e}{\{u\}}$}]{};	
		\draw (AB) node[vertex,label={below:$\cst{e}{\{r,s\}}$}]{};
		\draw (AC) node[vertex,label={above left: $\cst{e}{\{r,t\}}$}]{};
		\draw (BC) node[vertex,label={above: $\cst{e}{\{s,t\}}$}]{};	
		\draw (CD) node[vertex,label={above: $\cst{e}{\{t,u\}}$}]{};		
		\draw (ABC) node[vertex,label={above left: $\cst{e}{\{r,s,t\}}$}]{};										
		
		
		\draw [blue, thick,domain=0:180] plot ({1/3*cos(\x)},{ 1/3*sin(\x)},0);
		
		\filldraw[fill opacity=0.5,fill=blue,draw=blue] (0,1/3,0) arc (90:180:1/3)--(-1/3,0,0) arc (180:245:1/3 and .14)-- (0,0,1/3)arc (203:90: 0.14 and 1/3);

		\draw (.28,.27) node[label={[blue,shift={(0,0)}]$lk(\cst{e}{\emptyset})$}]{};

		\end{scope}

		\end{tikzpicture}
		\caption{ The fundamental domain  for the complex $\DGP$ where $\G$ is as shown with labels such that the $A_{\{r,s,t\}}$ is finite type. Note the 3-cube is filled in. The link of $\cst{e}{\emptyset}$ is shown in blue.} 
		
		\label{fig:Example link}
	\end{figure}

\begin{defn}\label{def:fundamentaldomain}
	We define the \emph{fundamental domain} of~$\DGP$ to be the finite subcomplex~$F_0$ consisting of all the cubes spanned by the cosets of the form $\cst{e}{T}$ for some $T\in \SF$.
\end{defn}

The entire complex $\DGP$ can be built by taking translates of this subcomplex by elements in the monoid and identifying vertices of $F_0$ with vertices of $\alpha F_0$ when the corresponding cosets are equal. It is for this reason that we call $F_0$ the fundamental domain of $\DGP$. See \Cref{fig:Example link} for an example of the fundamental domain.

	The lemma below shows that we can view the complex $\DGP$ as a subcomplex of $\DG$.

	\begin{lemma} \label{Lem:Injectivity of cosets} The map $\iota: \DGP \to \DG$ taking $\cst{\alpha}{T}$ to $\alpha A_T$ is injective and
		two vertices in $\DGP$ are connected by an edge if and only if their image in $\DG$ is connected by an edge.
	\end{lemma}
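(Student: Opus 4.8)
The plan is to verify three things in turn: that $\iota$ is well defined on vertices, that it is injective, and that it respects the edge relation in both directions. The central tool throughout is Lemma~\ref{lem:group cosets}, which for $T \in \SF$ identifies the monoid coset $\cst{\alpha}{T}$ with the intersection $\alpha A_T \cap A^+$ of the group coset with the monoid. For well-definedness I first note that a vertex of $\DGP$ is the class $\cst{\alpha}{T}$, so I must check that its image $\alpha A_T$ is independent of the representative $\alpha$. If $\alpha' \in \cst{\alpha}{T}$, then by Lemma~\ref{lem:group cosets} $\alpha' \in \alpha A_T \cap A^+ \subseteq \alpha A_T$, whence $\alpha' A_T = \alpha A_T$, so $\iota$ is well defined.

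Next I would establish injectivity. Suppose $\iota(\cst{\alpha}{T}) = \iota(\cst{\beta}{T'})$, that is $\alpha A_T = \beta A_{T'}$ in $\AG$. A coset of a special subgroup determines that subgroup, since $(\alpha A_T)^{-1}(\alpha A_T) = A_T$, so $A_T = A_{T'}$; and because the assignment $T \mapsto A_T$ is injective for Artin groups --- via the standard retraction $A_\G \to A_T$ that kills the generators outside $T$, or by van der Lek~\cite{VanderLek1983} --- we conclude $T = T'$. Then $\alpha A_T = \beta A_T$ forces $\beta \in \alpha A_T \cap A^+ = \cst{\alpha}{T}$, hence $\cst{\beta}{T} = \cst{\alpha}{T}$ and the two vertices coincide.

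Finally I would treat the edge relation. In each complex an edge joins two cosets precisely when they are comparable under inclusion and the labelling subsets differ by a single generator: an edge of $\DGP$ is an interval $[\cst{\gamma}{T_1}, \cst{\gamma}{T_2}]$ with $T_1 \subset T_2$ and $|T_2 \smallsetminus T_1| = 1$, and similarly for $\DG$. The forward implication is immediate, since $\iota$ preserves both inclusions and the labelling set, sending such an interval to the corresponding edge $[\gamma A_{T_1}, \gamma A_{T_2}]$ of $\DG$. For the converse, suppose $\alpha A_{T_1}$ and $\beta A_{T_2}$ span an edge of $\DG$, so after relabelling $\alpha A_{T_1} \subset \beta A_{T_2}$ with $T_1 \subset T_2$ and $|T_2 \smallsetminus T_1| = 1$. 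Then $\alpha \in \beta A_{T_2}$ gives $\alpha A_{T_2} = \beta A_{T_2}$, so by the injectivity just proved $\cst{\alpha}{T_2} = \cst{\beta}{T_2}$; and Lemma~\ref{lem:cosets}(1) yields $\cst{\alpha}{T_1} \subset \cst{\alpha}{T_2} = \cst{\beta}{T_2}$, which is exactly an edge of $\DGP$.

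I expect the main obstacle to be the injectivity step, and more precisely the passage between monoid and group cosets supplied by Lemma~\ref{lem:group cosets}. This is where the genuinely nontrivial input enters: Paris's theorem that $A^+$ injects into $\AG$, together with the Garside decomposition $g = cd^{-1}$ with $c,d \in A_T^+$ that is available exactly because $T \in \SF$. Once that identification and the injectivity of $T \mapsto A_T$ are in hand, the subtle backward direction of the edge condition --- where one might a priori fear that comparable group cosets fail to lift to comparable monoid cosets --- collapses, via the intermediate equality $\alpha A_{T_2} = \beta A_{T_2}$, into injectivity plus Lemma~\ref{lem:cosets}(1), with no residual difficulty.
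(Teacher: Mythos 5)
Your argument is correct in substance and rests on the same two pillars as the paper's own proof: Lemma~\ref{lem:group cosets} (the identification $\cst{\alpha}{T}=\alpha A_T\cap \AG^+$ for $T\in\SF$) and Lemma~\ref{lem:cosets}(1). The paper compresses everything into the single equivalence $\cst{\alpha}{T}\subseteq\cst{\beta}{R}\iff \alpha A_T\subseteq \beta A_R$: the direction $\Leftarrow$ by intersecting with $\AG^+$ via Lemma~\ref{lem:group cosets}, the direction $\Rightarrow$ by noting $\alpha\in\cst{\beta}{R}$ forces $\cst{\alpha}{R}=\cst{\beta}{R}$ and then invoking Lemma~\ref{lem:cosets}(1) to get $T\subseteq R$. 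Injectivity and the two-way edge correspondence then fall out simultaneously; in particular, injectivity needs no analysis of when $A_T=A_{T'}$, since from $\alpha A_T=\beta A_{T'}$ one simply intersects both sides with $\AG^+$ and applies Lemma~\ref{lem:group cosets}. Your route through first pinning down $T=T'$ is valid but longer than necessary, and your treatment of the converse edge direction (via $\alpha A_{T_2}=\beta A_{T_2}$, then injectivity, then Lemma~\ref{lem:cosets}(1)) is a correct unpacking of what the paper leaves implicit.

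One genuine error in a justification, though it does not sink the proof: there is in general \emph{no} retraction $\AG\to A_T$ killing the generators outside $T$. If $m_{st}$ is odd with $s\in T$ and $t\notin T$, the defining relation $sts\cdots = tst\cdots$ (both sides of length $m_{st}$) would map to $s^{(m_{st}+1)/2}=s^{(m_{st}-1)/2}$, i.e.\ $s=e$, which is false in $A_T$ since $s$ has infinite order; such retractions exist only when no odd-labelled edge joins $T$ to its complement (e.g.\ in the right-angled or even-labelled cases). Fortunately your parenthetical alternative is sound: by van der Lek~\cite{VanderLek1983} one has $A_{T_1}\cap A_{T_2}=A_{T_1\cap T_2}$, and since each generator is nontrivial, $A_T=A_{T'}$ forces $T=T'$. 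Delete the retraction clause and your argument stands as written.
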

	
	\begin{proof}  For $T, R \in \SF$ and~$\alpha, \beta \in A_\G^+$,  it follows from Lemmas \ref{lem:cosets} and \ref{lem:group cosets} that $\cst{\alpha}{T} \subseteq \cst{\beta}{R}$ if and only if $\alpha A_T \subseteq \beta A_R$.  Two such cosets span an edge in $\DGP$ (respectively $\DG$) precisely when $|R \smallsetminus T|=1$. 
	\end{proof}

\section{Contractibility for arbitrary Artin groups} \label{Section:Contractibility}

In this section we prove that the monoid Deligne complex is contractible for \emph{any} Artin monoid. 

\begin{theorem}\label{thm:contactible}
	Let~$A_\G^+$ be an arbitrary Artin monoid. Then the cube complex~$\DGP$ is contractible.
\end{theorem}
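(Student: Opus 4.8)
The plan is to build $\DGP$ up one monoid-translate of the fundamental domain $F_0$ (Definition~\ref{def:fundamentaldomain}) at a time, showing that the homotopy type never changes. Fix an enumeration $e=\alpha_0,\alpha_1,\alpha_2,\dots$ of $\AG^+$ by non-decreasing length $l$, and set $X_k=\bigcup_{i\le k}\alpha_i F_0$, where $\alpha F_0$ is the image of $F_0$ under left multiplication by $\alpha$ — an isomorphism of cube complexes onto its image, so each $\alpha_i F_0\cong F_0$. Since $\DGP=\bigcup_k X_k$ and every compact set (in particular the image of any sphere) lies in some $X_k$, it suffices to prove each $X_k$ is contractible: then every map $S^n\to\DGP$ is nullhomotopic, so $\DGP$ has trivial homotopy groups and is contractible by Whitehead's theorem. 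All the work is in the inductive step $X_k\simeq X_{k-1}$, with base case $X_0=F_0$.

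The crux is to identify the overlap $\alpha_k F_0\cap X_{k-1}$. By Lemma~\ref{lem:representative} a vertex $\cst{\alpha_k}{T}$ has minimal-length representative $\mr{\alpha_k}{T}$, a left divisor of $\alpha_k$; by cancellation this either equals $\alpha_k$ (exactly when $\End{\alpha_k}{T}=e$, i.e.\ when $T\cap T_{\alpha_k}=\emptyset$) or is strictly shorter, in which case it is some $\alpha_j$ with $j<k$ and the vertex already lies in $X_{k-1}$. I would then check that a cube of $\alpha_k F_0$ lies in $X_{k-1}$ \emph{precisely} when all its vertices do, i.e.\ when its minimal subset $T_1$ satisfies $T_1\cap T_{\alpha_k}\ne\emptyset$; the nontrivial direction uses that $\alpha_k\approx_{T_1}\alpha_j$ forces $\alpha_k\approx_T\alpha_j$ for every $T\supseteq T_1$, so the whole cube is carried into $\alpha_j F_0$. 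Transporting along $\alpha_k F_0\cong F_0$, the overlap is thus the subcomplex $Z_U\subseteq F_0$ spanned by cubes $[\cst{e}{T_1},\cst{e}{T_2}]$ with $T_1\cap U\ne\emptyset$, where $U=T_{\alpha_k}$. The decisive point is that $U\in\SF$ by Lemma~\ref{lem:endgenfinitetype}, so \emph{every} subset of $U$ is finite type.

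The heart of the proof is contractibility of $F_0$, $\alpha_k F_0$, and the overlap $Z_U$. For $F_0$ this is immediate: it is the cube complex of the poset $\SF$, which has minimum $\emptyset$, hence is star-shaped about $\cst{e}{\emptyset}$ (every cube is a face of some $[\cst{e}{\emptyset},\cst{e}{T}]$). I expect the main obstacle to be $Z_U$, which avoids this cone point. Here I would cover $Z_U$ by the subcomplexes $S_u$ for $u\in U$, where $S_u$ is spanned by the cubes $[\cst{e}{T_1},\cst{e}{T_2}]$ with $u\in T_1$. Each $S_u$ is the cube complex of $\{T\in\SF:u\in T\}$, which has minimum $\{u\}$ and so is contractible by the same star-shaped argument; more generally, for any $I\subseteq U$ the intersection $\bigcap_{u\in I}S_u$ is the cube complex of $\{T\in\SF:I\subseteq T\}$, which is nonempty (it contains $\cst{e}{I}$, as $I\subseteq U$ is finite type) and contractible. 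Since every finite intersection of the $S_u$ is nonempty and contractible, the Nerve Lemma gives $Z_U\simeq\mathcal{N}(\{S_u\}_{u\in U})$, and this nerve is the full simplex on $U$ — again because all subsets of $U$ are finite type — hence contractible. Therefore $Z_U$ is contractible.

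Finally, $X_k=X_{k-1}\cup_{Z_U}\alpha_k F_0$ is a union of two contractible complexes glued along the contractible subcomplex $Z_U$ via cofibrations, so the gluing lemma makes $X_{k-1}\hookrightarrow X_k$ a homotopy equivalence and $X_k$ contractible. By induction every $X_k$ is contractible, and the colimit argument of the first paragraph finishes the proof. The one delicate step to get right is the combinatorial identification of $\alpha_k F_0\cap X_{k-1}$ as exactly $Z_U$; everything downstream is a formal consequence of feeding the finite-type hypothesis on $T_{\alpha_k}$ into the Nerve Lemma.
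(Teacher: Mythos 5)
Your proposal is correct, and its skeleton is the same as the paper's: filter $\DGP$ by length of the translating monoid element, identify the overlap of a new translate $\alpha F_0$ with the earlier part via the set $T_\alpha$ of right-divisor generators, prove that overlap contractible using the crucial fact $T_\alpha\in\SF$ (\Cref{lem:endgenfinitetype}), and conclude by gluing plus a colimit argument. Where you genuinely diverge is in the two supporting steps. First, the paper's \Cref{prop:unionofchambers} treats a whole length level at once and needs a separate part (b) showing that two distinct same-length translates intersect inside the previous level; your one-translate-at-a-time enumeration folds that into the overlap identification (your argument that $\mr{\alpha_k}{T}$ is either $\alpha_k$ or strictly shorter, hence an earlier $\alpha_j$, is exactly the paper's minimal-representative argument from \Cref{lem:representative}). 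Second, and more substantively, the paper proves contractibility of the overlap $Y$ by an explicit projection $p\colon \cst{\alpha}{R}\mapsto \cst{\alpha}{R\cap T_\alpha}$, checked cube-by-cube to be a deformation retraction onto the cone $Y_0$ with apex $\cst{\alpha}{T_\alpha}$; you instead cover $Z_U$ by the star-shaped subcomplexes $S_u$, $u\in U=T_{\alpha_k}$, observe that every intersection $\bigcap_{u\in I}S_u$ is a nonempty cone on $\cst{e}{I}$ (using that $\SF$ is closed under subsets, so $I\in\SF$ whenever $I\subseteq U$), and invoke the Nerve Lemma to get $Z_U\simeq \Delta^{|U|-1}$. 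Your route is slicker and avoids verifying that the cubewise retractions glue along faces, at the cost of importing the nerve theorem for closed covers by subcomplexes (Björner's version, since your cover is not open); the paper's retraction is more elementary, self-contained, and hands you an explicit homotopy. Two small points you should make explicit: for $k\geq 1$ one has $\alpha_k\neq e$, hence $U=T_{\alpha_k}\neq\emptyset$ and $Z_U\neq\emptyset$ (otherwise the gluing step would produce a disconnected union rather than a contractible one), and the downward-closedness of $\SF$ that underpins your star-shapedness claims deserves a sentence, though the paper relies on it implicitly as well.
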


Recall the definition of the fundamental domain~$F_0$ from Definition \ref{def:fundamentaldomain}. To show that $\DG^+$ is contractible, we construct it inductively, starting with the fundamental domain and adding translates of~$F_0$ by monoid elements of specific lengths. At each stage we prove contractibility. We make this inductive procedure precise below.

\begin{defn}
	Let~$\alpha\in A_\G^+$, and denote by~$\alpha F_0$ the translate of the fundamental domain~$F_0$ under left multiplication by~$\alpha$, that is,~$\alpha F_0$ consists of all the cubes spanned by cosets of the form $\cst{\alpha}{T}$ for some $T\in \SF$.
	For~$k\geq 0$, we define $D^+_k$ to be the union of the subcomplexes $\alpha F_0$ for all $\alpha$ with length $l(\alpha) \leq k$.
	\[
	D^+_k=\bigcup_{l(\alpha)\leq k} \alpha F_0.
	\]
\end{defn}

We state the following proposition, and prove Theorem~\ref{thm:contactible} assuming this proposition to be true. We then finish this section by proving the proposition.

\begin{proposition}\label{prop:unionofchambers}
	Let $\alpha \in \AG^+$ with $l(\alpha)=k$.  Then 
	\begin{enumerate}[(a)]
		\item \label{unionofchambsA} $\alpha F_0 \cap D^+_{k-1}$ is non-empty and contractible, and
		\item \label{unionofchambsB} for any $\beta\neq \alpha \in A^+_\G$ with $l(\beta)=k$, $\alpha F_0 \cap \beta F_0$ is contained in $D^+_{k-1}$.
	\end{enumerate}  
\end{proposition}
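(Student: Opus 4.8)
The plan is to first pin down exactly which cells of $\alpha F_0$ lie in $D^+_{k-1}$, and then prove both statements by a bookkeeping argument for (b) and a geometric deformation retraction for (a). Throughout I write $R = T_\alpha$, which is finite type by \Cref{lem:endgenfinitetype} and is non-empty since $l(\alpha)=k\ge 1$. The key bookkeeping observation I would establish is: a vertex $\cst{\alpha}{T}$ lies in $D^+_{k-1}$ if and only if $T\cap R\ne\emptyset$. Indeed, by \Cref{lem:representative} the unique shortest representative of $\cst{\alpha}{T}$ is $\mr{\alpha}{T}$, and $\alpha=\mr{\alpha}{T}\End{\alpha}{T}$ gives $l(\mr{\alpha}{T})=k-l(\End{\alpha}{T})$; now $\End{\alpha}{T}\ne e$ exactly when some $s\in T$ right-divides $\alpha$, i.e. when $T\cap R\ne\emptyset$, so the shortest representative has length $\le k-1$ precisely in that case. (One uses \Cref{Lem:Injectivity of cosets} to pass to $\DG$, so that the coset $\cst{\alpha}{T}$ determines $T$ and no shorter representative can arise from a different subset.) Since every cube of $\alpha F_0$ is an interval $[\cst{\alpha}{T_1},\cst{\alpha}{T_2}]$ with minimal vertex $\cst{\alpha}{T_1}$, and $\cst{\alpha}{T}=\cst{\mr{\alpha}{T_1}}{T}$ for all $T\supseteq T_1$, such a cube lies in $\mr{\alpha}{T_1}F_0\subseteq D^+_{k-1}$ exactly when $T_1\cap R\ne\emptyset$. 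Thus $\alpha F_0\cap D^+_{k-1}$ is precisely the full subcomplex $K$ of $\alpha F_0$ spanned by the vertices $\cst{\alpha}{T}$ with $T\cap R\ne\emptyset$.

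For part (b), a vertex of $\alpha F_0\cap\beta F_0$ is a coset equal to both $\cst{\alpha}{T}$ and $\cst{\beta}{T'}$. Applying \Cref{Lem:Injectivity of cosets} together with the standard fact that $gA_T=hA_{T'}$ forces $T=T'$, I get $\cst{\alpha}{T}=\cst{\beta}{T}$ with $\alpha\ne\beta$. If this vertex were not in $D^+_{k-1}$, then $\mr{\alpha}{T}$ would have length $k$, forcing $\End{\alpha}{T}=e$ and $\alpha=\mr{\alpha}{T}$; the same reasoning gives $\beta=\mr{\beta}{T}=\mr{\alpha}{T}=\alpha$, contradicting $\alpha\ne\beta$. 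Hence every vertex, and so (by the minimal-vertex argument above) every cube, of $\alpha F_0\cap\beta F_0$ lies in $D^+_{k-1}$.

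For the contractibility in part (a) I would realise $\alpha F_0$ as a subcomplex of the cube $[0,1]^S$: since $\SF$ is closed under passing to subsets, sending $\cst{\alpha}{T}$ to the indicator vector $\chi_T\in\{0,1\}^S$ identifies $\alpha F_0$ with the subcomplex of $[0,1]^S$ of all faces whose support lies in $\SF$. Under this identification $K$ becomes $\{x : x_s=1\text{ for some }s\in R\}$ and $v_R:=\cst{\alpha}{R}$ becomes $\chi_R$ (which shows $K\ne\emptyset$). I would then retract $K$ onto $v_R$ in two straight-line stages. The first homotopy fixes the coordinates indexed by $R$ and linearly scales the remaining coordinates to $0$; this only shrinks supports, so it stays inside the subset-closed complex $\alpha F_0$, it preserves the coordinate witnessing membership in $K$, and it deformation retracts $K$ onto $K_0:=K\cap[0,1]^R$, the union of the facets $\{x_s=1\}$, $s\in R$, of $[0,1]^R$. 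The second homotopy is the straight-line contraction of $[0,1]^R$ to $\chi_R$: each facet $\{x_s=1\}$ is convex and contains $\chi_R$, so $K_0$ is star-shaped about $\chi_R$ and the contraction stays inside $K_0$. Composing the two stages contracts $K$ to $v_R$.

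The main obstacle is exactly the contractibility of $K$ in part (a): one cannot simply cone $K$ off to $\cst{\alpha}{T_\alpha}$, because $\SF$ is \emph{not} closed under unions, so the naive straight-line homotopy toward $\chi_R$ would pass through points with support $R\cup\operatorname{supp}(x)$, which need not be finite type, and would therefore leave $\alpha F_0$. Splitting the contraction into a support-\emph{decreasing} stage (safe inside the subset-closed $\SF$) followed by a contraction carried out entirely within the single finite-type set $R$ is what circumvents this, and getting this order of operations right is the crux of the argument.
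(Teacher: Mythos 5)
Your proposal is correct and takes essentially the same route as the paper's proof: the same characterization of $\alpha F_0 \cap D^+_{k-1}$ as the subcomplex of cubes whose minimal vertex $\cst{\alpha}{T_1}$ satisfies $T_1 \cap T_\alpha \neq \emptyset$ (via unique minimal coset representatives), the same length/uniqueness argument for part (b), and the same two-stage contraction for part (a) --- your stage-one homotopy realizes exactly the paper's projection $\cst{\alpha}{R} \mapsto \cst{\alpha}{R \cap T_\alpha}$, and your $K_0$ coincides with the paper's $Y_0$, contracted to the same cone point $\cst{\alpha}{T_\alpha}$. The only difference is presentational: you embed $\alpha F_0$ in $[0,1]^S$ and write global straight-line homotopies, which makes the continuity and face-compatibility checks automatic, whereas the paper builds the retraction cube-by-cube and glues; your closing observation that $\SF$ is subset-closed but not union-closed is precisely the subtlety the paper's two-stage argument is designed to handle.
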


\begin{proof}[Proof of Theorem \ref{thm:contactible}]
	The proof is by induction on~$k$, noting that~$\DGP=\lim_{k\to \infty}D^+_k$. For the base case~$k=0$, we note that~$D_0^+=F_0$ so we must prove the fundamental domain is contractible. $F_0$ has a cone point, since~$\cst{e}{\emptyset}$ is a subset of every other coset in~$F_0$. Therefore~$F_0$ is contractible, and this proves the base case. 
	We assume, for our inductive hypothesis, that~$D^+_{k-1}\simeq \ast$. We consider~$D^+_k$ and show that, assuming Proposition~\ref{prop:unionofchambers}, this space is also contractible. Let~$\alpha \in A^+_\G$ satisfy~$l(\alpha)=k$. Then, using a similar argument as in the base case, the translate of the fundamental domain~$\alpha F_0$ is contractible. It follows from Proposition~\ref{prop:unionofchambers}~(\ref{unionofchambsA}) that~$D_{k-1}^+\cup \alpha F_0\cong \ast$ since this is the union of two contractible subcomplexes with (non-empty) contractible intersection. 
	
	Suppose~$\beta\neq \alpha \in A^+_\G$ with~$l(\beta)=k$.  Then  by Proposition~\ref{prop:unionofchambers}~(\ref{unionofchambsB}), the intersection~$\alpha F_0 \cap \beta F_0$ lies in~$D^+_{k-1}$, so  $(D_{k-1}^+\cup \alpha F_0) \cap \beta F_0= D_{k-1}^+ \cap \beta F_0$ which is contractible by part~(\ref{unionofchambsA}).  Thus as before, $(D_{k-1}^+\cup \alpha F_0)$ and $\beta F_0$ are two contractible spaces with contractible intersection, so ~$(D_{k-1}^+\cup \alpha F_0)\cup \beta F_0\cong \ast$.  Iterating this argument over all~$\gamma \in A^+_\G$ with~$l(\gamma)=k$, it follows that
	\[
	D^+_k=D^+_{k-1} \bigcup_{l(\gamma)=k} \gamma F_0
	\]
	is contractible, as required. This completes the induction, and thus the proof.
\end{proof}

\begin{proof}[Proof of Proposition \ref{prop:unionofchambers}]
	We first prove statement~(\ref{unionofchambsB}). 
	If~$\alpha F_0 \cap \beta F_0\neq\emptyset$ then there exist~$T_1$ and ~$T_2$ in~$\SF$ such that~$\cst{\alpha}{T_1}=\cst{\beta}{T_2}$. Applying Lemma~$\ref{lem:cosets}$ in both directions gives that~$T_1=T_2=T$. It is shown in Lemma~\ref{lem:representative} that any coset $\cst{\alpha}{T}$ has a unique shortest element, namely $\bar\alpha_T$.  So if $\cst{\alpha}{T} = \cst{\beta}{T}$ and $l(\alpha)=l(\beta)=k$, then either $\alpha = \beta$ (a contradiction) or the length of $\bar\alpha_T$ is strictly less than~$k$. In the latter case it follows that~$\cst{\alpha}{T}=\cst{\bar\alpha}{T}\in D^+_{k-1}$.
	
	For statement~(\ref{unionofchambsA}), note that a vertex corresponding to the coset $\cst{\alpha}{T}$ lies in $\alpha F_0 \cap D^+_{k-1}$ if and only if $\bar\alpha_T$ has length less that $k$, in which case $\alpha=\bar\alpha_T \rho$ for some non-trivial element $\rho \in A^+_T$.  In particular, for some~$t\in T$, $\alpha \sgeq_R t$. 
	Recall that $T_\alpha$ denotes the subset of generators~$s$ such that $\alpha\sgeq_R s$, and by Lemma~\ref{lem:endgenfinitetype}, $T_\alpha\in \SF$.  The observation above can now be stated as follows:  $\cst{\alpha}{T}$ lies in $\alpha F_0 \cap D^+_{k-1}$ if and only if $T \cap T_\alpha \neq \emptyset$.  In particular $\alpha F_0 \cap D^+_{k-1}$ is non-empty, since~$\alpha\neq e$ and so for at least one~$s\in S$,~$s\in T_\alpha$ and it follows that~$\cst{\alpha}{s} \in D^+_{k-1}$.
	
	Let $Y$  be the subcomplex of~$\alpha F_0$ spanned by the cosets $\cst{\alpha}{T}$ such that $T \cap T_\alpha \neq \emptyset$. Then statement~(\ref{unionofchambsA}) is equivalent to showing that~$Y$ is contractible.  Let $Y_0$ be the subcomplex of~$Y$ spanned by $\cst{\alpha}{T}$ such that $T \subseteq T_\alpha$.  Then $Y_0$ is contractible since it contains a maximal element $\cst{\alpha}{T_{\alpha}}$.  Define a projection map 
	\[ p\colon Y \to Y_0, \,\, \cst{\alpha}{R} \mapsto \cst{\alpha}{R \cap T_{\alpha}}.\]  
	We claim that $p$ is a deformation retraction and hence $Y$ is also contractible.  To see this, let $\mathcal R$ denote the poset of subsets $R\in \SF$ with $R \cap T_\alpha \neq \emptyset$.  Then cubes in $Y$ are in one-to-one correspondence with intervals $I=[R_1,R_2]$ in $\mathcal R$.  This cube, together with its projection $p(I)=[R_1 \cap T_\alpha, R_2 \cap T_\alpha]$ spans a larger cube  $C_I= [R_1 \cap T_\alpha, R_2]$, since~$R_1\subset R_2$ implies~$R_1\cap T_\alpha \subset R_2$.   Then the restriction of $p$ to the cube $C_I$ is a deformation retraction of $C_I$ onto $C_I \cap Y_0$ (see \Cref{fig:DeformRetract}).  Moreover, if $I'=[R_2,R_3] \subset I$ is a subinterval, and $C_{I'} = [R_2 \cap T_\alpha, R_3]$ is the corresponding face of $C_I$, then $p|_{C_I}$ restricts to the corresponding deformation retraction of $C_{I'}$ onto $C_{I'} \cap Y_0$, i.e. $(p|_{C_I})|_{C_{I'}}=p|_{C_{I'}}$.  It follows that the restrictions of~$p$ to all such cubes~$C_I$ for~$I\in \mathcal{T}$ glue together along common faces to give the desired deformation retraction of $Y$ onto $Y_0$. 	
\end{proof} 

	\begin{figure}
		\tdplotsetmaincoords{70}{90}
		\tdplotsetrotatedcoords{60}{0}{60}
		\begin{tikzpicture}[scale=2.5, tdplot_rotated_coords]
		
		

		\coordinate(E) at (0,0,0);
		\coordinate(N) at (0,0,1);
		\coordinate(R1) at (0,1,1);
		\coordinate(R2) at (1,0,1);
		\coordinate(RA) at (1,1,1);
		\coordinate(T1) at (0,1,0);
		\coordinate(T2) at (1,0,0);
		\coordinate(TA) at (1,1,0);
		
		\draw (E)-- (T2)--(TA)--(T1)-- cycle;
		\filldraw[fill=green!10] (T2)-- (R2)--(RA)--(TA)-- cycle;
		\filldraw[fill=green!10] (R1)-- (RA)--(TA)--(T1)-- cycle;
		\draw[very thick, blue] (T2)--(TA)--(T1);

		\draw (E)  node[vertex,label={below left:$\cst{\alpha}{\emptyset}$}]{};
		\draw (R1) node[vertex,green!50!black,label={above left:$\cst{\alpha}{R_1}$}]{};
		\draw (RA) node[vertex,green!50!black,label={above right:$\cst{\alpha}{R_3}$}]{};
		\draw (R2) node[vertex,green!50!black,label={above right:$\cst{\alpha}{R_2}$}]{};
		\draw (T1) node[vertex, blue, label={below left:$\cst{\alpha}{R_1\cap T_\alpha}$}]{};
		\draw (T2) node[vertex,blue,label={below right:$\cst{\alpha}{R_2\cap T_\alpha}$}]{};
		\draw (TA) node[vertex,blue,label={below:$\cst{\alpha}{T_\alpha}$}]{};
		
		\draw[->,thick, dashed, green!50!black] (0.3,1,1)--(0.3,1,0.53);
		\draw[->,thick, dashed, green!50!black] (0.3,1,0.47)--(0.3,1,0);
		\draw[->,thick, dashed, green!50!black] (0.6,1,1)--(0.6,1,0.53);
		\draw[->,thick, dashed, green!50!black] (0.6,1,0.47)--(0.6,1,0.01);
		
		\draw[->,thick, dashed, green!50!black] (1,0.3,1)--(1,0.3,0.53);
		\draw[->,thick, dashed, green!50!black] (1,0.3,0.47)--(1,0.3,0.02);
		\draw[->,thick, dashed, green!50!black] (1,0.6,1)--(1,0.6,0.53);
		\draw[->,thick, dashed, green!50!black] (1,0.6,0.47)--(1,0.6,0.03);
		
		\end{tikzpicture}
		
		\caption{An example of the deformation retraction described in the proof of \Cref{prop:unionofchambers}. In this example we assume that $T_\alpha\subset R_3$. The subcomplex $Y$, shown in green, first retracts along the arrows to the subset $Y_0$ in blue, and then the $Y_0$ retracts to the cone point $\cst{\alpha}{T_\alpha}$. 
		}
		\label{fig:DeformRetract}
	\end{figure}
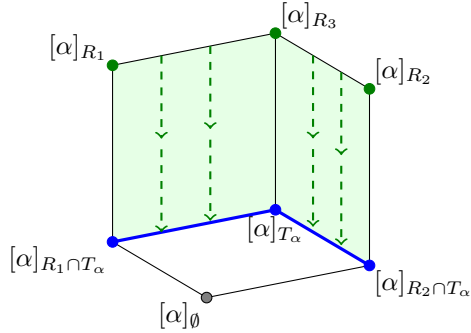 

\section{Convexity and CAT(0) in FC-type case} \label{Section:Convexity}

	In this section we will look at the geometric relationship between the monoid Deligne complex, $\DGP$ and the Deligne complex, $\DG$. By \Cref{Lem:Injectivity of cosets}, we may view $\DGP$ as a subcomplex of $\DG$.
	
	Our main goal in this section is to prove the following theorem.
	
	\begin{theorem}  \label{Thrm:Local Convexity}  The inclusion map $\iota: \DGP \to \DG$ is a locally isometric embedding.  
	\end{theorem}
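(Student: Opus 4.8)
The plan is to reduce the statement to a condition on links and then to verify that condition combinatorially. Recall from \Cref{Subsection:CAT(0)} that a path through a point $x$ of a cube complex is locally geodesic at $x$ precisely when its incoming and outgoing directions lie at distance at least $\pi$ in the (all-right, piecewise spherical) link of $x$. Consequently, to prove that $\iota$ is a local isometry it suffices to show that for every vertex $v$ of $\DGP$ the induced inclusion of links $\lkp{v}\hookrightarrow\lk{\iota(v)}$ is distance preserving up to $\pi$; that is, $\min\{\pi,d(u,w)\}$ is the same whether computed in $\lkp{v}$ or in $\lk{\iota(v)}$, for all vertices $u,w$ of $\lkp{v}$. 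It is enough to check vertices of $\DGP$, since the link of an interior point of a higher cube is a spherical join built from vertex links, and the metric condition is inherited by such joins. Equivalently, $\lkp{v}$ must be a full subcomplex of $\lk{\iota(v)}$ which is moreover $\pi$-convex, meaning that no geodesic of length $<\pi$ between two of its vertices leaves it.

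First I would decompose each link as a spherical join of an ascending and a descending part. A cube of $\DG$ containing $\alpha A_T$ is an interval $[g_1A_{T_1},g_2A_{T_2}]$ with $g_1A_{T_1}\subseteq \alpha A_T\subseteq g_2A_{T_2}$; the choice of lower endpoint (a coset properly contained in $\alpha A_T$, so $T_1\subsetneq T$) and of upper endpoint (which must equal $\alpha A_{T_2}$ with $T\subsetneq T_2\in\SF$) are independent. Hence $\lk{\alpha A_T}=L^{\downarrow}\ast L^{\uparrow}$, and likewise $\lkp{\cst{\alpha}{T}}=L^{\downarrow}_+\ast L^{\uparrow}_+$. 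By \Cref{lem:cosets}(1) the ascending factor, namely the nerve of $\{T'\in\SF : T\subsetneq T'\}$, is identical in the two complexes and $\iota$ is the identity on it. Since a spherical join of the identity map with a full, $\pi$-convex, distance-preserving embedding is again such, the theorem reduces to the corresponding statement for the descending factors $L^{\downarrow}_+\hookrightarrow L^{\downarrow}$.

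Next I would identify the descending links. Translating by $\alpha^{-1}$ is an isometry of $\DG$ carrying $\alpha A_T$ to $A_T$, so $L^{\downarrow}$ is the link of the top vertex in the Deligne complex of the \emph{finite type} group $A_T$: its vertices are the proper cosets $hA_{T'}\subsetneq A_T$ with $T'\subsetneq T$, and a set of such cosets spans a simplex exactly when it has a common lower bound. Under the same translation $L^{\downarrow}_+$ becomes the subcomplex spanned by those cosets $hA_{T'}$ for which $\alpha h A_{T'}$ contains a positive element, i.e.\ those arising as $\cst{\beta}{T'}\subseteq\cst{\alpha}{T}$. I would first establish fullness: if descending monoid cosets pairwise span edges, their intersections are controlled by \Cref{lem:cosets}(2) together with the greatest-common-divisor lemma, so a common lower bound in $L^{\downarrow}$ automatically lies in $L^{\downarrow}_+$. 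Because $A_T$ is finite type I can use its Garside structure (the element $\Delta_T$, \Cref{lem:garsideform}, and the lattice property of $\sleq_L$) to pass freely between group cosets below $A_T$ and monoid cosets.

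The main obstacle is the $\pi$-convexity of the descending embedding: showing that a geodesic of length strictly less than $\pi$ in $L^{\downarrow}$ joining two directions of $L^{\downarrow}_+$ cannot pass through cosets outside the monoid link. Fullness alone does not give this, as one must rule out genuine short-cuts through non-positive cosets. I expect to handle it by translating such a short gallery into a chain of cosets in the finite-type group $A_T$ and using the lattice structure, together with the factorisation $\beta=\mr{\beta}{T'}\,\End{\beta}{T'}$, to prove that positivity (membership in a monoid coset) is preserved all along the path; the lcm/gcd computations of \Cref{lem:cosets} and the Garside element $\Delta_T$ are the tools that should make this precise. Once the descending embedding is shown to be full and $\pi$-convex, the join decomposition upgrades it to the statement for the full links, and the link criterion of the first step yields that $\iota$ is a locally isometric embedding.
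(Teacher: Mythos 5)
Your overall strategy matches the paper's: reduce to a condition on vertex links, split each link as a join of ascending and descending parts, and handle the two factors separately; your observation that the ascending factors coincide is correct and is consistent with the paper's \Cref{UpwardLink}. But there are two genuine problems. First, the ``$\pi$-convexity'' you single out as the main obstacle is not an extra condition at all: links in a cube complex are all-right spherical complexes, and in an all-right complex a \emph{full} subcomplex is automatically $\pi$-convex --- a geodesic that leaves it must enter a simplex with a vertex $w$ outside the subcomplex (by fullness), and the maximal subsegment through the open star of $w$ already has length $\pi$. This is precisely the content of the paper's \Cref{Lem:LocalEmbedding} (following Haglund), so fullness alone suffices. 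The misdiagnosis matters because you defer your ``main obstacle'' to an unexecuted plan (``I expect to handle it by\dots''), leaving the argument incomplete exactly where you believed the difficulty lay, while the actual difficulty sits elsewhere.

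Second, and more seriously, the fullness of the descending factor --- the technical heart of the theorem --- is asserted rather than proved. You claim that pairwise edges plus \Cref{lem:cosets}(2) and the gcd lemma make a common lower bound in $L^{\downarrow}$ ``automatically'' lie in $L^{\downarrow}_+$. But \Cref{lem:cosets}(2) only treats cosets $\cst{\alpha}{T_1}$, $\cst{\alpha}{T_2}$ with a \emph{common} representative; the real issue is that two downward vertices $\cst{a_1}{T_1}$, $\cst{a_2}{T_2}$ whose images $a_1A_{T_1}$, $a_2A_{T_2}$ meet in the group may a priori only share elements of the form $c=a_1b_1\Delta_1^{-k}=a_2b_2\Delta_2^{-k}$ with $k>0$, i.e.\ outside the monoid. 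The paper's \Cref{OneSkeleton} resolves this by setting $d=\gcd_L(\Delta_1^{k},\Delta_2^{k})$, writing $\Delta_i^{k}=dz_i$, and using the uniqueness of the representative $z_1^{-1}z_2$ with $\gcd_L(z_1,z_2)=e$ to conclude that $cd$ is positive and lies in $a_1A_{T_1}^+\cap a_2A_{T_2}^+$; nothing in your sketch produces such a positive common element. Moreover, even granting the edge case, fullness for higher simplices requires showing the downward monoid link is \emph{flag}: the paper's \Cref{DownwardLinkFlag} proves that pairwise intersections equal $\cst{\llm(a_i,a_j)}{T_{ij}}$ and that the pairwise least common multiples combine (using that $A_{T_1}^+$ is finite type, so $\llm(\{m_j\})$ exists) to give $\llm(\{a_i\})\in\bigcap_i\cst{a_i}{T_i}$. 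Your proposal omits this step entirely, so what remains is a correct skeleton with its load-bearing computations missing.
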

	
	Before proving the theorem, we discuss some consequences.  While the theorem holds for all Artin groups, it has many additional implications for FC-type Artin groups since in that case, $\DG$ is CAT(0).
	
	\begin{corollary}  \label{Cor:DGPisCAT(0)}
		If $\AG$ is an FC-type Artin group, then the inclusion map $\iota: \DGP \to \DG$ is an isometric embedding, hence $\DGP$ is CAT(0) and its image is convex in $\DG$.
	\end{corollary}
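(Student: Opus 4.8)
The plan is to deduce the corollary from Theorem~\ref{Thrm:Local Convexity} together with the standard CAT(0) facts recorded in \Cref{Subsection:CAT(0)}, using the hypothesis that FC-type is precisely the condition guaranteeing that the cubical metric on $\DG$ is CAT(0). First I would record three preliminary observations: (i) since $\AG$ is FC-type, $\DG$ is CAT(0); (ii) $\DGP$ is a cube complex built from unit Euclidean cubes of dimension at most $|S|$, hence has finitely many isometry types of cells and is a complete geodesic metric space, so geodesics between points exist; and (iii) by Theorem~\ref{Thrm:Local Convexity} the inclusion $\iota$ is a locally isometric embedding, so it carries locally geodesic paths in $\DGP$ to locally geodesic paths in $\DG$ and preserves lengths of paths.

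The core of the argument is to upgrade the local isometry to a global one. Given $x,y \in \DGP$, observation (ii) lets me choose a geodesic $\gamma$ from $x$ to $y$ in $\DGP$. By (iii), $\iota\circ\gamma$ is a locally geodesic path in $\DG$; and since $\DG$ is CAT(0), any locally geodesic path is a (global) geodesic. As $\iota$ preserves lengths, the length of $\iota\circ\gamma$ equals that of $\gamma$, whence
\[
d_{\DG}\bigl(\iota(x),\iota(y)\bigr) = \operatorname{length}(\iota\circ\gamma) = \operatorname{length}(\gamma) = d_{\DGP}(x,y).
\]
Thus $\iota$ is an isometric embedding.

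For convexity and the CAT(0) conclusion I would use uniqueness of geodesics in CAT(0) spaces. Given two points $p=\iota(x)$ and $q=\iota(y)$ in the image, the path $\iota\circ\gamma$ constructed above is a geodesic in $\DG$ from $p$ to $q$, and by uniqueness it is \emph{the} geodesic; since it lies entirely in $\iota(\DGP)$, the image is convex. Finally, a convex subset of a CAT(0) space is itself CAT(0), so $\DGP$, being isometric to the convex subset $\iota(\DGP) \subseteq \DG$, is CAT(0).

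Essentially all of the work is carried by Theorem~\ref{Thrm:Local Convexity}; the present argument is a formal consequence, and the only point requiring genuine care is the passage in (iii) — a geodesic of $\DGP$ maps a priori only to a \emph{locally} geodesic path of $\DG$, and it is exactly the CAT(0) hypothesis on $\DG$ that promotes it to a global geodesic. I expect this to be the one place where the FC-type assumption is indispensable. As a cross-check, an alternative route to the CAT(0) conclusion is available: local isometry identifies each vertex link of $\DGP$ with a full subcomplex of the corresponding (flag) link of $\DG$, which is therefore flag, so $\DGP$ is locally CAT(0) by Gromov's criterion; combined with simple-connectivity, coming from contractibility (Theorem~\ref{thm:contactible}), this again yields that $\DGP$ is globally CAT(0).
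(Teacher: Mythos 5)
Your argument is correct and follows essentially the same route as the paper's proof: use the FC-type hypothesis to get that $\DG$ is CAT(0), promote the locally geodesic image of a $\DGP$-geodesic to a global geodesic, deduce the isometric embedding from length preservation, and obtain convexity and the CAT(0) property of $\DGP$ from uniqueness of geodesics and inheritance by convex subspaces. Your preliminary observation (ii) on existence of geodesics in $\DGP$ and the alternative Gromov-link argument are sensible refinements the paper leaves implicit, but they do not change the approach.
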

	
	Here is a proof of the corollary assuming \Cref{Thrm:Local Convexity}
	\begin{proof}  If $\AG$ is FC-type, then $\DG$ is CAT(0).
		In a CAT(0) space local geodesics are globally geodesic, thus a local isometry $\iota$ takes a geodesic between $x$ and $y$ to a geodesic between $\iota(x)$ and $\iota(y)$.  Since distance is measured by the length of geodesics, it follows that $\iota$ is a (globally) isometric embedding.  Moreover, since geodesics in a CAT(0) space are unique, this implies that the geodesic in $\DG$ between two points in the image of $\iota$ also lies in the image of $\iota$.  That is, $\iota(\DGP)$ is convex in $\DG$.  The CAT(0) condition is inherited by any convex subspace, so we conclude that $\DGP$ is CAT(0).
	\end{proof}
	
	Another important consequence of the above corollary is that if $\AG$ is FC-type, then for any two vertices $x,y$ in $\DGP$, their cubical convex hull lies entirely in $\DGP$,  that is, any minimal length edge path in $\DG$ between $x$ and $y$ remains inside $\DGP$. 
	
	Next consider the action of $A_\G^+$ on $\DGP$.  For any cube complex $X$, a continuous map $g: X \to X$ that takes cubes isometrically to cubes, is distance non-increasing.  In the case of a group action, this is sufficient to show that $G$ acts by isometries since the inverse map $g^{-1}$ is also distance non-increasing.  In the case of a monoid action, this need not be true.  The map $g$ need not be surjective and it may decrease distances.  Indeed, the action of a non-trivial element $\alpha \in A_\G^+$ on $\DGP$ is never surjective (in particular, the image does not contain $\cst{e}{\emptyset}$).  Another consequence of  \Cref{Cor:DGPisCAT(0)}, however,
	is that the action is distance preserving.
	
	\begin{corollary}
		Suppose that $\AG$ is an FC-type Artin group. Then action of $\alpha\in \AG^+$ induces an isometric embedding $\DGP \to \DGP$. 
	\end{corollary}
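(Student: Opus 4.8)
The plan is to exploit the FC-type hypothesis, under which Corollary \ref{Cor:DGPisCAT(0)} guarantees that the inclusion $\iota$ is a \emph{global} isometric embedding, and to transfer the known isometry of the group action on $\DG$ back to the monoid action on $\DGP$ through $\iota$. Write $L_\alpha$ for the map given by left multiplication by $\alpha$ on either complex; the goal is to show that $L_\alpha \colon \DGP \to \DGP$ preserves distances.

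First I would record that $\iota$ is $\alpha$-equivariant. On vertices,
\[
\iota(L_\alpha \cst{\gamma}{T}) = \iota(\cst{\alpha\gamma}{T}) = \alpha\gamma A_T = L_\alpha(\gamma A_T) = L_\alpha(\iota\,\cst{\gamma}{T}),
\]
so $\iota \circ L_\alpha$ and $L_\alpha \circ \iota$ agree on the vertices of $\DGP$. As both are cubical maps sending cubes isometrically onto cubes, they are determined by their values on vertices and hence coincide on all of $\DGP$.

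Next I would invoke the two facts that make the argument run. Because $\alpha \in \AG^+ \subseteq \AG$ and the full group $\AG$ acts on $\DG$ by isometries (as noted after Definition \ref{def:delignecomplex}), the map $L_\alpha \colon \DG \to \DG$ is an isometry. And because $\AG$ is FC-type, Corollary \ref{Cor:DGPisCAT(0)} tells us that $d_{\DG}(\iota x, \iota y) = d_{\DGP}(x,y)$ for all $x,y \in \DGP$. Chaining these together, for any $x,y \in \DGP$ I would compute
\begin{align*}
d_{\DGP}(L_\alpha x, L_\alpha y)
&= d_{\DG}(\iota L_\alpha x, \iota L_\alpha y) = d_{\DG}(L_\alpha \iota x, L_\alpha \iota y) \\
&= d_{\DG}(\iota x, \iota y) = d_{\DGP}(x,y),
\end{align*}
using the isometric embedding at the outer equalities, equivariance in the middle of the first line, and the group isometry at the start of the second. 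Thus $L_\alpha$ is distance-preserving, hence an isometric embedding.

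There is no genuine obstacle to overcome here; all the difficulty has already been absorbed into Corollary \ref{Cor:DGPisCAT(0)}. The point worth emphasising is \emph{why} the FC-type hypothesis is needed: as observed in the paragraph preceding the statement, a cubical self-map of a cube complex is only automatically distance-non-increasing, and for a non-surjective monoid action one cannot appeal to an inverse to upgrade this to an isometry. The global isometry of $\iota$ supplied by the CAT(0) structure of $\DG$ is exactly what lets us bypass this difficulty; with only the \emph{local} isometry of Theorem \ref{Thrm:Local Convexity} available, the first and last equalities above would fail.
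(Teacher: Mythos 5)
Your proof is correct and follows exactly the paper's argument: the paper likewise observes that $\alpha$ acts as an isometry on $\DG$ and combines this with the global isometric embedding of Corollary \ref{Cor:DGPisCAT(0)} to conclude that left multiplication preserves distances in $\DGP$. Your write-up merely makes explicit the equivariance of $\iota$ and the chain of distance equalities that the paper leaves implicit.
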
 
	
	\begin{proof}
		The action of $\alpha$ on $\DG$ induces an isometry, so this fact, combined with \Cref{Cor:DGPisCAT(0)}, shows that translation by $\alpha$ preserve distances between points. 
	\end{proof}

	\subsection{Proof of \ref{Thrm:Local Convexity} }
	
	To prove \Cref{Thrm:Local Convexity}, we will apply the following lemma.  A proof of the lemma can be found in \cite{Haglund2008}, but we include an outline of the proof below for the completeness. 
	
	\begin{definition}  A subcomplex $K$ of a simplicial complex $L$ is said to be a \emph{full subcomplex} if any collection of vertices in $K$ that spans a simplex in $L$, also spans a simplex in $K$.
	\end{definition}
	
	\begin{lemma}\label{Lem:LocalEmbedding}  Let $X$ be a cube complex and $Y \subseteq X$ a subcomplex.  
		Suppose that for every vertex $v \in Y$, the link of $v$ in $Y$ is  full subcomplex of the link of $v$ in $X$.  Then the inclusion map $Y \to X$ is a locally isometric embedding, where the metrics are given by minimal path lengths in~$Y$, respectively~$X$.
	\end{lemma}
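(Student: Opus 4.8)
The plan is to reduce the statement to a purely combinatorial comparison of the links and then exploit the fullness hypothesis there. First I would recall that near any point $x$ of a cube complex the metric is modelled on the Euclidean cone over the link: for small $\epsilon$ the $\epsilon$-ball about $x$ is isometric to the corresponding ball in $C(lk_X(x))$, with tangential directions parametrised by the all-right piecewise-spherical complex $lk_X(x)$ (all edges of length $\tfrac{\pi}{2}$) described in \Cref{Subsection:CAT(0)}. Under $\iota$ this local model sends $C(lk_Y(x))$ into $C(lk_X(x))$ as the cone on the inclusion $lk_Y(x)\hookrightarrow lk_X(x)$. Since in a Euclidean cone the distance between $(r,\xi)$ and $(s,\eta)$ depends only on $r,s$ and on $\min(d(\xi,\eta),\pi)$, the inclusion $\iota$ is a local isometry at $x$ if and only if
\[
\min\bigl(d_{lk_Y(x)}(\xi,\eta),\pi\bigr)=\min\bigl(d_{lk_X(x)}(\xi,\eta),\pi\bigr)\quad\text{for all }\xi,\eta\in lk_Y(x).
\]
Equivalently, via the criterion of \Cref{Subsection:CAT(0)} that a path through $x$ is locally geodesic exactly when its two directions are at distance $\geq\pi$ in $lk(x)$, this says precisely that a local geodesic of $Y$ through $x$ stays a local geodesic of $X$. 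It therefore suffices to verify the displayed link condition at every $x\in Y$; the hypothesis is stated for vertices, but for $x$ interior to a cube the link splits as a spherical join of a sphere of carrier-directions with a relative link, and fullness at vertices propagates to this relative link, so I would reduce to the general point from the vertex case.

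Because any path in the subcomplex $lk_Y(x)$ is a path in $lk_X(x)$, one always has $d_{lk_X(x)}\leq d_{lk_Y(x)}$, and when $d_{lk_X(x)}(\xi,\eta)\geq\pi$ both sides above equal $\pi$. Hence the whole statement reduces to the following combinatorial claim, writing $K=lk_Y(x)$ and $L=lk_X(x)$: \emph{if $K$ is a full subcomplex of the all-right spherical complex $L$, then for $\xi,\eta\in K$ with $d_L(\xi,\eta)<\pi$ one has $d_K(\xi,\eta)=d_L(\xi,\eta)$}, i.e.\ an $L$-geodesic of length $<\pi$ between two points of $K$ remains in $K$. Note that $L$ is \emph{not} assumed flag (that would force $X$ to be CAT(0)), so I cannot appeal to CAT(1) geometry of $L$ directly.

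The model case, which displays exactly how fullness enters, is when the geodesic $\gamma$ lies in a single closed simplex $\sigma$ of $L$. Then the support vertices of $\xi$ and of $\eta$ are vertices of $\sigma$, hence lie in $K$ and are pairwise joined in $L$ (being vertices of the common simplex $\sigma$); by fullness they span a simplex of $K$, namely the face of $\sigma$ carrying both points, and since an all-right spherical simplex is convex for distances $<\pi$, the arc $\gamma$ lies in that face and so in $K$. The general case I would reduce to this via the key support-control statement: \emph{for a geodesic $\gamma$ of length $<\pi$ in an all-right spherical complex, the vertices of $\operatorname{supp}(\gamma(s))$ are contained in $\operatorname{supp}(\gamma(0))\cup\operatorname{supp}(\gamma(\ell))$ for every $s$}. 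Granting this, every support vertex along $\gamma$ lies in $\operatorname{supp}(\xi)\cup\operatorname{supp}(\eta)\subseteq K$; these vertices are pairwise joined (they lie in the single simplex $\operatorname{supp}(\gamma(s))$), so fullness fills in each support simplex and forces $\gamma(s)\in K$ for all $s$, giving $d_K(\xi,\eta)\le d_L(\xi,\eta)$ and hence equality.

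The hard part, and the main obstacle, is precisely this support-control statement. I would prove it by developing $\gamma$ isometrically into a round sphere $\mathbb{S}^n$, where its image is a minimizing great-circle arc expressible as a nonnegative (spherical interpolation) combination of the developed endpoints. The all-right structure means a vertex $w$ satisfies $w\in\operatorname{supp}(y)$ only when $d_L(w,y)<\tfrac{\pi}{2}$, while $w\notin\operatorname{supp}(\xi),\operatorname{supp}(\eta)$ should force $d_L(w,\xi),d_L(w,\eta)\ge\tfrac{\pi}{2}$; the function $s\mapsto\cos d_L(w,\gamma(s))$ then satisfies a spherical comparison (it behaves like the distance to a point along a spherical geodesic) and so cannot dip to a value corresponding to distance $<\tfrac{\pi}{2}$ strictly between two endpoints where it is $\le 0$. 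Making this bookkeeping rigorous across the sequence of simplices $\gamma$ crosses---in particular justifying the distance lower bounds under the intrinsic (not just within-simplex) metric---is the delicate step; once it is in place, the cone-and-cap reduction of the first two paragraphs completes the proof.
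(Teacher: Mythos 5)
Your proposal is sound in outline and, at its crux, runs on the same geometric engine as the paper's proof, but you package it differently and—importantly—you flag as the ``main obstacle'' a step that is in fact the short part. The paper makes the same reductions you do (cone model at a point, truncated link distances, reduction from interior points to vertex links), and then argues by a dichotomy on a minimizing path $\gamma$ in $lk_X(v)$ between two points of $lk_Y(v)$: either $\gamma$ stays in $lk_Y(v)$, in which case the link distances agree, or by fullness it meets the open star of a vertex $w \in lk_X(v) \smallsetminus lk_Y(v)$; taking a maximal subsegment of $\gamma$ whose interior lies in that open star, its endpoints sit at distance exactly $\frac{\pi}{2}$ from $w$, and developing the closed star onto the round sphere with $w$ at the north pole exhibits the subsegment as an equator-to-equator arc through the open upper hemisphere, hence of length $\pi$, contradicting $d_{lk_X(v)} < \pi$. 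This is exactly your support-control statement specialized to $w \notin lk_Y(v)$, which is all the fullness argument needs: you prove more (containment of every carrier along $\gamma$ in $\operatorname{supp}(\xi)\cup\operatorname{supp}(\eta)$), but only membership of the carrier vertices in $lk_Y(v)$ is used, and the paper's dichotomy bypasses the extra bookkeeping entirely. Moreover, the ``delicate'' step you defer dissolves on inspection: the bound $d(w,\zeta)\geq \frac{\pi}{2}$ for $\zeta$ outside the open star of $w$ follows from a first-exit argument, since the closed star of a vertex in an all-right complex is a metric cone of radius $\frac{\pi}{2}$ over $lk(w)$ irrespective of any flag or CAT(1) hypothesis (your observation that $L$ need not be flag is correct, and both arguments respect it); and your comparison function $s\mapsto \cos d(w,\gamma(s))$ only needs to be analyzed on subsegments inside the closed star, where the cone structure makes it exactly spherical—no development across the whole sequence of simplices is required. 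One shared gloss: both your argument and the paper's tacitly assume a minimizing path in the link exists, which holds by Bridson's theorem since these links have finitely many isometry types of cells. So: same key idea, a stronger intermediate lemma than necessary on your side, and a deferred step that is completable by precisely the paper's two-line spherical development.
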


	\begin{proof} Here is  sketch of the proof.  To be a local geodesic in a cube complex, a path $p$ must first be piecewise linear, that is $p=p_1p_2 \dots p_k$ where each $p_i$ is a straight line lying in a single cube.  In addition, at the point $x_i$ where $p_i$ meets $p_{i+1}$, the tangent vectors to these two segments must have distance at least $\pi$ in the link of $x_i$.    Thus to show that any path that is locally geodesic in $Y$ is also locally geodesic in $X$, we must show that for any point $y \in Y$, two points in $lk_Y(y)$ of distance $\geq \pi$, are also of distance $\geq \pi$ in $lk_X(y)$.  
		
		By standard arguments, one can reduce to checking the case where $y=v$ is a vertex in $Y$.  Let $\ell_1, \ell_2$ be two points in $lk_Y(v) \subseteq lk_X(v)$.  The distance between them measured in $lk_X(v)$, is the minimal length of a path $\gamma$ connecting these two points.  If such a path $\gamma$ lies entirely in $lk_Y(v)$, then the distance from $\ell_1$ to $\ell_2$  in $lk_Y(v)$ is equal to their distance in $lk_X(v)$. If $\gamma$ exits $lk_Y(v)$, then the fact that $lk_Y(v)$ is a full subcomplex of $lk_X(v)$, means that it enters a simplex containing a vertex $w$ in $lk_X(v) \smallsetminus lk_Y(v)$.  Let $\gamma'$ be a maximal segment of $\gamma$ whose interior lies in the open star of $w$.  The subspace of the star of $w$ spanned by $w$ and $\gamma'$ can be identified with a subspace of the 2-sphere, with $w$ as the north pole and $\gamma'$ a geodesic in the upper hemisphere with endpoints on the equator.  Any such geodesic has length $\pi$,
		thus the distance between $\ell_1$ and $\ell_2$ in $lk_X(v)$ is~$\geq \pi$.
	\end{proof}
	
	In light of this lemma, to prove \Cref{Thrm:Local Convexity}, it remains to show  that for any vertex $v$ in $\DGP$, the link of $v$ in $\DGP$ (denoted $\lkp{v}$) is a full subcomplex of the link of $v$ in $\DG$ ( denoted $\lk{v}$).
	We approach this problem by splitting the link of a vertex into two pieces, the upward link and the downward link, such that the link of $v$ is the join of the upward and downward links. 
	
	\begin{defn}
		If $v$ is a vertex corresponding to the monoid coset $\cst{\alpha}{T}$, then any vertex in $\lkp{v}$ corresponds to a coset which is either included in or contains $\cst{\alpha}{T}$. The vertices in $\lkp{v}$ can be partitioned into two sets according to the direction of this inclusion and we call the subcomplexes spanned by these sets the \emph{upward and downward links} of $v$ in $\DGP$.  
		We define upward and downward links of vertices $\alpha A_T$ in $\DG$ similarly.  
	\end{defn} 
	
	By \Cref{Lem:Injectivity of cosets}, the upward link in $\lkp{v}$ is a subcomplex of the upward link in $\lk{v}$ and the downward link in $\lkp{v}$ is a subcomplex of the downward link in $\lk{v}$.
	
	We now focus on the upward and downward links in turn. 
	
	\begin{lemma} \label{UpwardLink}
		Let $\AG$ be an Artin group with monoid $\AG^+.$ For any vertex $v= \cst{\alpha}{T}\in \DGP$, the map $\iota$ sends the upward link of $\cst{\alpha}{T}$ in $\DGP$ to a full subcomplex of the upward link  of $\alpha A_T$ in $\DG$. 
	\end{lemma}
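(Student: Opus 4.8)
The plan is to show that $\iota$ restricts to a simplicial \emph{isomorphism} from the upward link of $\cst{\alpha}{T}$ in $\DGP$ onto the \emph{entire} upward link of $\alpha A_T$ in $\DG$. Since every simplicial complex is a full subcomplex of itself, this immediately yields the lemma; the content is therefore not fullness per se, but the stronger fact that the two upward links literally coincide.

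First I would identify the upward link of $\cst{\alpha}{T}$ in $\DGP$ with a purely combinatorial object depending only on $\SF$. By Lemma~\ref{lem:cosets}(1), the vertices of $\DGP$ strictly containing $\cst{\alpha}{T}$ are exactly those of the form $\cst{\alpha}{R}$ with $T \subsetneq R \in \SF$, and by Definition~\ref{Def:Deligne monoid complex} the cube $[\cst{\alpha}{T},\cst{\alpha}{R}]$ exists precisely when $R \in \SF$. Hence this upward link is the abstract simplicial complex $N_T$ whose vertices are the $s \in S \smallsetminus T$ with $T \cup \{s\} \in \SF$, and in which $\{s_1,\dots,s_k\}$ spans a simplex if and only if $T \cup \{s_1,\dots,s_k\} \in \SF$.

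Next I would make the analogous computation for $\alpha A_T$ in $\DG$, and here is the one point that requires a small observation. Any vertex $gA_R$ of $\DG$ strictly containing $\alpha A_T$ must contain the element $\alpha$, and therefore $gA_R = \alpha A_R$; moreover $\alpha A_T \subseteq \alpha A_R$ forces $T \subseteq R$. Thus every upward direction in $\DG$ is already represented by $\alpha$, so the upward link of $\alpha A_T$ is governed by the \emph{same} data — finite-type sets $R$ with $T \subsetneq R$ — and is canonically isomorphic to the same complex $N_T$. The key step, and the only place where anything could conceivably go wrong, is exactly this forcing of the representative: it guarantees that no new upward directions appear in $\DG$ that are absent from $\DGP$.

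Finally, the map $\iota$ sends $\cst{\alpha}{R}$ to $\alpha A_R$, which under the two identifications above is simply the identity on $N_T$. Consequently $\iota$ carries the upward link of $\cst{\alpha}{T}$ bijectively and simplicially onto the whole upward link of $\alpha A_T$, and the image is a fortiori a full subcomplex. I expect essentially no technical obstacle: the argument reduces to recognizing that the upward links on both sides are indexed by the same poset $\{R \in \SF : T \subsetneq R\}$, so the genuine difficulty in proving Theorem~\ref{Thrm:Local Convexity} must lie entirely in the downward link, where the monoid and group cosets differ.
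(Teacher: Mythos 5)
Your proof is correct, and it establishes something strictly stronger than the paper does. The paper's own proof is a bare fullness check: it takes vertices $\cst{\alpha}{T_i}$ (with $T_i = T \cup \{s_i\}$) in the upward link in $\DGP$ whose images span a simplex in $\DG$, observes that this forces $T' = \bigcup_i T_i \in \SF$, and concludes that the same simplex already exists in $\DGP$; it never claims, and never needs, that $\iota$ hits every vertex of the upward link in $\DG$. You instead show that both upward links are canonically isomorphic to the same abstract complex $N_T$ of finite-type supersets of $T$, the key extra step being the representative-forcing observation that any coset $gA_R \supseteq \alpha A_T$ contains $\alpha$, hence equals $\alpha A_R$ with $T \subseteq R$ --- a step the paper can skip because fullness only quantifies over vertices already in the image. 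Both arguments ultimately rest on the same combinatorial fact (simplices in either upward link are governed by membership of unions in $\SF$), but yours buys the additional information that $\iota$ is surjective on upward links, which cleanly explains why all of the difficulty in Theorem~\ref{Thrm:Local Convexity} concentrates in the downward link --- exactly matching the structure of the paper's subsequent proof, where the monoid and group cosets genuinely diverge. Two small points would make your version airtight: the implication $\alpha A_T \subseteq \alpha A_R \Rightarrow T \subseteq R$ should be justified via van der Lek's theorem (e.g.\ $A_{T} \cap A_{R} = A_{T \cap R}$, so $A_T \subseteq A_R$ gives $T = T \cap R$), and your identification of simplices of $N_T$ with sets $\{s_1,\dots,s_k\}$ such that $T \cup \{s_1,\dots,s_k\} \in \SF$ implicitly uses that $\SF$ is closed under passing to subsets (standard, since parabolic subgroups of finite Coxeter groups are finite).
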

	
	\begin{proof}
		By \Cref{lem:cosets} the vertices in the upward link can each be written as $\cst{\alpha}{R}$ where $R\in\SF$ and  $R=T\cup r \text{ for some } r\in S$.
		Now suppose that $\cst{\alpha}{T_1}\dots \cst{\alpha}{T_n}$ is a collection of vertices in the upward link, where each $T_i=T\cup s_i$ for some $s_i$ in the generating set $S$. Suppose further that the images of these vertices under $\iota$, $\alpha A_{T_i}$, span a simplex in $\DG$. This implies that $T'=\cup_i(T_i)$ is in $\SF$. 
		Thus $\cst{\alpha}{T'}$ is a vertex in the complex $\DGP$, and the vertices  $\cst{\alpha}{T_i}$ span a simplex in the link of $ \cst{\alpha}{T}$ in $\DGP$.
	\end{proof}
	
	Now we address the more difficult case of the downward link. The proof that downward links in $\DGP$ are mapped to full subcomplexes of downward links in $\DG$ will involve several steps, starting with the following lemma.
	
	\begin{lemma}\label{OneSkeleton} Let $\AG$ be an Artin group with monoid $\AG^+.$ For any vertex $v=\cst{\alpha}{T}\in \DGP$, the map $\iota$ sends the one-skeleton of the downward link of $\cst{\alpha}{T}$ to a full subgraph of the one-skeleton of the downward link of $\alpha A_{T}$.  
	\end{lemma}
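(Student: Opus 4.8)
The plan is to reduce the claimed fullness to a single statement about monoid cosets, and then to prove that statement by comparing left-lcms of minimal coset representatives.

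First I would pin down when two downward vertices are joined by an edge. A vertex of the downward part of $\lkp{\cst\alpha T}$ is a codimension-one coset $\cst{\mu}{R}$ with $R = T\smallsetminus\{s\}$ and $\cst\mu R\subset\cst\alpha T$; likewise for $\lk{\alpha A_T}$. Two such vertices $\cst{\mu_1}{R_1}$, $\cst{\mu_2}{R_2}$ (necessarily with distinct types $R_i = T\smallsetminus\{s_i\}$, $s_1\neq s_2$) span an edge of the link precisely when they are the two codimension-one faces of a common square with top $\cst\alpha T$; such a square exists if and only if its bottom, the intersection of the two cosets, is non-empty. By Lemma~\ref{lem:cosets}(2) this monoid intersection, when non-empty, is a single coset $\cst\delta Q$ with $Q := R_1\cap R_2 = T\smallsetminus\{s_1,s_2\}$, and by van der Lek the same holds for the group cosets. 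Since same-type pairs are never adjacent in either complex (the two faces of a square carry distinct types), via Lemma~\ref{lem:group cosets} the lemma reduces to the implication: if the group cosets $\mu_1 A_{R_1}$ and $\mu_2 A_{R_2}$ meet, then the monoid cosets $\cst{\mu_1}{R_1}$ and $\cst{\mu_2}{R_2}$ meet. Note that $Q$ has codimension exactly two, so $T = Q\cup\{s_1,s_2\}$.

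Next I would choose $\mu_1,\mu_2$ to be the minimal (reduced) representatives of their cosets, so that $\End{\mu_i}{R_i}=e$ and $\cst{\mu_i}{R_i}=\mu_i A_{R_i}^+$. Both lie in $\cst\alpha T = \mr\alpha T\, A_T^+$, a left-translate of the finite-type lattice $A_T^+$, so the left-lcm $\ell := \llm(\mu_1,\mu_2)$ exists. The goal becomes to show $\ell\in\cst{\mu_1}{R_1}\cap\cst{\mu_2}{R_2}$; writing $\ell=\mu_1 c_1=\mu_2 c_2$ with $c_i\in A_T^+$, this is exactly the assertion that $c_i\in A_{R_i}^+$. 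Once this is known, $\ell$ is a common positive element, and combined with the internal claim of Lemma~\ref{lem:cosets}(2) (that $\mr\gamma Q=\llm(\mr\gamma{R_1},\mr\gamma{R_2})$ for any positive $\gamma$) one even identifies $\cst{\mu_1}{R_1}\cap\cst{\mu_2}{R_2}=\cst\ell Q$, so the edge is present in $\DGP$. The hypothesis must enter precisely at the step $c_i\in A_{R_i}^+$: I would reformulate it as $\mu_1^{-1}\mu_2\in A_{R_1}A_{R_2}$ and the target as $\mu_1^{-1}\mu_2\in A_{R_1}^+(A_{R_2}^+)^{-1}$. This implication is \emph{false} without the reducedness conditions---e.g.\ in the braid group $s_1 s_2\in\langle s_1\rangle\langle s_2\rangle$ but $s_1 s_2\notin\langle s_1\rangle^+(\langle s_2\rangle^+)^{-1}$---so $\End{\mu_i}{R_i}=e$ is doing essential work and must be used.

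I expect this last step---showing that the left-lcm does not overshoot the cosets---to be the main obstacle. I would attack it by right-cancellation in the finite-type monoid $A_T^+$: assuming $c_1\notin A_{R_1}^+$, track the right-divisors through $\ell=\mu_1 c_1=\mu_2 c_2$, use the lattice identities for $\gcd_L$ and $\llm$ together with the fact that a common left-divisor of elements of $A_{R_1}^+$ and $A_{R_2}^+$ lies in $A_{R_1}^+\cap A_{R_2}^+=A_Q^+$, and derive that some $s_1\in R_1$ becomes a right-divisor of $\mu_1$, contradicting $\End{\mu_1}{R_1}=e$. The codimension-two structure $T=Q\cup\{s_1,s_2\}$ reduces this bookkeeping to a relative rank-two situation over $A_Q$, where the finite-type Garside data ($\Delta_T$, $\Delta_{R_i}$, right-cancellativity) make the cancellation run exactly as in the rank-two base case; this is the part I expect to be longest and most delicate.
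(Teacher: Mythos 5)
Your reduction is sound and in fact mirrors the paper's setup: translating by the minimal representative to assume $\alpha=e$, characterizing edges in the downward link via a common square with top $\cst{e}{T}$, and reducing the lemma to the implication ``$\mu_1A_{R_1}\cap\mu_2A_{R_2}\neq\emptyset$ in the group $\Rightarrow$ the monoid cosets meet.'' Your target statement is also correct --- one can show that when the monoid cosets meet, $\llm(\mu_1,\mu_2)$ does lie in $\mu_1A_{R_1}^+\cap\mu_2A_{R_2}^+$ --- and your cautionary example showing that reducedness ($\End{\mu_i}{R_i}=e$) is needed is well taken. But there is a genuine gap: the step $c_i\in A_{R_i}^+$, which you yourself identify as where the hypothesis must enter, is precisely the entire content of the lemma, and your proposal offers only a plan of attack for it, not an argument. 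The sketch (``track the right-divisors through $\ell=\mu_1c_1=\mu_2c_2$, use the lattice identities, derive that some $s_1\in R_1$ right-divides $\mu_1$'') never explains the mechanism by which the group-level hypothesis is imported into the positive monoid: the elements $c_1,c_2$ are defined purely from the lcm, which exists regardless of whether the group cosets meet, so any bookkeeping that manipulates only $\gcd_L$, $\llm$ and right-divisibility of positive elements cannot by itself distinguish the adjacent case from the non-adjacent one. Likewise, the claimed reduction ``to a relative rank-two situation over $A_Q$'' is unsupported; nothing in the finite-type theory gives such a relative reduction for free, and this is exactly where the difficulty is concentrated.

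For comparison, the paper's proof supplies the missing mechanism via Garside normal forms in the finite-type subgroups $A_{R_i}$: a common element $c\in \mu_1A_{R_1}\cap\mu_2A_{R_2}$ is written as $c=a_1b_1\Delta_1^{-k}=a_2b_2\Delta_2^{-k}$ with $b_i\in A_{R_i}^+$ and $\Delta_i$ the Garside element of $A_{R_i}$ (padding powers so the exponents agree). Setting $d=\gcd_L(\Delta_1^{k},\Delta_2^{k})$ and $\Delta_i^{k}=dz_i$, one has $(a_1b_1)^{-1}(a_2b_2)=z_1^{-1}z_2$ with $\gcd_L(z_1,z_2)=e$; the uniqueness of such minimal fraction representations (\cite[Theorem 2.6]{Charney1995a}) then forces $a_ib_i\sgeq_R z_i$, so $cd=a_ib_iz_i^{-1}$ is a \emph{positive} element of both group cosets, hence lies in both monoid cosets by Lemma~\ref{lem:group cosets}. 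Note how the group hypothesis enters concretely (through the element $c$ and its two Garside decompositions) and where uniqueness of fractions does the work that your ``cancellation bookkeeping'' would have to do. If you want to salvage your lcm-based route, you would still need some version of this normal-form step to convert membership in $\mu_1A_{R_1}\cap\mu_2A_{R_2}$ into positive data; as written, the proposal does not prove the lemma.
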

	
	\begin{proof}  Suppose $\cst{a_1}{T_1}$ and $\cst{a_2}{T_2}$ are vertices in the downward link of $\cst{\alpha}{T}$. 
		
		Assume $\alpha$ is the minimal representative in $\cst{\alpha}{T}$.  Then left multiplication by $\alpha$ preserves the inclusion relation on cosets and  maps the downward link of $\cst{e}{T}$ (where $e$=identity) isomorphically to the downward link of $\cst{\alpha}{T}$. Thus we may assume without loss of generality that $\alpha=e$, and $a_1,a_2 \in A_T^+$.

		Let $T_{12}=T_1\cap T_2$.
		By assumption, $a_1A_{T_1}, a_2A_{T_2}, A_T$ lie in a cube in $\DG$ spanned by $cA_{T_{12}}$ and  $A_T$  for some $c \in a_1A_{T_1} \cap a_2A_{T_2}$.  By Lemma \ref{lem:garsideform}, any element of $A_{T_1}$ can be written in the form $b_1\Delta_1^{-k}$ where $b_1 \in A_{T_1}^+, k \geq 0$ and $\Delta_1$ is the Garside element for $A_{T_1}$.  Thus we can write $c=a_1b_1\Delta_1^{-k}$  and likewise $c=a_2b_2\Delta_2^{-j}$.  
		Say $k \geq j$.  Then replacing $b_2$ by $b_2\Delta_2^{k-j}$, we may assume that $k=j$, that is,
		$$c = a_1b_1 \Delta_1^{-k} = a_2b_2 \Delta_2^{-k}.$$

		If $k=0$, then $c$ lies in the monoid $A_T^+$.  Hence the interval $[\cst{c}{T_{12}}, \cst{e}{T}]$ spans a cube in $\DGP$. So suppose $k > 0$.  Let $d = \gcd_L(\Delta_1^{k}, \Delta_2^{k})$ be the maximal left divisor, and write $\Delta_i^{k} = d z_i$.  
		We will show that $cd \in a_1A_{T_1}^+ \cap a_2A_{T_2}^+$. First, note that $cd=a_ib_iz_i^{-1} \in a_iA_{T_i}$ for $i=1,2$, so it remains only to check that $cd$ lies in the monoid. For this, note  that $(a_1b_1)^{-1}(a_2b_2) = z_1^{-1}z_2$.  Since
		$\gcd_L(z_1, z_2) = e$,  $z_1^{-1}z_2$ is the unique minimal representative for this element (unique by~\cite[Theorem 2.6]{Charney1995a}). Let~$g=\gcd_L(a_1b_1, a_2b_2)$, so there exists some~$y_1$ and~$y_2$ in~$\AG^+$ with $\gcd_L(y_1, y_2) = e$ such that~$a_ib_i=gy_i$. Then 
		\[
		(a_1b_1)^{-1}(a_2b_2)=y_1^{-1}g^{-1}gy_2=y_1^{-1}y_2=z_1^{-1}z_2
		\]  
and by uniqueness it follows that~$y_i=z_i$. Therefore $a_ib_i \sgeq_R z_i$, and it follows that $cd=a_ib_iz_i^{-1}$ is in $a_iA_{T_i}^+$.  
	\end{proof}

	Now we turn to the case of higher dimensional simplices in the downward link. 
	
	\begin{lemma} \label{DownwardLinkFlag}
		Let $\AG$ be an Artin group with monoid $\AG^+.$ For any vertex $v\in \DGP$, the downward link of this vertex is flag.  
	\end{lemma}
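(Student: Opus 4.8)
The plan is to prove that the downward link of $v=\cst{\alpha}{T}$ in $\DGP$ is flag by showing that every clique in it spans a simplex, and the mechanism will be to push a clique up into $\DG$, fill it in there, and then \emph{lift} the resulting cube back into $\DGP$. First I would reduce to the case $\alpha=e$: exactly as in the proof of \Cref{OneSkeleton}, left multiplication by the minimal representative $\mr{\alpha}{T}$ carries the downward link of $\cst{e}{T}$ isomorphically onto that of $\cst{\alpha}{T}$, so I may assume every vertex of the clique is a coset $\cst{a_i}{T_i}$ with $a_i\in A_T^+$ and $T_i\subsetneq T$. Because $T\in\SF$, the group $A_T$ is finite type, hence FC-type, so its Deligne complex is CAT(0) in the cubical metric; by Gromov's criterion all of its vertex links are flag, and after translating $\alpha$ to $e$ the downward link of $\alpha A_T$ in $\DG$ is precisely the link of the top vertex $A_T$ in the Deligne complex of $A_T$. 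Thus the downward link of $\alpha A_T$ in $\DG$ is a flag complex.

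Now given a clique $w_1,\dots,w_n$ in the downward link of $\cst{e}{T}$ in $\DGP$, \Cref{OneSkeleton} shows the images $a_iA_{T_i}$ form a clique in the (flag) downward link of $A_T$ in $\DG$, so they span a simplex there; equivalently $\bigcap_{i=1}^n a_iA_{T_i}=cA_R$ is a single coset, with $R=\bigcap_i T_i$ and $c\in A_T$. The heart of the argument is to show this cube $[cA_R,A_T]$ lies in $\DGP$, i.e.\ that $cA_R$ is a \emph{monoid} coset. I would prove this by induction on $n$, the cases $n\le 2$ being precisely \Cref{OneSkeleton}. For the inductive step, the subclique $w_1,\dots,w_{n-1}$ spans a simplex of $\DGP$ by induction, so $\bigcap_{i<n}a_iA_{T_i}=c'A_{R'}$ is a monoid coset (with $R'=\bigcap_{i<n}T_i$), and I may take its minimal representative $c'\in A_T^+$. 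Then, using $A_{R'}\cap A_{T_n}=A_{R'\cap T_n}$ together with \Cref{lem:cosets} and \Cref{lem:group cosets}, we get $cA_R=c'A_{R'}\cap a_nA_{T_n}$ with $R=R'\cap T_n$, and both $c'A_{R'}$ and $a_nA_{T_n}$ now carry positive representatives. Rerunning the Garside computation of \Cref{OneSkeleton} on the pair $\cst{c'}{R'},\cst{a_n}{T_n}$ — writing a common element as $c'b'\Delta_{R'}^{-k}=a_nb_n\Delta_{T_n}^{-k}$, setting $d=\gcd_L(\Delta_{R'}^{k},\Delta_{T_n}^{k})$ and using uniqueness of minimal representatives to conclude $c'b'\sgeq_R z_{R'}$ and $a_nb_n\sgeq_R z_{T_n}$ — produces a positive element of $c'A_{R'}^+\cap a_nA_{T_n}^+\subseteq cA_R$. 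Hence $cA_R$ is a monoid coset. This rerun is legitimate even though $R'$ need not have the form $T\smallsetminus\{s\}$: the argument of \Cref{OneSkeleton} uses only that $R',T_n\subseteq T$ are finite type, so that $\Delta_{R'}$ and $\Delta_{T_n}$ exist (\Cref{lem:garsideform}), and that the representatives are positive.

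Having lifted the cube, write $c$ for a positive representative; then $[\cst{c}{R},\cst{e}{T}]$ is a cube of $\DGP$, and by \Cref{lem:cosets} each $\cst{a_i}{T_i}$ satisfies $\cst{c}{R}\subseteq\cst{a_i}{T_i}\subseteq\cst{e}{T}$, so the $w_i$ are faces of this cube and therefore span a simplex in the downward link, giving flagness. The \textbf{main obstacle} is exactly this lifting step: from the purely pairwise information that each $a_iA_{T_i}$ meets the monoid, one must extract a \emph{single} positive element lying in the full intersection $cA_R$. This is a Helly-type phenomenon, and the only tool controlling it is the behaviour of greatest common left divisors of Garside powers; the reason the induction succeeds is that it always confronts this gcd computation two cosets at a time, where the proof of \Cref{OneSkeleton} applies verbatim, rather than for all $n$ cosets simultaneously.
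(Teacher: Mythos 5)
Your proof is correct, but it takes a genuinely different route from the paper's. The paper never leaves the monoid: it first shows that each pairwise intersection is a coset of a least common multiple, $\cst{a_i}{T_i}\cap\cst{a_j}{T_j}=\cst{\llm(a_i,a_j)}{T_{ij}}$, and then proves the Helly-type statement directly by writing $\llm(a_1,a_j)=a_1m_j$ with $m_j\in A^+_{T_1}$, using finite-typeness of $A^+_{T_1}$ to form $\llm(\{m_j\})$, deducing that $\llm(\{a_i\})$ exists, and verifying that it lies in every $\cst{a_k}{T_k}$; no appeal is made to $\DG$, to CAT(0) geometry, or to the Garside-fraction computation of \Cref{OneSkeleton}. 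You instead obtain the Helly property at the group level --- since $A_T$ is finite type, hence FC-type, the complex $\mathcal{D}_T$ is CAT(0) by \cite{CharneyDavis1995}, its vertex links are flag by Gromov's criterion, and after translating to $\alpha=e$ the downward link of $A_T$ in $\DG$ is exactly the link of the top vertex of $\mathcal{D}_T$, so your clique spans a simplex in $\DG$ and $\bigcap_i a_iA_{T_i}=cA_R\neq\emptyset$ --- and you then lift $cA_R$ into the monoid by induction, correctly observing that the gcd-of-Garside-powers argument in \Cref{OneSkeleton} uses only that the two subsets are finite type and the representatives positive, never that they have codimension one in $T$. The inductive step checks out: the hypothesis supplies a positive element of $\bigcap_{i<n}a_iA_{T_i}$, which via van der Lek's $A_{T_1}\cap A_{T_2}=A_{T_1\cap T_2}$ (as in the remark after \Cref{lem:group cosets}) identifies that intersection as $c'A_{R'}$ with $c'\in A_T^+$, and the pairwise rerun on $\cst{c'}{R'},\cst{a_n}{T_n}$ produces the positive element of $cA_R$ needed to form the cube $[\cst{c}{R},\cst{e}{T}]$ in $\DGP$ and hence the simplex. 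As for what each approach buys: the paper's argument is self-contained and purely combinatorial, which is apt given that Charney--Davis's proof of flagness of links in $\mathcal{D}_T$ is itself essentially the same lcm computation, so your citation, while legitimate and non-circular (it invokes only the classical finite type case), packages the combinatorial content inside a heavier external theorem; in exchange, your version recycles \Cref{OneSkeleton} wholesale, so the only new work is the two-cosets-at-a-time induction, and it makes explicit the pleasant structural point that flagness of links in $\DGP$ is inherited from flagness of links in $\DG$ once a single group coset can be lifted into the monoid.
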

	
	\begin{proof}
		As before, left multiplication by $\alpha$ preserves the partially ordered set on the cosets. The downward link of $v=\cst{\alpha}{T}$ is given by a copy of the Deligne monoid complex for $A^+_T$, left multiplied by $\mr{\alpha}{T}$.
		So it is sufficient to show this result in that case of a vertex $\cst{e}{T}$, where $e$ is the identity element.  
		
		Vertices in the downward link of $\cst{e}{T}$ are of the form $\cst{a_i}{T_i}$, where $a_i\in A_T^+$ and  $T_i=T\backslash \{t_i\}$, for some $t_i\in T$. Suppose we have a set of vertices in this form for $1\leq i\leq n$ and each pair of vertices in this set spans an edge in the downward link. In other words, suppose that  $\cst{a_i}{T_i} \cap \cst{a_j}{T_j}\neq \emptyset$ for all pairs $\{i,j\}$. We would like to show that these vertices span an $n$-simplex in the downward link by showing that $\cap_{i} \cst{a_i}{T_i}\neq \emptyset$.

		First we will find an expression for $\cst{a_i}{T_i} \cap \cst{a_j}{T_j}$ as a single coset. Let $T_{ij}=T_i\cap T_j$. The fact that $\cst{a_i}{T_i} \cap \cst{a_j}{T_j}\neq \emptyset$ and  $\cst{a_i}{T_i} \neq \cst{a_j}{T_j}$ implies that $T_i\neq T_j$ and $T_{ij}$ is a strict subset of these sets. This means that if $\beta\in A_T^+$ is in the intersection $\cst{a_i}{T_i} \cap \cst{a_j}{T_j}$, then this intersection can be written as $\cst{\beta}{T_{ij}}$. However we would like a more precise expression for $\beta$. 
		
		\textbf{Claim}:~$\cst{a_i}{T_i} \cap \cst{a_j}{T_j}\neq \emptyset \iff$  $\llm(a_i,a_j)$ exists and  
		$\cst{a_i}{T_i} \cap \cst{a_j}{T_j}=\cst{\llm(a_i,a_j)}{T_{ij}}.$
		
		{Proof of claim}:~($\Leftarrow$) is immediate.\\
		($\Rightarrow$) Suppose~$\cst{a_i}{T_i} \cap \cst{a_j}{T_j}\neq \emptyset$. Then there exists a common multiple~$x$ such that~$x=a_im_i$ and~$x=a_jm_j$ for~$m_i\in A^+_{T_i}$ and~$m_j\in A^+_{T_j}$.  So $\llm(a_i,a_j)$ exists and~$x=\llm(a_i,a_j)m$ for some $m$. Write~$\llm(a_i,a_j)=a_ib_i=a_jb_j$ and compare~$x=\llm(a_i,a_j)m=a_ib_im=a_jb_jm$ to~$x=a_im_i$ and~$x=a_jm_j$. By cancellation of~$a_i$ and~$a_j$ on the left it follows that~$b_im\in A^+_{T_i}$ and~$b_jm \in A^+_{T_j}$. 
		Therefore $m\in A^+_{T_{ij}}$
		and~$x\in \cst{\llm(a_i, a_j)}{T_{ij}}$.
		This shows~$\cst{a_i}{T_i} \cap \cst{a_j}{T_j} \subseteq \cst{\llm(a_i,a_j)}{T_{ij}}.$
		
		To show~$\supseteq$, note that from above~$b_i\in A^+_{T_i}$ and~$b_j \in A^+_{T_j}$. Therefore if~$y=\llm(a_i,a_j)m$ for~$m\in A^+_{T_{ij}}$ then it follows that~$y=a_ib_im=a_jb_jm$ and since~$m\in A^+_{T_{ij}}\subset A^+_{T_i} \text{ and }A^+_{T_j}$, then~$y \in \cst{a_i}{T_i} \cap \cst{a_j}{T_j}$.
		
		

		Now we turn our attention to the intersection $\cap_{i} \cst{a_i}{T_i}$.

		\textbf{Claim}: if~$\cst{a_i}{T_i} \cap \cst{a_j}{T_j}=\cst{\llm(a_i,a_j)}{T_{ij}}$ is non-empty for all pairs $\{i,j\}$, then $\cap_{i} \cst{a_i}{T_i}$ is also non-empty.
		
		{Proof of claim}: Set~$i=1$. We have that~$\llm(a_1,a_j)$ is in $\cst{a_1}{T_1} \cap \cst{a_j}{T_j}$ for all $j$ and so we can write $\llm(a_1,a_j)=a_1 m_j=a_jn_j$ for~$m_j\in A^+_{T_1}$ and~$n_j \in A^+_{T_j}$. 
		
		Since, for every $2\leq j\leq n$, ~$m_j$ is in~$A^+_{T_1}$, and~$A^+_{T_1}$ is finite type (by definition), it follows that~$\llm(\{m_j\})$ exists and is in~$A^+_{T_1}$. So~$a_1\llm(\{m_j\})\in \cst{a_1}{T_1}$ satisfies that~$a_i$ is a left divisor of this element for all $i$. Since they have a common multiple, they have a least common multiple~$\llm(\{a_i\})$ for $1\leq i\leq n$.

		We now show that $\llm(\{a_i\})$ is in~$\cst{a_1}{T_1}$.  Suppose it is not, then~$\llm(\{a_i\})=a_1 x$ for~$x\in A_T^+$ but $x\notin A^+_{T_1}$. Since~$a_1\llm(\{m_j\})$ is a common multiple, the least common multiple is a left divisor of it, so~$a_1\llm(\{m_j\})=a_1 x y$ for some~$y\in A^+$. Then cancellation of~$a_1$ gives~$xy=\llm(\{m_j\}) \in A^+_{T_1}$. This contradicts~$x\notin A^+_{T_1}$. 
		
		A similar argument shows that $\llm(\{a_i\})$ is in~$\cst{a_k}{T_k}$ for any $1\leq k\leq n$. This shows that   $\llm(\{a_i\})$ is in the intersection $\cap_{i} \cst{a_i}{T_i}$ completing the proof. 
	\end{proof}

	Since flag complexes are completely determined by their one-skeleton, combining \Cref{OneSkeleton,DownwardLinkFlag}, gives the desired result on downward links.

	\begin{lemma} \label{DownwardLink}
		Let $\AG$ be an Artin group with monoid $\AG^+.$ For any vertex $v= \cst{\alpha}{T}\in \DGP$, the map $\iota$ sends the downward link of $\cst{\alpha}{T}$ in $\DGP$ to a full subcomplex of the downward link  of $\alpha A_T$ in $\DG$. 
	\end{lemma}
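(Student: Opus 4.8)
The plan is to deduce the statement directly from the two preceding lemmas, exploiting the elementary fact that a flag complex is completely determined by its $1$-skeleton. First I would recall that, by \Cref{Lem:Injectivity of cosets}, the map $\iota$ is injective and sends edges to edges, so it identifies the downward link $L$ of $\cst{\alpha}{T}$ in $\DGP$ with a subcomplex of the downward link $L'$ of $\alpha A_T$ in $\DG$. The content of the lemma is then precisely that this subcomplex is \emph{full}.

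To verify fullness, I would take an arbitrary finite collection of vertices $\cst{a_1}{T_1}, \dots, \cst{a_n}{T_n}$ of $L$ whose images $a_1A_{T_1}, \dots, a_nA_{T_n}$ span a simplex in $L'$. In particular, every pair of these images spans an edge in $L'$. By \Cref{OneSkeleton}, the one-skeleton of $L$ maps to a \emph{full subgraph} of the one-skeleton of $L'$, which means exactly that any two vertices of $L$ whose images are joined by an edge in $L'$ are themselves joined by an edge in $L$. Applying this to each pair, I conclude that the vertices $\cst{a_1}{T_1}, \dots, \cst{a_n}{T_n}$ are pairwise joined by edges inside $L$.

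Finally, by \Cref{DownwardLinkFlag}, the downward link $L$ is a flag complex, so any set of pairwise-adjacent vertices spans a simplex. Hence $\cst{a_1}{T_1}, \dots, \cst{a_n}{T_n}$ span a simplex in $L$, which is exactly the fullness condition, completing the argument.

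I do not anticipate a genuine obstacle at this stage: all of the real work has already been carried out in \Cref{OneSkeleton} (the delicate Garside-theoretic argument, via $\gcd_L(\Delta_1^k,\Delta_2^k)$, producing a \emph{monoid} element in the intersection of two group cosets) and in \Cref{DownwardLinkFlag} (the $\llm$-computation promoting pairwise intersections to a nonempty common intersection). The only point requiring care here is the bookkeeping of definitions — matching ``full subgraph of the one-skeleton'' with the pairwise-edge condition, and ``flag'' with the simplex-filling condition — so that the two earlier results dovetail cleanly.
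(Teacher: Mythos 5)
Your proposal is correct and matches the paper's own argument exactly: the paper proves this lemma in one line by combining \Cref{OneSkeleton} and \Cref{DownwardLinkFlag}, using precisely the observation that flag complexes are determined by their one-skeleta. Your write-up simply makes explicit the bookkeeping (pairwise edges in $\DG$ pull back to pairwise edges in $\DGP$ by fullness of the one-skeleton, then flagness fills in the simplex) that the paper leaves implicit.
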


	Finally we show that the link of a vertex in $\DGP$ is the join of the upward and downward links. 
	
	\begin{lemma} \label{FullSubcomplex}
		Suppose that $\AG$ is an arbitrary Artin group and $v=\cst{\alpha}{T}$ is a vertex in $\DGP$. Then the map $\iota$ takes $\lkp{v}$ to a full subcomplex of $\lk{\iota(v)}$.
	\end{lemma}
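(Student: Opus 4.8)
The plan is to combine the three preceding lemmas via the standard fact that the link of a vertex in a cube complex decomposes as a spherical join of its upward and downward links, and then invoke the fact that a join of full subcomplexes is full. Concretely, for $v = \cst{\alpha}{T}$, every vertex $w$ adjacent to $v$ in $\lk{\iota(v)}$ corresponds to a coset $\beta A_R$ with either $\beta A_R \subsetneq \alpha A_T$ (downward) or $\alpha A_T \subsetneq \beta A_R$ (upward), and these two possibilities partition the vertex set of the link. This gives $\lk{\iota(v)} = \lk{\iota(v)}^{\mathrm{up}} \ast \lk{\iota(v)}^{\mathrm{down}}$, and similarly $\lkp{v} = \lkp{v}^{\mathrm{up}} \ast \lkp{v}^{\mathrm{down}}$, where $\iota$ respects this join decomposition by \Cref{Lem:Injectivity of cosets}.

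\emph{First} I would verify the join decomposition itself. The key point is that any upward vertex and any downward vertex of $v$ are automatically joined by an edge in $\DG$: if $\beta_1 A_{R_1} \subsetneq \alpha A_T \subsetneq \beta_2 A_{R_2}$, then the cosets $\beta_1 A_{R_1}, \alpha A_T, \beta_2 A_{R_2}$ all lie in the cube spanned by the interval $[\beta_1 A_{R_1}, \beta_2 A_{R_2}]$, so in the link the corresponding simplices are joined. This is precisely the statement that cubes containing $v$ correspond to pairs (a cube below $v$, a cube above $v$), which is the combinatorial source of the join structure. The same reasoning applies verbatim inside $\DGP$, using that \Cref{lem:cosets} describes intervals of monoid cosets identically.

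\emph{Then} I would assemble the conclusion. By \Cref{UpwardLink}, $\iota$ sends $\lkp{v}^{\mathrm{up}}$ to a full subcomplex of $\lk{\iota(v)}^{\mathrm{up}}$, and by \Cref{DownwardLink}, $\iota$ sends $\lkp{v}^{\mathrm{down}}$ to a full subcomplex of $\lk{\iota(v)}^{\mathrm{down}}$. It remains to observe that the join of two full subcomplexes is a full subcomplex of the join: if vertices in $\lkp{v}$ span a simplex in $\lk{\iota(v)}$, we split them into their upward and downward parts $U$ and $D$; since any subset of the vertices of a simplex spans a face, $U$ spans a simplex in $\lk{\iota(v)}^{\mathrm{up}}$ and $D$ in $\lk{\iota(v)}^{\mathrm{down}}$, so by fullness of each factor $U$ spans a simplex in $\lkp{v}^{\mathrm{up}}$ and $D$ in $\lkp{v}^{\mathrm{down}}$, whence $U \cup D$ spans a simplex in the join $\lkp{v}$. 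This completes the argument.

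\emph{The main obstacle} I anticipate is not any single computation — all the hard combinatorics has already been absorbed into Lemmas \ref{UpwardLink} and \ref{DownwardLink} — but rather making the join decomposition precise and checking that $\iota$ genuinely carries the join structure of $\lkp{v}$ into that of $\lk{\iota(v)}$ respecting the up/down partition. One must confirm that $\iota$ sends upward vertices to upward vertices and downward to downward (immediate from the fact that $\iota$ preserves the inclusion ordering of cosets, by \Cref{Lem:Injectivity of cosets}), so that no mixing occurs. Once this bookkeeping is in place, fullness of the join follows formally from fullness of the two factors, and the theorem is complete.
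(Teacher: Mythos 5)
Your proposal is correct and follows essentially the same route as the paper: both partition a candidate simplex's vertices into upward and downward parts, invoke Lemmas \ref{UpwardLink} and \ref{DownwardLink} for each part, and then use the interval/cube structure (your join decomposition of $\lkp{v}$ is exactly the paper's observation that intervals $[z,v]$ and $[v,w]$ combine into an interval $[z,w]$ spanning a cube in $\DGP$). Your extra verification of the join structure on $\lk{\iota(v)}$ is harmless but not needed, since subsets of a simplex's vertex set automatically span faces.
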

	
	\begin{proof}
		
		Suppose that $X$ is a set of vertices in $\lkp{v}$, and suppose that these vertices span a simplex in $\lk{v}$. The set $X$ can be partitioned into two sets $X_u$ and $X_d$ in the upward and downward links respectively. By \Cref{UpwardLink,DownwardLink} the sets $X_u$ and $X_d$ must span simplices in $\lkp{v}$. If either $X_u$ or $X_d$ is empty then we are done. 
		
		Assume  $X_u$ and $X_d$  are non-empty.  Simplices in $\lkp{v}$ correspond to cubes in $\DGP$ containing $v$, or equivalently, intervals containing $v$.  The simplex spanned by $X_u$ corresponds to an interval $[v,w]$ for some $w \in X_u$ while the simplex spanned by $X_d$ corresponds to an interval $[z,v]$ for some $z \in X_d$.  It follows that $[z,w]$ is an interval containing $v$ and the corresponding simplex in $\lkp{v}$ is precisely the span of $X$. 
	\end{proof} 
	
	

	Combining  \Cref{Lem:LocalEmbedding} with \Cref{FullSubcomplex}, this completes the proof of \Cref{Thrm:Local Convexity}.

\section{Properties of the monoid embedding} \label{Section:CayleyGraph}

	Our focus so far has been on the monoid Deligne complex and its relation to the full Deligne complex.  In this section we consider the relation between monoid Cayley graphs and the group Cayley graphs.  
	Let $S$ be the standard generating set for an Artin group $\AG$ and let ~$\calG(\AG,S)$ denote the corresponding Cayley graph.  As noted above, by a result of Paris~\cite{Paris2002} the Artin monoid $\AG^+$ injects into the Artin group so we can identify elements of $\AG^+$ with vertices in this Cayley graph.  
	
	\begin{defn}
		The \emph{Artin monoid Cayley graph}, $\calG^+(\AG, S)$, is  the full subgraph of~$\calG(\AG,S)$ spanned by the vertices~$v\in \AG^+$.  
	\end{defn}
	
	Note that when considering a path in the Artin monoid Cayley graph, one can traverse either forwards or backwards along edges, i.e.~between monoid elements~$v$ and~$v\cdot s$ for~$s\in \S$.
	
	Consider the induced metric on $\calG^+(\AG, S)$.  That is, the distance between two vertices
	$a,b \in \AG^+$ is the length of the shortest path in $\calG^+(\AG, S)$ connecting them.    It is interesting to ask whether $\calG^+(\AG, S)$ embeds isometrically (respectively convexly) in  $\calG(\AG,S)$, that is, whether some (respectively any) minimal length path from $a$ to $b$ in $\calG^+(\AG, S)$ is also minimal length in $\calG(\AG,S)$.  {Since translation by $a^{-1}$ does not preserve $\calG^+(\AG, S)$, the problem does not simply reduce to the case where $a=e$, and it seems quite subtle in general.}
	
	We believe that a more promising approach is to use a slightly larger generating set, namely the set of minimal elements as defined below.
	
	\begin{defn}
		Let~$\calM$ be the set of all \emph{minimal} elements in~$\AG$. That is
		\[
		\calM=\{m\in \AG^+ \mid m\sleq_L \Delta_T, \, T\in \SF\}.
		\]
	\end{defn}
	
	The minimal elements in a finite type subgroup $A_T$ are in one-to-one correspondence with the non-trivial elements in the Coxeter group $W_T$.  
	For the remainder of this section we denote the corresponding Cayley graphs by  $\calG=\calG(\AG, \calM)$ and $\calG^+=\calG^+(\AG, \calM)$. 
	
	In the case of a finite type Artin group $\AG$, one can algorithmically find a normal form for elements $g \in \AG$ which is geodesic with respect to this generating set~\cite{Charney1995a}.  This normal form is obtained by first factoring $g$ into a product $g=a^{-1}b$, where $a,b \in \AG^+$ and $\gcd_L(a,b)=e$, and then factoring each of $a$ and $b$ into a product of minimals in a canonical way
	(called the right greedy normal form of $a$ and $b$.)  
	
	\begin{proposition} \label{Prop:IsoEmdbedding}
		Suppose~$\AG$ is a finite type Artin group, then $\calG^+(\AG, \calM)$ embeds isometrically in $\calG(\AG, \calM)$.
	\end{proposition}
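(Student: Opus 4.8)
The plan is to use the geodesic normal form for finite type Artin groups recalled just above, together with the observation that an $\calM$-geodesic between two positive elements can be routed through their greatest common left divisor, which is itself positive. Since $\calG^+$ is by definition a full subgraph of $\calG$, every edge path in $\calG^+$ is also an edge path in $\calG$, so $d_{\calG}(a,b)\le d_{\calG^+}(a,b)$ automatically for all $a,b\in\AG^+$. The whole content of the proposition is therefore the reverse inequality $d_{\calG^+}(a,b)\le d_{\calG}(a,b)$: some minimal length $\calM$-path in $\calG$ can be replaced by an $\calM$-path of the same length that never leaves the monoid.

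First I would compute $d_{\calG}(a,b)$ explicitly. Set $g_0=\gcd_L(a,b)$ and write $a=g_0c$, $b=g_0d$ with $c,d\in\AG^+$; then $\gcd_L(c,d)=e$ and $a^{-1}b=c^{-1}d$, which is precisely the reduced factorization underlying the geodesic normal form (and it is the unique such factorization into positive elements with trivial left gcd). Writing $c=c_1\cdots c_p$ and $d=d_1\cdots d_q$ as right greedy normal forms, with each $c_i,d_j\in\calM$, the word $c_p^{-1}\cdots c_1^{-1}d_1\cdots d_q$ represents $a^{-1}b$ and is geodesic with respect to $\calM$, so $d_{\calG}(a,b)=p+q$.

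Next I would exhibit a monoid path of exactly this length. Consider the sequence of vertices obtained by deleting the factors of $c$ one at a time from the right of $a$ and then appending the factors of $d$:
\[
a=g_0c_1\cdots c_p,\ g_0c_1\cdots c_{p-1},\ \ldots,\ g_0,\ g_0d_1,\ \ldots,\ g_0d_1\cdots d_q=b.
\]
Each vertex is a product of positive elements, hence lies in $\AG^+$, and consecutive vertices differ by right multiplication by one of the minimal elements $c_i$ or $d_j$, so they are joined by an edge of $\calG^+$ (traversed backwards in the first half, which is permitted). This is an $\calM$-path in $\calG^+$ of length $p+q$, yielding $d_{\calG^+}(a,b)\le p+q=d_{\calG}(a,b)$ and hence equality.

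The only genuinely nontrivial input is the step that pins down $d_{\calG}(a,b)=p+q$, namely that the right greedy normal form of $c^{-1}d$ is geodesic in $\calM$, as recalled from \cite{Charney1995a}; everything else is bookkeeping. The crucial structural point, and the reason the argument works cleanly here, is that this geodesic factors through the positive element $g_0=\gcd_L(a,b)$, so the path stays inside the monoid at no extra cost. I do not expect a serious obstacle, which is exactly the contrast with the standard generating set $S$ flagged in the text: there the analogue fails to reduce to such a transparent routing and is genuinely subtle.
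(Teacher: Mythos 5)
Your proposal is correct and follows essentially the same route as the paper's proof: factor out $\gcd_L(a,b)$, invoke the result of \cite{Charney1995a} that the word $c_p^{-1}\cdots c_1^{-1}d_1\cdots d_q$ built from the right greedy normal forms is $\calM$-geodesic, and then realize that geodesic as an edge path in $\calG^+$ descending from $a$ to the gcd and ascending to $b$. Your explicit listing of the intermediate vertices and the remark that the subgraph inclusion gives one inequality for free are just slightly more detailed bookkeeping than the paper's version.
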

	
	\begin{proof}
		Let $a,b$ be elements of $\AG^+$.  Then a minimal length edge path in $\G$ from $a$ to $b$ corresponds to a minimal length word in the generating set $\calM$ representing the group element $a^{-1}b$.  We claim that at least one such  path lies entirely in $\AG^+$.  
		
		To see this, let $c = \gcd_L(a,b)$ and write $a=ca', b=cb'$.  Let  $a'=\mu_1\mu_2 \dots \mu_k$ and $b'= \eta_1 \eta_2 \dots \eta_j$ be  the right greedy normal forms.  Then by \cite{Charney1995a}, the word $w=\mu_k^{-1} \dots \mu_1^{-1} \eta_1 \dots \eta_j$ is a minimal length representative for $a^{-1}b = (a')^{-1}(b')$.
		
		Now consider the paths $\gamma_1$ from $e$ to $a'$ and $\gamma_2$ from $e$ to $b'$ given by the words $\mu_1 \dots \mu_k$ and $\eta_1 \dots \eta_j$ respectively.  Both of these paths lie entirely in $\calG^+$ and the path that traverses $\gamma_1$ in reverse followed by $\gamma_2$ is a minimal length edge path from $a'$ to $b'$ in both $\calG^+$ and $\calG$.  Translating this path by $c$ gives a minimal length edge path from $a$ to $b$ which again lies entirely in $\calG^+$.  
	\end{proof}
	
	Recall that a subspace $Y$ of a geodesic metric space $X$ is said to be convex if every geodesic in $X$ with endpoints in $Y$ lies entirely in $Y$.  It is said to be quasi-convex if there exists an $N > 0$ such that every geodesic in $X$ with endpoints in $Y$ lies in the $N$-neighbourhood of $Y$.
	In light of the \Cref{Prop:IsoEmdbedding}, it is reasonable to ask whether $\calG^+$ is convex or at least quasi-convex, in $\calG$.   This turns out to be false in general as the following example demonstrates.  
	
	\begin{exam}
		Let $\AG$ be an Artin group whose defining graph contains a subgraph $\G'$ of the following form (here~$m$ can be any label).  For example, this holds for any braid group with at least four generators.

		\begin{center}
			\begin{tikzpicture}[thick, scale=.7]
			
			\begin{scope}[scale=1.2,shift={(-5,3)}]
			\draw (-2,0) node[]{{$\G'=$}};
			
			\draw[rotate =150] (0,0)--(1,0)--(1/2,0.87)--(0,0);
			
			\draw (0,0) node[vertex,label={above right: $t$}]{};
			\draw[rotate =150] (1,0) node[vertex,label={above left:$s$}]{};
			\draw[rotate =210] (1,0) node[vertex,label={below left: $u$}]{};
			\draw (-1,0) node[left] {$2$};
			\draw (-.25,-.5) node[left] {$2$};
			\draw (-.2,0.5) node[left] {$m$};
			\end{scope}
			\end{tikzpicture}
		\end{center}
		
		Consider the Cayley graph $\calG$ of $\AG$ with respect to the set of minimal elements 
		and the corresponding monoid Cayley graph $\calG^+$.
		We claim that $\calG^+$ is not quasi-convex in $\calG$.  To see this, 	
		we will show that for every~$k\in \N$, there exist  elements~$a,b$ in~$\AG^+$ with a geodesic~$\gamma$ between them lying entirely in~$\calG^+$ and a geodesic $\gamma'$ in~$\calG$ which travels outside the~$k$ neighbourhood of~$\gamma$.
		
		Fix~$k\in \N$ and let~$n=k+1$. Consider the elements~$a=s^n$ and~$b=t^n$. Then the geodesic~$\gamma$ pictured below lies entirely in~$\calG^+$.
		\begin{center}
			\begin{tikzpicture}[scale=0.6]
			\foreach \x in {0,1,2,5,6}
			\draw[fill=black] (\x, \x*.5) circle [radius=0.1](-\x, \x*.5) circle [radius=0.1];
			\foreach \x in {3.2,3.5,3.8}
			\draw[fill=black] (\x, \x*.5) circle [radius=0.02](-\x, \x*.5) circle [radius=0.02];
			\draw (-3,1.5)--(0,0)--(3,1.5) (4,2)--(6,3) (-4,2)--(-6,3);
			\foreach \x in {.5,1.5,2.5,4.5,5.5}
			\draw (\x,\x*.5) node[above,yshift=.1cm] {$\scriptstyle{t}$} (-\x, \x*.5) node[above,yshift=.1cm] {$\scriptstyle{s}$};
			\draw (-6,3) node[above, xshift=-.2cm] {$s^n$} (6,3) node[above,xshift=.35cm] {$t^n$};		
			\draw (-7,2.5) node {$\gamma =$};
			\draw (0,0) node[below, yshift=-.1cm] {$e$};
			\end{tikzpicture}
		\end{center}
		However, since~$\Delta_{s,u}=su$ and ~$\Delta_{t,u}=tu$,~$su, tu \in \calM$. It follows that the path~$\gamma'$ below is also of length~$2n$, and hence a geodesic in~$\calG$. Since~$n=k+1$, this geodesic does not lie in a~$k$-neighbourhood of~$n$. thus, we have constructed the required example.
		
		\begin{center}
			\begin{tikzpicture}[scale=0.6]
			\foreach \x in {0,1,2,5,6}
			\filldraw[color=gray!40] (\x, \x*.5) circle [radius=0.1](-\x, \x*.5) circle [radius=0.1];
			\foreach \x in {3.2,3.5,3.8}
			\draw[gray!40] (\x, \x*.5) circle [radius=0.02](-\x, \x*.5) circle [radius=0.02];
			\draw[gray!40] (-3,1.5)--(0,0)--(3,1.5) (4,2)--(6,3) (-4,2)--(-6,3);
			\foreach \x in {.5,1.5,2.5,4.5,5.5}
			\draw[gray] (\x,\x*.5) node[above,yshift=.1cm] {$\scriptstyle{t}$} (-\x, \x*.5) node[above,yshift=.1cm] {$\scriptstyle{s}$};
			\draw (-6,3) node[above, xshift=-.2cm] {$s^n$} (6,3) node[above, xshift=.35cm] {$t^n$};			
			\draw[gray] (0,0) node[above, yshift=.1cm] {$e$};

			\foreach \x in {0,1,2,5,6}
			\draw[fill=black] (\x, \x-3) circle [radius=0.1](-\x, \x-3) circle [radius=0.1];
			\foreach \x in {3.2,3.5,3.8}
			\draw[fill=black] (\x, \x-3) circle [radius=0.02](-\x, \x-3) circle [radius=0.02];
			\draw (-3,0)--(0,-3)--(3,0) (4,1)--(6,3) (-4,1)--(-6,3);
			\foreach \x in {.5,1.5,2.5,4.5,5.5}
			\draw (\x,\x-3) node[right,xshift=.1cm] {$\scriptstyle{tu}$} (-\x, \x-3) node[left,xshift=-.1cm] {$\scriptstyle{su}$};		
			\draw (-7,0) node {$\gamma' =$} (0,-3) node[below, xshift=0.2cm] {$u^{-n}$};
			\draw[<->] (0,-.1) -- (0,-2.9) node[pos=0.2,right] {\tiny{length$=n$}};
			
			\end{tikzpicture}
		\end{center}
		
	\end{exam}	
	
	{Many interesting questions remain regarding the relationship between both Deligne complexes and Cayley graphs of Artin monoids and their associated Artin groups.  Some of these questions are discussed in the introduction to this paper.}


\bibliographystyle{alpha}
\bibliography{MonoidsBib.bib}		

\newcommand{\etalchar}[1]{$^{#1}$}
\begin{thebibliography}{CCC{\etalchar{+}}97}

\bibitem[Alt98]{Altobelli1998}
Joseph~A. Altobelli.
\newblock {The word problem for Artin groups of FC type}.
\newblock {\em Journal of Pure and Applied Algebra}, 129(1):1--22, 1998.

\bibitem[BH11]{BridsonHaefliger2011}
Martin Bridson and Andre Haeflinger.
\newblock {\em {Metric Spaces Of Non-Positive Curvature}}.
\newblock Springer-Verlag, 2011.

\bibitem[Boyd20]{Boyd2020}
Rachael Boyd.
\newblock Homological stability for {A}rtin monoids.
\newblock {\em Proceedings of the London Mathematical Society},
  121(3):537--583, 2020.

\bibitem[{Bri}71]{Brieskorn1971}
E.~{Brieskorn}.
\newblock {Die Fundamentalgruppe des Raumes der regul{\"a}ren Orbits einer
  endlichen komplexen Spiegelungsgruppe.}
\newblock {\em Inventiones Mathematicae}, 12:57, 1971.

\bibitem[BS72]{BrieskornSaito1972}
E.~Brieskorn and K.~Saito.
\newblock Artin-{G}ruppen und {C}oxeter-{G}ruppen.
\newblock {\em Inventiones Mathematicae}, 17:245--271, 1972.

\bibitem[CCC{\etalchar{+}}97]{BrieskornSaitoTranslation}
C.~Coleman, R.~Corran, J.~Crisp, D.~Easdown, R.~Howlett, D.~Jackson, and
  A.~Ram.
\newblock Artin groups and coxeter groups.
\newblock Translation of Brieskorn, E. and Saito, K. \textit{Artin-{G}ruppen
  und {C}oxeter-{G}ruppen}, 1997.

\bibitem[CD93]{CharneyDavis93}
Ruth Charney and Michael Davis.
\newblock Singular metrics of nonpositive curvature on branched covers of
  {R}iemannian manifolds.
\newblock {\em Amer. J. Math.}, 115(5):929--1009, 1993.

\bibitem[CD95a]{CharneyDavis1995}
Ruth Charney and Michael Davis.
\newblock {The K($\pi$,1)-problem for hyperplane complements associated to
  infinite reflection groups}.
\newblock {\em Journal of the American Mathematical Society}, 8(3):597--627,
  1995.

\bibitem[CD95b]{CharneyDavis1995a}
Ruth Charney and Michael Davis.
\newblock {Finite K($\pi$,1)'s for Artin Groups}.
\newblock {\em Prospects in Topology, ed. by F. Quinn, Annals of Math Study
  138}, 1995.

\bibitem[Cha95]{Charney1995a}
Ruth Charney.
\newblock {Geodesic automation and growth functions for Artin groups of finite
  type}.
\newblock {\em Mathematische Annalen}, 301(1):307--324, 1995.

\bibitem[Cha99]{Charney1999}
Ruth Charney.
\newblock {Injectivity of the Positive Monoid for Some Infinite Type Artin
  Groups}.
\newblock {\em Geometric group theory down under, Proceedings of a Special Year
  in Geometric Group Theory}, page 103:111, 1999.

\bibitem[Cha00]{Charney2000}
Ruth Charney.
\newblock The {T}its conjecture for locally reducible {A}rtin groups.
\newblock {\em Internat. J. Algebra Comput.}, 10(6):783--797, 2000.

\bibitem[Cha07]{Charney2007}
R.~Charney.
\newblock An introduction to right-angled {A}rtin groups.
\newblock {\em Geometriae Dedicata}, 125:141--158, 2007.

\bibitem[CMW19]{CharneyMorrisWright2019}
Ruth Charney and Rose Morris-Wright.
\newblock {Artin groups of infinite type: trivial centers and acylindical
  hyperbolicity}.
\newblock {\em Proceedings of the American Mathematical Society}, 2019.

\bibitem[Cox33]{Coxeter1933}
H.~S.~M. Coxeter.
\newblock The complete enumeration of finite groups of the form
  {$R_i^2=(R_iR_j)^{k_{ij}}=1$}.
\newblock {\em Journal of the London Mathematical Society}, s1-10(1):21--25,
  1933.

\bibitem[Dav08]{Davis2008}
M.~W. Davis.
\newblock {\em The geometry and topology of {C}oxeter groups}, volume~32 of
  {\em London Mathematical Society Monographs Series}.
\newblock Princeton University Press, Princeton, NJ, 2008.

\bibitem[Del72]{Deligne1972}
P.~Deligne.
\newblock Les immeubles des groupes de tresses g\'en\'eralis\'es.
\newblock {\em Inventiones Mathematicae}, 17:273--302, 1972.

\bibitem[Dob06]{Dobrinskaya2006}
N.~{\`E}. Dobrinskaya.
\newblock Configuration spaces of labeled particles and finite
  {E}ilenberg-{M}aclane complexes.
\newblock {\em Proceedings of the Steklov Institute of Mathematics},
  252(1):30--46, Jan 2006.

\bibitem[DP99]{Dehornoy1999}
Patrick Dehornoy and Luis Paris.
\newblock {Gaussian Groups and Garside Groups, Two Generalisations of Artin
  Groups}.
\newblock {\em Proceedings of the London Mathematical Society}, 79(3):569--604,
  1999.

\bibitem[Gar69]{Garside1969}
F.~A. Garside.
\newblock {The braid group and other groups}.
\newblock {\em The Quarterly Journal of Mathematics}, 20(1):235--254, 01 1969.

\bibitem[God07]{Godelle2007}
Eddy Godelle.
\newblock {Artin-Tits groups with CAT (0) Deligne complex}.
\newblock {\em Journal of Pure and Applied Algebra}, 208(1):39--52, 2007.

\bibitem[GP12]{GodelleParis2012a}
Eddy Godelle and Luis Paris.
\newblock {Basic questions on Artin-Tits groups}.
\newblock {\em Configuration Spaces}, pages 299--311, 2012.

\bibitem[Hen85]{Hendriks1985}
Harrie Hendriks.
\newblock Hyperplane complements of large type.
\newblock {\em Invent. Math.}, 79(2):375--381, 1985.

\bibitem[HW08]{Haglund2008}
Fr{\'{e}}d{\'{e}}ric Haglund and Daniel~T. Wise.
\newblock {Special cube complexes}.
\newblock 17(5):1551--1620, 2008.

\bibitem[McD79]{McDuff1979}
Dusa McDuff.
\newblock On the classifying spaces of discrete monoids.
\newblock {\em Topology}, 18(4):313--320, 1979.

\bibitem[Mic99]{Michel1999}
J.~Michel.
\newblock A note on words in braid monoids.
\newblock {\em Journal of Algebra}, 215(1):366--377, 1999.

\bibitem[Ozo17]{Ozornova2017}
V.~Ozornova.
\newblock Discrete morse theory and a reformulation of the k($\pi$,
  1)-conjecture.
\newblock {\em Communications in Algebra}, 45(4):1760--1784, 2017.

\bibitem[Pao17]{Paolini2017}
Giovanni Paolini.
\newblock On the classifying space of {A}rtin monoids.
\newblock {\em Comm. Algebra}, 45(11):4740--4757, 2017.

\bibitem[Par02]{Paris2002}
Luis Paris.
\newblock {Artin monoids inject in their groups}.
\newblock {\em Commentarii Mathematici Helvetici}, 77(3):609--637, 2002.

\bibitem[Par14]{Paris2014}
Luis Paris.
\newblock {K($\pi$,1) conjecture for Artin groups}.
\newblock {\em Annales de la facult{\'{e}} des sciences de Toulouse
  Math{\'{e}}matiques}, 23(2):361--415, 2014.

\bibitem[PS19]{PaoliniSalvetti2018}
Giovanni Paolini and Mario Salvetti.
\newblock Proof of the {$K(\pi,1)$} conjecture for affine {A}rtin groups.
\newblock {\tt arXiv:1907.11795}, 2019.

\bibitem[Sag12]{Sageev2012}
Michah Sageev.
\newblock {CAT(0) cube complexes and groups}.
\newblock {\em Geometric group theory}, (0), 2012.

\bibitem[Sal94]{Salvetti1994}
Mario Salvetti.
\newblock The homotopy type of artin groups.
\newblock {\em Mathematical Research Letters}, 1:565--577, 01 1994.

\bibitem[vdL83]{VanderLek1983}
Harm van~der Lek.
\newblock {The homotopy type of complex hyperplane complements}.
\newblock {\em PhD thesis, Nijmegen}, 1983.

\bibitem[Vin71]{Vinberg1971}
\`E.~B. Vinberg.
\newblock Discrete linear groups that are generated by reflections.
\newblock {\em Izv. Akad. Nauk SSSR Ser. Mat.}, 35:1072--1112, 1971.

\end{thebibliography}

\end{document}